\newtheorem{theorem}{Theorem}[section]
\newtheorem*{theorem'}{Theorem }
\newtheorem{corollary}[theorem]{Corollary}
\newtheorem{proposition}[theorem]{Proposition}
\newtheorem{cor}[theorem]{Corollary}
\newtheorem{remark}[theorem]{Remark}
\newtheorem{lemma}[theorem]{Lemma}
\newtheorem{definition}[theorem]{Definition}
\newcommand{\Nr}{\mathcal N}
\renewcommand{\(}{\left(}
\renewcommand{\)}{\right)}
\renewcommand{\^}[1]{\hat{#1}}
\renewcommand{\~}[1]{\overline{#1}}
\renewcommand{\geq}{\geqslant}
\renewcommand{\leq}{\leqslant}
\newcommand{\<}{\left\langle}
\renewcommand{\>}{\right\rangle}
\newcommand{\8}{\infty}
\renewcommand{\a}{\alpha}
\newcommand{\Aut}{\text{Aut}}
\renewcommand{\Cap}[1]{\underset{#1}{\cap }}
\newcommand{\ch}[1]{\check{#1}}
\newcommand{\eye}{\sphericalangle}
\newcommand{\f}{\varphi}
\newcommand{\fix}{\textnormal{Fix}}
\newcommand{\frakH}{\mathfrak{H}}
\newcommand{\g}{\gamma}
\newcommand{\G}{\Gamma}
\renewcommand{\int}{\varint}
\newcommand{\Lim}[1]{\underset{#1}{\lim}}
\newcommand{\N}{\mathbb{N}}
\newcommand{\Out}{\text{Out}}
\newcommand{\Ps}{{\mathbb{P}}}
\newcommand{\R}{\mathbb{R}}
\newcommand{\Z}{\mathbb{Z}}
\DeclareMathOperator{\Prob}{Prob}
\DeclareMathOperator{\Supp}{Supp}
\newcommand{\bdr}{\partial_{reg}}
\DeclareMathOperator{\Id}{Id}
\newcommand{\CX}{{\mathscr{C}\!X}}
\newcommand{\eps}{\varepsilon}
\title[Contact Graphs, Boundaries, and a CLT]{Contact Graphs, Boundaries, and a Central Limit Theorem for CAT(0) cubical complexes}
\author{Talia Fern\'os, Jean L\'ecureux, Fr\'ed\'eric Math\'eus}
\thanks{\\
J. L\'ecureux was partially supported by Projet ANR-14-CE25-0004 GAMME and ANR-16-
CE40-0022-01 AGIRA\\ T. Fern\'os was partially supported by NSF DMS Grant \#2005640}
\begin{document}
\maketitle

\begin{center}

\textit{In memory of \'Emile Le Page}
\end{center}

\begin{abstract}
Let $X$ be a nonelementary CAT(0) cubical complex. We prove that if $X$ is essential and irreducible,  then the contact graph of $X$ (introduced in \cite{Hagen}) is unbounded and its boundary is homeomorphic to the regular boundary of $X$ (defined in \cite{Fernos}, \cite{KarSageev}). Using this, we  reformulate the Caprace-Sageev's Rank-Rigidity Theorem in terms of the action on the contact graph.
Let $G$ be  a group with a nonelementary action on $X$, and $(Z_n)$ a random walk corresponding to a generating probability measure on $G$ with finite second moment. Using this identification of the boundary of the contact graph, we prove a Central Limit Theorem for $(Z_n)$, namely that
$\frac{d(Z_n o,o)-nA}{\sqrt n}$ converges in law to a non-degenerate Gaussian distribution (where $A=\lim \frac{d(Z_no,o)}{n}$ is the drift of the random walk, and $o\in X$ is an arbitrary basepoint).
\end{abstract}

\tableofcontents

\section{Introduction}

Let $X$ be a finite-dimensional CAT(0) cubical complex, $G$ a discrete countable group acting by cubical automorphisms on $X$. Let $\mu$ be an admissible probability measure on $G$, i.e. a measure such that the semigroup generated by $\Supp(\mu)$ is $G$. We consider the associated random walk. Namely, let $(g_i)_{i\geq 1}$ be a sequence of independant, identically distributed random variables of law $\mu$ and $Z_n=g_1\dots g_n$.

Fix an origin $o\in X$. In our previous paper \cite{FLM}, we studied the behavior of the sequence $\{Z_n.o\}$. We proved in particular that if the action of $G$ on $X$ is nonelementary then $(Z_n.o)$ almost surely converges to some regular point  in the Roller boundary $\eta_{(Z_n)}\in\partial_{reg} X$. 
Furthermore there is a $\lambda>0$ such that $\Lim{n\to\8}\frac{d(Z_no,o)}{n}=\lambda$, where $d$ is the combinatorial distance on $X$.

In this paper, we prove the following Central Limit Theorem:

\begin{theorem}\label{maintheorem}
Assume that the action of $G$ on $X$ is nonelementary.
Assume that $\mu$ has  finite second moment, that is,
$$\int_G (d(g o,o))^2 \mathrm d\mu(g)<+\infty.$$

Then there exists $\sigma>0$ such that $\frac{d(Z_no,o)-n\lambda}{\sqrt n}$ converges in distribution to a centered Gaussian law of variance $\sigma^2$.
\end{theorem}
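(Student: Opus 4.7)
The plan is to realize $d(Z_no,o)$, up to an integrable error, as a Birkhoff sum of a Busemann-type cocycle on the regular boundary $\bdr X$, and then apply a martingale CLT in the spirit of Benoist--Quint. The key input, established earlier in the paper, is that the contact graph $\CX$ is Gromov hyperbolic with boundary canonically identified with $\bdr X$. This lets us import hyperbolic-boundary contraction into the cube-complex setting, even though the combinatorial distance $d$ on $X$ is itself far from hyperbolic.

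For a regular point $\xi\in\bdr X$, each hyperplane of $X$ has a well-defined ``side toward $\xi$''. Define a Busemann-type cocycle
\[\beta(g,\xi)=\#\{\text{hyperplanes separating }o\text{ from }g^{-1}o\text{ oriented toward }\xi\}-\#\{\text{oriented away from }\xi\}.\]
Then $\beta(gh,\xi)=\beta(g,\xi)+\beta(h,g^{-1}\xi)$ and $|\beta(g,\xi)|\leq d(go,o)$, so the finite second moment on $\mu$ transfers to $\beta$. From \cite{FLM} the walk converges almost surely to a regular point $\eta\in\bdr X$ whose law $\nu$ is the unique $\mu$-stationary measure on $\bdr X$. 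The hyperbolicity of $\CX$, combined with the boundary identification, forces almost every hyperplane crossed between $o$ and $Z_no$ to be oriented toward $\eta$; quantitatively, taking $\xi$ distributed as the dual (backward) limit of the walk, the defect $d(Z_no,o)-\beta(Z_n^{-1},\xi)$ remains bounded with good integrability.

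The cocycle identity then rewrites $\beta(Z_n^{-1},\xi)=\sum_{k=1}^{n}\beta(g_k^{-1},g_{k+1}^{-1}\cdots g_n^{-1}\xi)$ as a Birkhoff sum on the natural skew-product over $(G^{\N}\times\bdr X,\mu^{\otimes\N}\otimes\nu)$. Contraction of the shift on $\bdr X=\partial\CX$ (inherited from the hyperbolicity of $\CX$) provides the spectral-gap-type input needed for a Gordin decomposition $F=F_0+\psi\circ T-\psi$ with $F_0$ a square-integrable martingale difference; the martingale CLT then gives convergence of $(d(Z_no,o)-n\lambda)/\sqrt n$ to a centered Gaussian law. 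Nondegeneracy $\sigma^2>0$ is handled separately: a coboundary relation would force $d(Z_no,o)$ to be essentially deterministic, which one rules out using the abundance of loxodromic elements with distinct axes supplied by nonelementariness and by the rank-rigidity reformulation on $\CX$ proved earlier in the paper.

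The main obstacle is precisely the reduction step $d(Z_no,o)=\beta(Z_n^{-1},\xi)+O(1)$ with controlled tails. Because $(X,d)$ is not hyperbolic, the usual alignment and shadow lemmas for geodesics are not available, and one must translate alignment statements from $\CX$ into counts of ``correctly oriented'' hyperplanes in $X$. It is here that the identification $\partial\CX\cong\bdr X$ from the first half of the paper does the decisive work; upgrading qualitative alignment to quantitative $L^{2}$-control of the defect, good enough to feed into a CLT rather than merely a law of large numbers, is the technical heart of the argument.
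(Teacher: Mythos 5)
Your overall architecture is the same as the paper's: the cocycle you call $\beta(g,\xi)$ is exactly the horofunction cocycle $\sigma(g,\xi)=h_\xi(g^{-1}o)$ (for the combinatorial metric the signed hyperplane count \emph{is} the Busemann function), the decomposition $F=F_0+\psi\circ T-\psi$ is the ``centerable cocycle'' decomposition of Benoist--Quint, and the martingale CLT is the right closing step. But there are two genuine gaps.

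First, you never address the case where $X$ is reducible. Your entire mechanism runs through $\partial\CX\cong\bdr X$, but when $X=X_1\times\cdots\times X_D$ with $D\geq 2$ every hyperplane of one factor is transverse to every hyperplane of another, so $\CX$ has bounded diameter, $\partial\CX=\varnothing$, and $\bdr X=\varnothing$; there is no contraction and no boundary to put $\nu$ on. The hypothesis of the theorem is only that the action is nonelementary, so this case must be handled. The paper does so by passing to the finite-index subgroup $G_0$ preserving each irreducible factor, proving a \emph{multidimensional} CLT for the induced measure $\mu_0$ on $G_0$ (one horofunction cocycle per factor, with values in $\R^D$, summed by a linear form), checking via Kaimanovich's lemma that $\mu_0$ retains a finite second moment, and then transferring the Gaussian limit back to $G$ along the return times. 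None of this is in your proposal, and it is not a routine reduction.

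Second, the step you correctly identify as ``the technical heart''---upgrading alignment to quantitative $L^2$ control of the defect, i.e.\ showing $\psi(x)=-2\int(x|y)_o\,d\check\nu(y)$ is finite and bounded---is not actually carried out, and the substitute you gesture at (``contraction of the shift provides the spectral-gap-type input'') is not available under a finite second moment. A spectral gap for the Markov operator is precisely what Le Page, Ledrappier and Bj\"orklund use, and it requires an exponential moment; the whole point of the Benoist--Quint route is to avoid it. What is actually needed is a summable deviation estimate of the form $\Ps\bigl((Z_n x\mid y)_o\geq an\bigr)\leq C_n$ with $\sum C_n<\infty$, uniformly in $x,y\in\overline X$. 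The paper obtains this by combining (i) Maher--Tiozzo's exponential escape estimate on $\CX$, translated via the projection lemma into a linear number of pairwise strongly separated hyperplanes crossed by the walk, with (ii) a purely median-geometric ``box'' lemma asserting that two strongly separated walls separating the relevant medians force four median points to coincide, hence kill the Gromov product. Your qualitative statement that ``almost every hyperplane crossed is oriented toward $\eta$'' is a law-of-large-numbers statement and does not by itself yield the summable tail bounds the martingale CLT requires. Your non-degeneracy sketch is in the right spirit but also needs the quantitative version: a coboundary relation gives $|h_\xi(g^{-1}o)-n\lambda|\leq C$ on $\Supp(\mu^{*n})\times\Supp(\nu)$, and one derives a contradiction by computing the translation length of a single regular element in two ways, not merely by appealing to the abundance of loxodromics.
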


This statement has several predecessors in various contexts, depending on more or less stringent moment conditions. If we restrict ourselves to Central Limit Theorems for random walks on non-abelian groups, the main contributors are:
\begin{itemize}
\item \'Emile ~Le Page, for random walks on linear groups, under an exponential moment condition {\it i.e.} 
$\int_G \exp(\alpha d(g o,o)) \mathrm d\mu(g)<+\infty$ for some positive $\alpha$ \cite{LePage_82}.
\item Stanley ~Sawyer and Tim ~Steger, in the case of a free product of several copies of $\Z/2\Z$ if $\mu$ has first 
moment of order greater than 4 \cite{Sa_Ste_87}.
\item Fran\c cois ~Ledrappier, who adapted Le Page's strategy to random walks on finitely generated free non-abelian groups under an exponential moment condition \cite{Led01}.
\item Michael ~Bj\" orklund, who extended Ledrappier's result to a certain class of hyperbolic groups (still under an exponential moment condition)
\cite{Bjorklund}.
\item Yves ~Benoist and Jean-Fran\c cois ~Quint, who proved a CLT for random walks on linear groups \cite{BQLinear} and on any hyperbolic group 
\cite{BenoistQuintHyp}, in both cases assuming only that $\mu$ has finite moment of order 2.
\item Camille ~Horbez \cite{Horbez}, adapting the proof of Benoist and Quint for mapping class groups and $\Out(F_n)$.
\item Pierre ~Mathieu and Alessandro ~Sisto \cite{MathieuSisto} prove the Central Limit Theorem for acylindrically hyperbolic groups, assuming again an exponential moment.
\item Ilya ~Gekhtman, Samuel J. ~Taylor, and Giulio ~Tiozzo \cite{gekhtman2020central} prove the Central Limit Theorem for the counting measure or for random walks on groups with an automatic structure acting on hyperbolic spaces, and apply this to some nonpositively curved groups.
\end{itemize}

Indeed, Le Page - followed by Ledrappier and Bj\" orklund - proved a stronger result, namely a spectral gap property
for some Markov operator $Q$, and used it to derive a Central Limit Theorem. It is the reason why an exponential moment 
condition on $\mu$ is needed. Using the invertibility of $I-Q$, they proved the existence of a martingale $M_n$ such that
$M_n- d(Z_n o,o)+n\lambda$ is bounded, and applied the Central Limit Theorem for martingales to get the conclusion.
Benoist and Quint do not prove such a spectral gap property. They give an explicit formula for the martingale $M_n$,
by means of some integral that is defined on a suitable boundary. They prove the existence of this integral under only a finite
second moment condition for $\mu$. The proof of \cite{MathieuSisto} for acylindrically hyperbolic groups follows a different strategy: they obtain the Central Limit Theorem from large deviation inequalities. We note that there is some intersection between our results and theirs, as some, but not all, groups acting on CAT(0) cubical complexes are acylindrically hyperbolic (see for example \cite{ChatterjiMartin}). In the case of \cite{gekhtman2020central}, there are technically no overlaps in results since we are examining the (orbit) random walk on the CAT(0) cubical complex, whereas they are considering the random walk on a hyperbolic space (which would be the contact graph in this case). Of course, they also assume the groups are automatic. 

CAT(0) cubical complexes have a functorial construction \cite{Sageev_95, Roller, Chatterji_Niblo, Nica} which makes them appear, together with groups acting on them, quite naturally and often ``in nature". The list of examples is long, and is sure to continue to grow. We mention just a few: the class of right angled Artin groups \cite{CharneyDavis}, Coxeter groups \cite{NibloReeves-Coxeter}, the Higman group with presentation $\< a_i, i\in \Z/4\Z\mid a_{i+1}^2=a_ia_{i+1}a_i^{-1}\>$  \cite{Martin}, random groups of density less than $\frac{5}{24}$\cite{OllivierWise, MackayPrzytycki}, in the square model at density less than $\frac{1}{3}$ \cite{Duong}, fundamental groups of closed hyperbolic 3-manifolds \cite{Agol,KahnMarkovic,WiseMalnormal}, and small cancelation groups \cite{WiseSmall}. 

Among the group actions appearing in this list all are proper, almost all are co-compact, and most  are nonelementary. As we only require that the action is nonelementary, we get an even larger family of groups. For example, our results apply to the nonelementary Bestvina-Brady kernels, such as the original one introduced by Stallings, namely, the kernel of the map $F_2\times F_2\to \Z$ where the standard generators are mapped to $1\in \Z$  \cite{Stallings, BestvinaBrady}, which is clearly nonelementary.

\medskip

Our result adds to the limited list of CAT(0) examples that satisfy the Central Limit Theorem. While Benoist and Quint have both linear 
(hence acting on CAT(0) spaces, such as symmetric spaces)
 and hyperbolic results, it is worth noting that we follow their strategy for the hyperbolic case \cite{BenoistQuintHyp}. In order to do so in our setting, we are first led to study the random walk on the contact graph $\CX$ simultaneously with the random walks on $X$, when $X$ is essential and irreducible. To do so, we employ the celebrated work of Maher and Tiozzo that describes the behavior of random walks on hyperbolic spaces \cite{MaherTiozzo}. The contact graph, introduced by Hagen in \cite{Hagen}, is a hyperbolic graph associated to a CAT(0) cubical complex. We then must consider the case when $X$ is reducible, and in particular, when our acting group $G$ does not preserve each irreducible factor. 

Our study of the contact graph led us to better  understand the  boundary of the contact graph $\CX$. Recall that the \emph{regular boundary} $\bdr X$ is a subset of the Roller boundary and is therefore endowed with the restriction topology (see Definition \ref{def:regular pt}). We show:

\begin{theorem}
Let $X$ be a  finite-dimensional CAT(0) cubical complex. There exists an $\Aut(X)$-equivariant homeomorphism between $\bdr X$ and $\partial \CX$.
\end{theorem}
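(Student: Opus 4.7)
The plan is to build the homeomorphism directly from the shared combinatorial data of hyperplanes and halfspaces, and then to verify well-definedness, bijectivity, and continuity through the geometry of the contact graph.

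First, fix a basepoint $o \in X$ and define a map $\Phi \colon \bdr X \to \partial \CX$ as follows. A point $\xi \in \bdr X$ determines the set $\H(o|\xi)$ of hyperplanes separating $o$ from $\xi$; choose a linear ordering $(h_n)_{n \ge 1}$ of $\H(o|\xi)$ compatible with the halfspace order so that $h_n$ separates $o$ from every $h_m$ with $m > n$. The sequence $(h_n)$ consists of vertices of $\CX$, and the task is to show that regularity of $\xi$ forces it to be unbounded in $\CX$ and to converge to a unique point $\Phi(\xi) \in \partial \CX$. The heuristic is that failures of regularity correspond to the halfspace system degenerating into a product or proper subcomplex direction, in which case many of the $h_n$ become mutually in contact and the sequence stays bounded in $\CX$; regularity rules out this collapse.

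Next, construct an inverse map $\Psi \colon \partial \CX \to \bdr X$. Given $\alpha \in \partial \CX$, pick a geodesic ray $(v_n)_{n \ge 0}$ in $\CX$ starting near $o$ and converging to $\alpha$; each $v_n$ is a hyperplane of $X$ with two associated halfspaces. Select the halfspaces consistently with the direction of the ray to produce a halfspace ultrafilter, and hence a Roller-boundary point $\Psi(\alpha)$. It then remains to verify that (a) this selection is well-defined up to finitely many indices, independently of the choice of ray representing $\alpha$, and (b) the resulting ultrafilter is regular, since a genuine escape in $\CX$ cannot be confined to a bounded subgraph and thus cannot correspond to a non-regular direction.

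The principal obstacle is proving that $\Phi$ and $\Psi$ are mutually inverse and simultaneously continuous for the Roller topology on $\bdr X$ and the Gromov-boundary topology on $\partial \CX$. These topologies are defined by different mechanisms: the Roller topology records agreement of halfspace ultrafilters on finite sets of hyperplanes, while convergence in $\partial \CX$ is measured by fellow-travelling of quasi-geodesic rays. The bridge, which should require the most work, is a quantitative comparison showing that two regular points $\xi, \xi'$ whose halfspace ultrafilters agree on all hyperplanes within combinatorial distance $R$ of $o$ have images $\Phi(\xi), \Phi(\xi')$ lying in $\partial \CX$-neighborhoods of a common point that shrink as $R \to \infty$, and the analogous converse statement. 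This relies both on Hagen's hyperbolicity of $\CX$ and on a careful comparison between the contact relation on hyperplanes and the separation order on halfspaces. Finally, $\Aut(X)$-equivariance is immediate, since both $\bdr X$ and $\partial \CX$ are defined intrinsically from the hyperplane/halfspace structure, which is preserved by cubical automorphisms.
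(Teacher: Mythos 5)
Your overall architecture (a map in each direction built from hyperplane chains, then checks of inverse and continuity) matches the paper's, but the proposal has a genuine gap and one outright error. The error: you cannot linearly order all of $\H(o|\xi)$ so that $h_n$ separates $o$ from every later $h_m$ --- the hyperplanes separating $o$ from $\xi$ are in general not totally ordered by separation, since many of them are transverse to one another. What regularity actually gives you (and what the paper uses, via \cite[Proposition 7.4]{Fernos}) is a descending chain $h_{n+1}\subset h_n$ of \emph{pairwise strongly separated} half-spaces with $\bigcap_n h_n=\{\xi\}$; the map must be defined on such a subchain, not on all separating hyperplanes. Without passing to this subchain, even unboundedness of the image sequence in $\CX$ is unclear, and your ``heuristic'' about non-regular directions collapsing is not an argument.

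The missing idea is the quantitative bridge you correctly identify as the principal obstacle but do not supply. The paper's proof rests on a single lemma: if $h_0\subset\cdots\subset h_N$ are pairwise strongly separated, then $(\^h_n|\^h_m)_{\^h_0}\geq\min(n,m)-3$ and $(\^h_0|\^h_N)_{\^h_k}\leq 3$ for $0<k<N$. This is proved not from hyperbolicity of $\CX$ in the abstract, but by running a hierarchy path (\cite[Proposition 3.1]{BHS1}) between a point of $h_0$ and a point of $h_N^*$ and observing that strong separation forces the crossing indices to be strictly increasing, so $\CX$-distances between the $\^h_n$ are computed up to additive error $2$. The first inequality makes $(\^h_n)$ a Gromov sequence (defining $\Phi$) and gives continuity; the second inequality is what lets you orient the hyperplanes in the inverse direction: given a Gromov sequence $(\^h_n)$ with pairwise $\CX$-distance $\geq 3$ (hence pairwise strongly separated, since any hyperplane transverse to two parallel hyperplanes puts them at distance $\leq 2$), exactly one side of each $\^h_k$ contains infinitely many of the others, because the opposite configuration would force $(\^h_N|\^h_M)_{\^h_k}\leq 3$. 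Your ``select the halfspaces consistently with the direction of the ray'' needs exactly this to be meaningful, and you also need \cite[Lemma 5.12]{FLM} to know that the resulting intersection is a single \emph{regular} point. Without these quantitative estimates the proposal is a plan rather than a proof.
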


\subsection*{Acknowledgments} We thank Anthony Genevois and Mark Hagen for useful discussions concerning  the contact graph and its boundary (\S \ref{sec:comparing}). We would also like to thank the referee for his or her careful review of the paper.

\section{Background}

We recall here some of the basic facts we will need in the sequel. We refer the interested reader to \cite{CapraceSageev}, \cite{Fernos} or \cite{FLM} for further background on CAT(0) cubical complexes.   

We adopt the conventions and notations of \cite{FLM}. In particular, we identify $X$ with its set of vertices, and we endow it with the combinatorial distance $d$, given by the number of hyperplanes separating two vertices. By a geodesic in $X$, we always mean a combinatorial geodesic, i.e. a path between two vertices which is geodesic for the combinatorial distance $d$. Throughout the text, all CAT(0) cubical complexes will be assumed to be finite-dimensional. We shall often refer to a basepoint $o\in X$ which is arbitrary, unless otherwise specified in some local context.

We shall denote by $\Aut(X)$ the group of cubical automorphisms of $X$, that is the bijective self maps of $X$ that preserve the given cubical structure of $X$. 

 We denote the set of half-spaces by $\frakH$, and the set of hyperplanes by $\^\frakH$. There is a natural involution $\frakH\to \frakH$ that takes a half-space to its complement. We shall use this often and denote the assignment $h\mapsto h^*$. Similarly, the hyperplane associatedt to $h$ is denoted $\^h$ and recall that $\^h^* = \^h$. Thanks to Sageev's Duality \cite{Sageev_95}, \cite{Nica}, \cite{ChatterjiNiblo}, there is a continuous injection of $X$ into $2^{\frakH}$: a vertex $x$ corresponds to the set $U_x$ of half-spaces containing $x$.
The \emph{Roller compactification} $\overline X$ is the closure of $X$ in $2^{\frakH}$.  The \emph{Roller boundary} is $\partial X=\overline X\setminus X$.  We extend naturally the notation $U_x$ to the boundary, by defining $U_x=x$ for $x\in\partial X$.

\begin{remark}
We note that $\partial X$ is only closed when $X$ is locally compact. 
 Indeed, observe that $X$ is locally compact if and only if it is locally finite. Therefore, if $X$ is not locally compact then  there is a vertex $x_0\in X$ with infinitely many edges emanating from it. Since $X$ is finite dimensional, there are infinitely many of these whose dual hyperplanes are not transverse. Let $\{h_n\}_{n\in \N}$ be an enumeration of the associated half-spaces which do not contain $x_0$, i.e $x_0\in \Cap{n}h_n^*$. Since $X$ is essential, we have that all half-spaces are deep, and in  particular, there exists $\xi_n\in h_n\cap \partial X$. Furthermore, since $\~X$ is compact, up to passing to a subsequence, we may assume that $\xi_n\to x_1\in \~X$. We now show that $x_1=x_0$. To this end, it is sufficient to let $h$ be a half-space dual to an edge emanating from $x_0$ with $x_0\in h^*$ and prove that $x_1\in h^*$ as well. But for each $n$ we must have that $h\cap h_n=\varnothing$ or $h\pitchfork h_n$. Using finite dimensionality again, there is and $N$ so that for all $n>N$ we must have $h\cap h_n=\varnothing$. Therefore, for all $n>N$ we have that $\xi_n \in h^*$ and hence $x_1\in h^*$. This completes the proof.  
 \end{remark}
 
Sageev's duality then also extends to Sageev-Roller Duality \cite{Roller} and this also allows for the extension of the combinatorial metric: for $x,y \in \~ X$ let $d(x,y) =\frac{1}{2} \#(U_x\triangle U_y) \in [0,\8]$.

\begin{definition}
The \emph{median} of $x,y,z\in\overline X$ is the point $m(x,y,z)$ defined by the formula
$$U_{m(x,y,z)}=(U_x\cap U_y)\cup (U_y\cap U_z)\cup (U_z\cap U_x)$$
\end{definition}

Equivalently, define $I(x,y)$ as the intersection of all half-spaces containing both $x$ and $y$. Then the median point $m(x,y,z)$ is the unique point in $I(x,y)\cap I(x,z)\cap I(y,z)$.

\begin{remark}\label{Rem: MedianAxioms}
 The median satisfies some nice topological and algebraic properties. 
 \begin{itemize}
\item The median as a map $m: \~X^3 \to  \~X$ is continuous \cite[Lemma 6.21]{FLM}
\item The median is invariant under permutation of its three variables.
\item $m(x,x,y) = x$ for all $x,y \in \~X$.
\item For all $a,b,x,y,z\in \~X$ we have that 
$$m(a,b, m(x,y,z))=m(m(a,b,x), m(a,b,y), z).$$
\end{itemize}
\end{remark}

While the last 3 properties are well known for the median as a map on $X^3$, the extension of these to ${\~X}^3$ follows from continuity, or are also easily verified directly.
Also, though we will not use this, it is perhaps worth giving an idea of the meaning of the last property. If $x,y \in \~X$ then the median map yields a CAT(0) cubical morphism $\f_{(x,y)}: \~X\to I(x,y)$ given by $\f_{(x,y)}(z) := m(x,y,z)$. Therefore, the last property can be seen as a algebraic morphism property on such pairs. Namely, 

$$ \f_{(a,b)} \circ \f_{(x,y)} = \f_{ \( m(a,b,x),m(a,b,y)\) }
$$

%
%
%

\begin{definition}
Let $h,k \in \frakH$ be half-spaces, and $\^h,\^k$ their respective boundary hyperplanes. We say that $h$ and $k$ are
\begin{itemize}
\item \emph{transverse} if the four intersections $h\cap k, h^*\cap k, h\cap k^*, h^*\cap k^*$ are nonempty. In this case, we write $h\pitchfork k$;
\item \emph{parallel} if they are not transverse, or equivalently if up to possibly reversing one or both orientations they are nested, i.e. we have that $h\subseteq k$;

\item \emph{strongly separated} if they are parallel and no half-space is transverse to both;
\item \emph{tightly nested} if up to reversing orientations, $h\subsetneq k$ and no other half-space $\ell$ is properly between, i.e. if $h\subseteq \ell \subseteq k$ then $\ell =h$ or $k$. 
\item These notions do not depend on the choice of the orientation, and therefore we say that $\^h$ and $\^k$ are parallel, transverse, strongly separated or tightly nested if any choice of orientation satisfies these properties.
\end{itemize}
\end{definition}

\begin{definition}
Let $X$ be a finite dimensional CAT(0) cubical complex. The action $G\to \Aut(X)$ is said to be:
\begin{itemize}
\item \emph{nonelementary} if there is no finite $G$-orbit in $X\cup \partial_\eye X$, where $\partial_\eye X$ is the CAT(0) visual boundary of $X$;
\item \emph{Roller-nonelementary} if there is no finite $G$-orbit in the Roller compactification $\~X$;
\item \emph{essential} if for every $h\in \frakH$ there is a $g\in G$ such that $gh\subset h$.
\end{itemize}
\end{definition}

\begin{remark}\label{rem: basic action defs}
We take a moment to say more about the above definitions.
 \begin{enumerate}
\item We note that momentarily thinking of $X$ as the full CAT(0) cubical complex with its CAT(0) metric, then there is a finite $G$-orbit in $X$ if and only if there is a fixed point in $X$.
\item We shall later need the fact that if an action is nonelementary, then it is Roller nonelementary \cite[Proposition 2.26]{CFI}.
\item It is worth noting that this is not in fact Caprace and Sageev's definition of an essential action. However, it is equivalent for nonelementary actions thanks to their Double Skewering Lemma. Since we shall always make the nonelementary assumption, we provide this simpler definition.
\end{enumerate}

\end{remark}

The notion of strongly separated hyperplanes was introduced by Behrstock and Charney \cite{Behrstock_Charney}. 
In \cite{CapraceSageev} Caprace and Sageev provide several important properties which have been crucial in recent developments in the theory of CAT(0) cubical complexes, in particular, their irreducibility criterion.

\begin{theorem}[\cite{CapraceSageev}]\label{Th: CapraceSageev Fundamentals} Let $X$ be a finite dimensional CAT(0) cubical complex with $\Aut(X)$ (or equivalently the existence of a subgroup) acting essentially and nonelementarily. 
 \begin{enumerate}
\item (\emph{Irreducibility Criterion}) $X$ is irreducible if and only if there exist $h,k \in \frakH$ that are strongly separated. 
\item If $X$ is not reducible, then it admits a unique (up to permutation) irreducible decomposition $X \cong X_1\times \cdots \times X_D$ such that $\Aut (X)$ contains an isomorphic copy of $\Aut (X_1)\times \cdots \times \Aut (X_D)$ as a finite index subgroup. Permutation of factors and their cosets are the only automorphisms missing from this product decomposition. 
\end{enumerate}

\end{theorem}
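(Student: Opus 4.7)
The plan is to handle parts (1) and (2) in sequence, with the hard direction of part (1) carrying the combinatorial load.

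\textbf{Easy direction of (1).} If $X \cong X_1 \times X_2$ with both factors nontrivial, every hyperplane of $X$ has one of two forms: $\hat h_1 \times X_2$ (a ``type-$1$'' hyperplane) or $X_1 \times \hat h_2$ (``type-$2$''). Any type-$1$ hyperplane is transverse to any type-$2$ hyperplane, so parallel pairs can only occur within a single type; and any such parallel pair is witnessed against strong separation by every hyperplane of the opposite type (using essentiality/nonelementarity to ensure the opposite type is nonempty). Thus no strongly separated pair of half-spaces exists in $X$.

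\textbf{Hard direction of (1), with the decomposition in (2).} Assume no strongly separated pair of half-spaces exists. The goal is to produce a nontrivial partition $\frakH = \frakH_1 \sqcup \cdots \sqcup \frakH_D$ (with $D \geq 2$) such that every $h_i \in \frakH_i$ is transverse to every $h_j \in \frakH_j$ for $i \neq j$; by Sageev cubulation of each $\frakH_i$, this yields $X \cong X_1 \times \cdots \times X_D$. Following Caprace--Sageev, I would define an $\Aut(X)$-invariant equivalence relation $\sim$ on hyperplanes that encodes ``belonging to the same irreducible factor,'' and verify: (i) the relation is well-defined and $\Aut(X)$-invariant; (ii) under the no-strongly-separated hypothesis, combined with essentiality and nonelementarity, the relation has at least two classes, so $D \geq 2$; (iii) the induced product map $(\pi_1, \ldots, \pi_D) : X \to X_1 \times \cdots \times X_D$, where each $\pi_i$ is built from medians against $\frakH_i$, is a cubical isomorphism. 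Iterating on each irreducible piece (finite dimension bounds the depth of the recursion) yields a fully irreducible decomposition.

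\textbf{Part (2).} Since $\sim$ is defined canonically from the hyperplane combinatorics of $X$, the decomposition is unique up to permutation of factors, and every $\phi \in \Aut(X)$ permutes the classes $\frakH_1, \ldots, \frakH_D$. This yields a homomorphism $\Aut(X) \to \mathrm{Sym}(D)$ whose image lies in the (finite) subgroup permuting only isomorphic factors. The kernel consists of automorphisms preserving each $\frakH_i$ setwise, and by the product description this kernel equals $\Aut(X_1) \times \cdots \times \Aut(X_D)$, giving the claimed finite-index embedding.

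\textbf{Main obstacle.} The hardest step is (iii) above: verifying that the projections $\pi_i$ assemble into a cubical isomorphism. ``No strongly separated pair'' is a combinatorially weak hypothesis, and extracting a genuine orthogonal product structure requires careful use of the Double Skewering Lemma together with essentiality and nonelementarity, both to populate each $\frakH_i$ with enough half-spaces and to ensure cross-transversality persists globally. Getting the equivalence relation right -- so that its classes really correspond to orthogonal factors rather than merely to transversality components -- is the technical heart of the argument.
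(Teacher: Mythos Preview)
The paper does not prove this theorem: it is quoted as background from Caprace--Sageev \cite{CapraceSageev} and no proof is given in the present text. There is therefore nothing here to compare your proposal against.

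For what it is worth, your outline is faithful to the actual Caprace--Sageev argument: the easy direction of (1) is exactly as you say, and for the hard direction they do build an $\Aut(X)$-invariant equivalence relation on hyperplanes (roughly, the transitive closure of ``not strongly separated'') whose classes are pairwise transverse, then cubulate each class. Your identification of the main obstacle---showing that the classes are genuinely pairwise transverse and that the resulting product map is a cubical isomorphism, using essentiality and the Double Skewering / Flipping lemmas---is accurate. Part (2) also goes as you describe. But all of this lives in \cite{CapraceSageev}, not in the paper under review.
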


\begin{definition}\label{def:regular pt}
A point $\xi \in \partial X$ is called \emph{regular} if, for every $h,k\in U_\xi$, there exists $l\in U_\xi$ such that $l\subset h\cap k$ and $l$ is strongly separated from both $h$ and $k$.

Equivalently \cite[Proposition 7.5]{Fernos}, the point $\xi$ is regular if and only if there exists $h_{n+1}\subset h_n$, an infinite decreasing chain of pairwise strongly separated half-spaces such that $\Cap{n\in \N}h_n=\{\xi\}$.
\end{definition}

\begin{remark}
In \cite{Fernos} the regular boundary of a product is defined to be the product of the regular boundaries of each irreducible factor. Here in order to make the statement of Theorem \ref{thm:bdrCX} simple we prefer to keep the same definition as above in all cases, so that the regular boundary of a product is empty.
\end{remark}

\begin{definition}
 An element $g\in \Aut (X)$ is said to be \emph{regular rank-1} if  there exists $n\in \N$ and $h\in \frakH$ so that $g^n h \subset h$ are strongly separated.
\end{definition}

\begin{definition}
The \emph{translation length} of an isometry $g$ on a metric space $\mathcal Z$ is defined by fixing $x\in \mathcal Z$ and looking at 
$$\ell(g)=\lim_n \frac{d(x,g^nx)}{n}.$$
Furthermore, we say $g$ is
\begin{itemize}
\item   \emph{loxodromic} if $\ell(g)>0$ and
\item \emph{elliptic} if some (or equivalently every) $\<g\>$-orbit is bounded, in which case $\ell(g)=0$. 
\end{itemize}

\end{definition}

\begin{remark} A few things worth noting here:
\begin{itemize}
  \item It is straightforward to verify that this definition does not depend on the choice of $x\in \mathcal Z$. This translation length is sometimes called the stable translation length to distinguish it from:
 $$\ell'(g)=\inf_{x\in \mathcal Z} d(x,gx).$$
  When $\mathcal Z=X$ is a CAT(0) cubical complex endowed with the combinatorial metric, Haglund showed that if $\ell'(g)>0$ then $g$ has a combinatorial axis on which it acts as a translation by $\ell'(g)$ and  that  $\ell'(g^n) = n\ell'(g)$ for every $n\in \N$. \cite[Corollary 6.2]{Haglund}. Conversely, if $\ell'(g)=0$ then there is a $\<g\>$-fixed vertex \cite[Theorem 6.3]{Haglund}\footnote{Actually, this is not quite correct: an automorphism may not act ``stably without inversions". However, up to passing to the the cubical subdivision of $X$ the result is correct as stated and $\ell(g) = \ell'(g)$. Furthermore, passing to the cubical subdivision will not impact any of our results in this paper.}. 
\item For $X$ a finite dimensional CAT(0) cubical complex, an element $g\in \Aut (X)$ is loxodromic  if and only if there exists $h\in \frakH$ and $n\in\Z$ so that $g^nh \subset h$ (see for example \cite[Remark 3.2]{FFT}). 
\end{itemize}
\end{remark}

Recall that in a metric space $X$ the \emph{Gromov product} is defined, for every $x,y\in X$, as $(x|y)_o=\frac{1}{2}(d(x,o)+d(y,o)-d(x,y))$. In a CAT(0) cubical complex it is easy to check that this coincides with $d(o, m(x,y,o))$, where $m(x,y,o)$ is the median of $x,y$ and $o$. This allows us to extend the Gromov product to the boundary:

\begin{definition}
The \emph{Gromov product} based at $o\in X$ between $x,y\in\overline X$ is defined by
$$(x|y)_o = d(o, m)$$
where $m$ is the median of $x,y$ and $o$. (If $m\in \partial X$ then $d(o,m)= \8$).
\end{definition}

 By continuity of the median  the Gromov product is a continuous function on $\overline X\times\overline X$.

\begin{definition}
Fix $o\in X$. For each $x\in \overline X$ define the \emph{horofunction} based at $o$ with respect to $x$ to be the map $h_x: X\to \R$ as $h_x(a) = d(o,a)-2(a|x)_o$. 
\end{definition}

\begin{remark}

Note that if $x\in X$ then $h_x(a) = d(x,a)-d(x,o)$. Furthermore, the map $(x,a)\mapsto h_x(a)$ is continuous on $\overline X\times X$. It follows that if $(x_n)_{n\in\N}\in X^\N$ is a sequence with $\lim x_n=x$, then the horofunction associated to $x$ is the pointwise limit of the functions $h_{x_n}(y):= d(x_n,y)-d(x_n,o)$, which is the more usual definition of a horofunction.


\end{remark}

\begin{remark}\label{rem:Gproduct} 
For every $a,z\in X$, $x,y\in\overline X$, by applying the definitions and taking limits where necessary we have:
\begin{itemize}
 \item[]$h_x(g^{-1}o) = -2(x|g^{-1}y)_o+2(gx|y)_o+h_y(go)$
 \item[] $h_x(a) = d(a,m(a,x,o)) - d(o,m(a,x,o))$
  \item[] $2(a|x)_z = d(a,z)+h_x(z)-h_x(a)$.
\end{itemize}

\end{remark}

\begin{definition}
Let $G$ be a group acting by permutations on the set $Z$, and $E$ an abelian group. A map $\sigma:G\times Z\to E$ is said to be a \emph{cocycle} if 
 $$\sigma(gg',x)=\sigma(g,g'x)+\sigma(g',x).$$
\end{definition}

\begin{remark}\label{rem: horo is cocycle}
 By an elementary calculation (see \cite[Lemma 6.22]{FLM}), the map $\sigma:G\times \overline X\to \mathbb R$ defined by $\sigma(g,x)=h_x(g^{-1}o)$ is a continuous cocycle.
\end{remark}

 \section{Contact graph} 
 
 \subsection{Basic definitions and properties}
 
 The contact graph was introduced by Hagen \cite{Hagen}. Since then it has proved quite useful in the study of nonpositively curved spaces. For example, it was fundamental in the development of hierarchically hyperbolic spaces and groups \cite{BHS1,BHS2}. We first recall a few basic facts about this graph. 
 
 \begin{definition}
 The \emph{contact graph} of $X$, denoted by $\CX$, is the graph whose vertices are hyperplanes of $X$, and an edge connects two hyperplanes if either they are tightly nested or transverse.
 \end{definition}

 We denote $d_{\CX}$ the distance function in the contact graph.
 
 \begin{remark}\label{rem:3->ss}
 If $\^h$ and $\^k$ are two hyperplanes such that $d_{\CX}(\^h,\^k)\geq 3$ then $\^h$ and $\^k$ are strongly separated. 
 \end{remark}
 
Indeed if $h$ and $k$ are distinct, parallel, and not strongly separated then there exists a hyperplane $\ell$ transverse to both, meaning that $d_{\CX}(\^h, \ell)=1$ and $d_{\CX}(\^k, \ell)=1$ so that $d_{\CX}(\^h, \^k)\leq2$.

 The following is the key fact that we shall need about the contact graph.
 
 \begin{theorem}[\cite{Hagen}]
 The contact graph $\CX$ is quasi-isometric to a tree and in particular, it is hyperbolic.
 \end{theorem}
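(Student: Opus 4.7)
My plan is to verify Manning's bottleneck criterion for quasi-isometry to a tree: find a uniform constant $\Delta>0$ so that for any hyperplanes $\^a,\^b$ and any geodesic $\sigma$ from $\^a$ to $\^b$ in $\CX$, every path from $\^a$ to $\^b$ meets the $\Delta$-neighborhood of every vertex on $\sigma$. Hyperbolicity then follows from quasi-isometry invariance, since trees are $0$-hyperbolic.

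The main tool is a wall-crossing lemma: given any hyperplane $\^m$ of $X$ and any $\CX$-path $\ell_0,\ldots,\ell_L$, if $\ell_0$ lies strictly in one half-space of $\^m$ and $\ell_L$ lies strictly in the other, then some $\ell_j$ is either equal to $\^m$ or transverse to $\^m$. This is proved by a one-step analysis: if $\ell_j$ lies strictly in $m$ and $\ell_{j+1}$ strictly in $m^*$, then $\^m$ separates $\ell_j$ from $\ell_{j+1}$, which is impossible under tight nesting (since $\^m$ would be a hyperplane strictly between them, contradicting the definition) and impossible under transversality (since $\ell_j \cap \ell_{j+1}$ is a nonempty subset of $m$ that lies on $\ell_{j+1}$, forcing a point of $\ell_{j+1}$ into $m$).

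Next, fix a $\CX$-geodesic $\sigma=\^h_0=\^a,\^h_1,\ldots,\^h_n=\^b$ and a vertex $\^m=\^h_k$ with $3\leq k\leq n-3$. By Remark \ref{rem:3->ss}, $\^m$ is strongly separated from both $\^a$ and $\^b$, so neither is transverse to $\^m$ and each sits strictly in some half-space of $\^m$. The crucial claim is that $\^a$ and $\^b$ lie in opposite half-spaces. I would prove this using the geodesicity of $\sigma$: if they lay on the same side of $\^m$, one could use combinatorial geodesics in $X$ entirely contained in that single half-space to produce a $\CX$-walk from $\^a$ to $\^b$ avoiding the $\CX$-neighborhood of $\^m$, and that is shorter than $\sigma$, contradicting that $\sigma$ has length $n$ and passes through $\^m$. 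With opposite-sidedness in hand, the wall-crossing lemma applied to any $\CX$-path from $\^a$ to $\^b$ yields a hyperplane at $\CX$-distance at most $1$ from $\^m$. Combined with the endpoint buffers of $3$, this gives a bottleneck constant $\Delta\leq 4$.

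The main obstacle is the separation step: the $\^h_j$ for $j$ close to $k$ may be transverse to $\^m$ and thus have no well-defined side, so a naive induction along $\sigma$ does not immediately deliver opposite-sidedness of the endpoints. Resolving this will likely require either lifting $\sigma$ to a combinatorial geodesic in $X$ and tracking how it crosses the carrier of $\^m$, or exploiting the stronger separation of the pair $(\^h_{k-3},\^h_{k+3})$ (which are strongly separated from each other and from $\^m$) together with the geodesicity of $\sigma$ to force $\^a$ and $\^b$ onto opposite sides of $\^m$.
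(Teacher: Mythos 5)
You should first note that the paper does not prove this statement at all: it is quoted from Hagen's work, so there is no internal proof to measure your attempt against. Judged on its own terms, your overall strategy (Manning's bottleneck criterion, plus a wall-crossing lemma saying that any $\CX$-path whose endpoints lie in opposite open half-spaces of $\^m$ must contain a vertex equal or transverse to $\^m$) is sound and is genuinely close in spirit to Hagen's original argument; the wall-crossing lemma itself is correct, since two hyperplanes lying in $m$ and $m^*$ respectively are disjoint (hence not transverse) and have $\^m$ properly between them (hence are not tightly nested).

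The problem is that the entire weight of the theorem rests on the separation claim — that the endpoints of a $\CX$-geodesic through $\^h_k$, with buffers of $3$ on each side, lie in opposite half-spaces of $\^h_k$ — and you have not proved it; you have only named two strategies that "will likely" work. Neither sketch closes the gap as stated. If $\^a,\^b\subseteq m^*$, the convexity of $m^*$ does give you a combinatorial geodesic (indeed a hierarchy path, Proposition \ref{prop:hierarchy}) from $\Nr(\^a)$ to $\Nr(\^b)$ contained in $m^*$, but the hyperplanes it crosses may perfectly well be transverse to $\^m$ (transversality can be witnessed by squares far from the path), so its projection to $\CX$ need not avoid any neighborhood of $\^m$; and in any case "shorter than $\sigma$" is not a contradiction one can reach, since $\sigma$ is a geodesic and no path is shorter than it — the contradiction would have to come from producing a path of the \emph{same} length missing $\^m$ and then arguing separately, which you do not do. This separation statement is essentially equivalent to the theorem itself (it is what Hagen's disc-diagram/hexagon-move analysis establishes), so as written the proposal assumes the crux rather than proving it.
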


Recall that a \emph{clique} of a graph is a complete subgraph. It is possible to define a type of projection from $X$ to cliques in $\CX$, as follows: 

To a vertex $x\in X$, one can associate the set $\pi(x)$ of hyperplanes which are adjacent to $x$. By definition, any two hyperplanes adjacent to $x$ are adjacent in the contact graph, so that $\pi(x)$ is a clique of $\CX$. This defines an $\Aut(X)$-equivariant map from $X$ to the set of cliques of $\CX$.

We define the distance $d_{\CX}(C,C')$ between two cliques $C$ and $C'$ as the minimum of $d_{\CX}(x,x')$ where $x\in C$ and $x\in C'$.

If $\gamma$ is a (combinatorial) geodesic in $X$, one can define $\pi(\gamma)$ as the path in $\CX$ formed by all hyperplanes crossed by $\gamma$.  In general it may or may not be a geodesic in $\CX$. But it is always a path starting at a vertex in $\pi(x)$ and ending at a vertex in $\pi(y)$. We note that we have defined two maps with the same symbol $\pi$ with different domains; one is $X$ while the other is ``geoedesics in $X$". We hope that this slight abuse of notation will not cause any confusion.   We deduce easily, using Remark \ref{rem:3->ss}, the following Lemma:

\begin{lemma}\label{lem:projections}
Let $A\geq 3$. For any $x, y\in X$ we have $d_{\CX}(\pi(x),\pi(y))\leq d(x,y)$. Furthermore, let $\g$ be a  combinatorial geodesic segment  $\g$  connecting $x$ and $y$. 
If
   $d_{\CX}(\pi(x),\pi(y))\geq A$ then $\gamma$ crosses at least $\left\lfloor A/3\right\rfloor$-many pairwise strongly separated hyperplanes. 
\end{lemma}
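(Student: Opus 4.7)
The plan is to use the fact (stated just before the lemma) that each $\pi(v)$ is a clique in $\CX$, together with Remark~\ref{rem:3->ss}.

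For the first inequality, I would fix a combinatorial geodesic $x=x_0,x_1,\dots,x_n=y$ of length $n=d(x,y)$ and let $\^h_i$ be the hyperplane dual to the edge $\{x_{i-1},x_i\}$. Consecutive hyperplanes $\^h_i,\^h_{i+1}$ both lie in the clique $\pi(x_i)$, so they are adjacent in $\CX$. Thus $\^h_1,\dots,\^h_n$ is a path of length $n-1$ in $\CX$ from a vertex of $\pi(x)$ to a vertex of $\pi(y)$, giving $d_{\CX}(\pi(x),\pi(y))\leq n-1\leq d(x,y)$.

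For the second part, I would introduce the function $f\colon\{1,\dots,n\}\to\N$ defined by $f(i):=d_{\CX}(\^h_i,\pi(x))$. Then $f$ is $1$-Lipschitz (since consecutive $\^h_i,\^h_{i+1}$ are adjacent in $\CX$), with $f(1)=0$ and $f(n)\geq d_{\CX}(\pi(x),\pi(y))\geq A$. Since $f$ takes integer values, the discrete intermediate value property supplies, for each integer $j\in\{0,1,\dots,\lfloor A\rfloor\}$, some index $i_j$ with $f(i_j)=j$. Restricting to $j=0,3,6,\dots,3\lfloor A/3\rfloor$, the triangle inequality yields
\[
d_{\CX}(\^h_{i_{3a}},\^h_{i_{3b}})\geq |f(i_{3a})-f(i_{3b})|=3|a-b|\geq 3\quad\text{whenever }a\neq b,
\]
so by Remark~\ref{rem:3->ss} the $\lfloor A/3\rfloor+1$ hyperplanes $\^h_{i_{0}},\^h_{i_{3}},\dots,\^h_{i_{3\lfloor A/3\rfloor}}$ are pairwise strongly separated, and by construction all of them are crossed by $\gamma$.

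I do not anticipate any genuine technical obstacle here: the essential content is already packaged into Remark~\ref{rem:3->ss} and the observation that $\pi$ lands in cliques, and the rest is a standard extraction of a subsequence along multiples of $3$ from the coarsely surjective, integer-valued, $1$-Lipschitz function $f$.
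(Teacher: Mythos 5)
Your argument is correct and follows essentially the same route as the paper: project the geodesic $\gamma$ to the path of dual hyperplanes in $\CX$, bound $d_{\CX}(\pi(x),\pi(y))$ by the length of that path, and extract vertices at pairwise $\CX$-distance at least $3$ so that Remark~\ref{rem:3->ss} yields strong separation. Your $1$-Lipschitz function $f$ merely makes explicit the extraction step that the paper leaves implicit.
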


\begin{proof}
Since $\pi(\g)$ is a path from $\pi(x)$ to $\pi(y)$ of length $d(x,y)$ we have $d_{\CX}(\pi(x),\pi(y))\leq d(x,y)$. If $d_{\CX}(\pi(x),\pi(y))\geq A$ then $\pi(\g)$ is of length at least $A$, so there are at least $\left\lfloor\dfrac{A}{3}\right\rfloor$-many vertices of $\pi(\g)$ which are at pairwise distance at least $3$; by  Remark \ref{rem:3->ss} they are strongly separated. 
\end{proof}

Recall that if $\^h$ is a hyperplane, then $\Nr(\^h)$ denotes the set of all vertices of $X$ which are adjacent to $\^h$ and is a CAT(0) cubical complex in its own right. If $h$ is a half-space, we also denote $\Nr(h)=\Nr(\^h)$.  

We will make extensive use of the construction given by \cite[Proposition 3.1]{BHS1}, which states the following.

\begin{proposition}\label{prop:hierarchy}
Let $x,y\in X$. Then there exists a combinatorial geodesic $\gamma$ from $x$ to $y$, which is obtained as a concatenation of geodesics $\gamma_0\gamma_1\cdots \gamma_K$ such that there exist hyperplanes $\^h_i$ with $\gamma_i\subset  \Nr(\^h_i)$, and the sequence $(\^h_0,\dots,\^h_K)$ is a geodesic in $\CX$.
\end{proposition}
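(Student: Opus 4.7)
The plan is to proceed by induction on $K := d_{\CX}(\pi(x),\pi(y))$, which is finite (and in fact bounded by $d(x,y)$) by Lemma \ref{lem:projections}. Fix at the outset a $\CX$-geodesic $(\^h_0,\ldots,\^h_K)$ with $\^h_0\in\pi(x)$ and $\^h_K\in\pi(y)$; the task is to realize it as a combinatorial geodesic decomposed along the carriers $\Nr(\^h_i)$.

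For the base case $K=0$, both $x$ and $y$ are adjacent to $\^h_0$, so both lie in the carrier $\Nr(\^h_0)$. Since carriers of hyperplanes are convex subcomplexes of $X$, the interval $I(x,y)$ is contained in $\Nr(\^h_0)$, and any combinatorial geodesic from $x$ to $y$ provides the desired $\gamma_0$.

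For the inductive step, the idea is to peel off the first carrier. Let $z := \mathrm{gate}_{\Nr(\^h_0)}(y)$ denote the median (gate) projection of $y$ onto the convex carrier $\Nr(\^h_0)$. Since $x\in \Nr(\^h_0)$, the standard gate property gives $d(x,y)=d(x,z)+d(z,y)$; choose $\gamma_0$ to be any combinatorial geodesic from $x$ to $z$, which may be taken inside the convex set $\Nr(\^h_0)$. The key intermediate claim is that $\^h_1\in\pi(z)$ and that $(\^h_1,\ldots,\^h_K)$ is still a $\CX$-geodesic, now from $\pi(z)$ to $\pi(y)$. Granting this, induction applied to the pair $(z,y)$ supplies a decomposition $\gamma_1\cdots\gamma_K$ of a combinatorial geodesic from $z$ to $y$ with $\gamma_i\subset\Nr(\^h_i)$, and the concatenation $\gamma_0\gamma_1\cdots\gamma_K$ has length $d(x,z)+d(z,y)=d(x,y)$, so it is automatically a combinatorial geodesic; this is the desired hierarchy path.

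The main obstacle is proving the intermediate claim, and specifically that the gate $z$ is adjacent to $\^h_1$. One treats separately the two possibilities for a consecutive pair on a $\CX$-geodesic: either $\^h_0$ and $\^h_1$ are transverse, or they are tightly nested. In the transverse case, $\Nr(\^h_0)\cap\Nr(\^h_1)$ contains the square dual to $\{\^h_0,\^h_1\}$, and one argues that the gate $z$ may be taken in this intersection (if necessary by rechoosing the $\CX$-geodesic among all those starting from $\pi(x)$). In the tightly nested case, the absence of any half-space strictly between $h_0$ and $h_1$ forces any geodesic from $y$ that terminates in $\Nr(\^h_0)$ to use an edge dual to $\^h_1$ upon first entering the carrier, so again $z\in\Nr(\^h_1)$. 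Finally, $(\^h_1,\ldots,\^h_K)$ remains a $\CX$-geodesic, since $d_{\CX}(\pi(z),\pi(y))\le K-1$ by construction, and any strict inequality would contradict the minimality of the original $\CX$-geodesic from $\pi(x)$ to $\pi(y)$.
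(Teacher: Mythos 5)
The paper does not prove this statement: it is quoted from Behrstock--Hagen--Sisto \cite[Proposition 3.1]{BHS1}, so there is no in-paper argument to compare yours against, and your attempt has to stand on its own. Your inductive skeleton is reasonable --- peel off the carrier of some $\^h_0\in\pi(x)$ using the gate $z$ of $y$ on the convex set $\Nr(\^h_0)$, use the gate equality $d(x,y)=d(x,z)+d(z,y)$ and convexity of carriers to get $\gamma_0$, and concatenate with a hierarchy path from $z$ to $y$. The surrounding bookkeeping (the concatenation is automatically geodesic; $d_{\CX}(\pi(z),\pi(y))\geq K-1$ because $\^h_0\in\pi(z)$; the projected sequence is a $\CX$-geodesic once one knows $\^h_1\in\pi(z)$) is fine.

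The problem is that the entire content of the proposition is concentrated in the ``key intermediate claim'' that $\^h_1\in\pi(z)$, and neither of your two cases establishes it. In the transverse case you assert that ``the gate $z$ may be taken in'' $\Nr(\^h_0)\cap\Nr(\^h_1)$; but $z$ is uniquely determined by $y$ and $\Nr(\^h_0)$ and cannot be ``taken'' anywhere, and rechoosing the $\CX$-geodesic does not move it (only rechoosing $\^h_0$ would) --- no argument is given for why the gate should be adjacent to $\^h_1$. In the tightly nested case your argument tacitly assumes that $\^h_1$ separates $\Nr(\^h_0)$ from $y$ (say $h_1\subsetneq h_0$ with $y\in h_1$); under that orientation the conclusion is indeed provable, but nothing in your setup guarantees it. A geodesic in $\CX$ is a purely combinatorial object: its second vertex $\^h_1$ need not separate $x$ from $y$, need not meet $I(x,y)$, and may lie on the far side of $\^h_0$ from $y$. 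Showing that \emph{some} $\CX$-geodesic can be chosen so that it is geometrically realized in this way is exactly what the proposition asserts, so the argument is circular at its decisive step. To repair it you would need to prove the existence of a good choice of $\^h_0$ and of the next hyperplane (for instance by analysing the hyperplanes dual to the first edges of geodesics from $z$ to $y$ and showing one of them is at $\CX$-distance $K-1$ from $\pi(y)$), or simply cite \cite{BHS1} as the paper does.
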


Such a geodesic $\gamma$ is called a \emph{hierarchy path} between $x$ and $y$. The sequence $(\^h_0,\dots,\^h_K)$ is called  \emph{the projection} of $\gamma$ on $\CX$, despite the fact that it may not be unique. Furthermore, if one of the edges in $\g_i$ is dual to $\^h_j$ (that is, $\g_i$ intersects both $h_j$ and $h_j^*$) then we say that $\g_i$ \emph{crosses $\^h_j$}.

\subsection{Comparing boundaries} \label{sec:comparing}

In this section, our goal is to show there is a cannonical  homeomorphism between the boundary of the contact graph and the regular boundary $\bdr X$

We begin with briefly recalling the definition of the boundary of a non-proper hyperbolic space $\mathcal Z$ (see for example \cite{Nonproper}). Choose a basepoint $o\in \mathcal Z$. A \emph{Gromov sequence} is  a sequence  $(x_n)_{n\in\N}$ of points of $\mathcal Z$ satisfying that $(x_n|x_m)_o$ tends to $+\infty$ as $n,m \to +\8$. Two Gromov sequences $(x_n)_{n\in\N}$ and $(y_n)_{n\in\N}$ are equivalent if $(x_n|y_m)_o$ also tend to $+\infty$. The boundary $\partial \mathcal Z$ is the set of all equivalence classes of Gromov sequences, and we say that the Gromov sequence $(x_n)_{n\in\N}$ converges to its equivalence class. This convergence does not depend on the choice of the basepoint $o$.

Let $\xi,\eta\in \mathcal Z\cup  \partial \mathcal Z$. The Gromov product extends to the boundary by taking $(\xi |\eta)_o=\inf\{ \liminf_{n,m\to +\infty} (x_i|y_j)_o\}$, where the infimum is taken over all classes of Gromov sequences $(x_n)_{n\in \N}$ and $(y_m)_{m\in\N}$ converging to $\xi$  and $\eta$ respectively (and $\liminf_{n,m} (x_n|y_m)_o=\sup\limits_{(n,m)\in \N^2} \left(\inf\limits_{i\geq n, j\geq m}  (x_i|y_j)_o\right)$).
The topology on $\mathcal Z\cup \partial \mathcal Z$ (which is metrizable) can be defined by saying that a sequence $\xi_n$ converges to $\xi$ if $(\xi_n|\xi)_o$ converges to $+\infty$.

\begin{lemma}\label{lem:hierarchyGromov}
 Suppose $h_0\subset \cdots \subset h_N$ are distinct pairwise strongly separated half-spaces. Then for every $0\leq n,m\leq N$ we have (in $\CX$) $(\^h_n,\^h_m)_{\^h_0}\geq \min(n,m)-3$.
 
 Furthermore for every $0<k<N$ we have $(\^h_0|\^h_N)_{\^h_k}\leq 3$.
\end{lemma}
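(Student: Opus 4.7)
The plan is to use Proposition \ref{prop:hierarchy} on a combinatorial geodesic traversing the entire chain. I would choose vertices $x_0 \in h_N^*$ and $x_N \in h_0$ (possible because every half-space and its complement contain vertices); since $h_0 \subset h_i \subset h_N$ for all $i$, any combinatorial geodesic from $x_0$ to $x_N$ crosses each hyperplane $\^h_i$. Applying Proposition \ref{prop:hierarchy} yields a hierarchy path $\gamma = \gamma_0\gamma_1\cdots\gamma_K$ together with a geodesic $(\ell_0,\ldots,\ell_K)$ in $\CX$ such that $\gamma_j \subset \Nr(\ell_j)$. The key structural observation I would establish is that each $\^h_i$ is crossed by a unique piece $\gamma_{j(i)}$, that $d_{\CX}(\^h_i, \ell_{j(i)}) \leq 1$, and that $i \mapsto j(i)$ is strictly monotonic (in fact strictly decreasing, since moving from outside $h_N$ to inside $h_0$ forces crossings in the order $\^h_N, \^h_{N-1}, \ldots, \^h_0$: a geodesic cannot cross $\^h_i$ before $\^h_{i'}$ for $i<i'$ because it would then be trapped in $h_i \subset h_{i'}$ before ever crossing $\^h_{i'}$).

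For this observation, I would argue that if $\gamma_j$ crosses $\^h_i$ then $\^h_i$ is either equal or transverse to $\ell_j$. Otherwise $\^\ell_j$ would be parallel to $\^h_i$ and lie in one of $h_i$ or $h_i^*$; but then any edge dual to $\^\ell_j$ has both endpoints on the same side of $\^h_i$ (since an edge meeting both sides of $\^h_i$ must be dual to $\^h_i$ itself), forcing $\gamma_j \subset \Nr(\ell_j)$ to stay on one side of $\^h_i$ and contradicting the crossing. Pairwise strong separation then forbids any single hyperplane from being transverse to two different $\^h_i$'s, so each $\gamma_j$ crosses at most one $\^h_i$. The map $j$ is therefore well-defined and injective, and its monotonicity follows from the order in which a geodesic crosses the nested chain.

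To conclude, I would use the elementary inequality $|(x|y)_o - (x'|y')_{o'}| \leq d(o,o') + d(x,x') + d(y,y')$, which combined with $d_{\CX}(\^h_i, \ell_{j(i)}) \leq 1$ provides a uniform $3$-shift between Gromov products at the $\^h_\bullet$ and at the $\ell_{j(\bullet)}$. Since $(\ell_0,\ldots,\ell_K)$ is a geodesic in the integer metric of $\CX$, one computes directly: for $n \leq m$ one has $j(0) > j(n) \geq j(m)$, so $(\ell_{j(n)} | \ell_{j(m)})_{\ell_{j(0)}} = j(0) - j(n) \geq n$ by strict monotonicity of $j$, and the $3$-shift gives the first claim. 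For the second part, $0 < k < N$ forces $j(0) > j(k) > j(N)$, placing $\ell_{j(k)}$ strictly between the two other points on the geodesic, so $(\ell_{j(0)} | \ell_{j(N)})_{\ell_{j(k)}} = 0$ and the $3$-shift yields $(\^h_0 | \^h_N)_{\^h_k} \leq 3$.

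The main obstacle is the geometric lemma tying $\gamma_j$ to $\ell_j$, namely that the cubical $1$-neighborhood of a hyperplane cannot straddle a parallel hyperplane; this is what allows strong separation to control the hierarchy decomposition. Once it is in hand, the rest reduces to integer arithmetic on a geodesic path.
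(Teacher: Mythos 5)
Your argument is correct and follows essentially the same route as the paper: a hierarchy path joining a vertex of $h_N^*$ to a vertex of $h_0$ (the paper travels in the opposite direction), the strictly monotone map $i\mapsto j(i)$ obtained from strong separation, the bound $d_{\CX}(\^h_i,\ell_{j(i)})\leq 1$, and then integer arithmetic along the geodesic in $\CX$ with a total shift of $3$ in the Gromov products. The only differences are cosmetic — you package the final estimates as a single stability inequality for Gromov products where the paper sums three distance inequalities, and you spell out why $\ell_{j(i)}$ must be transverse or equal to $\^h_i$, which the paper merely asserts.
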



\begin{proof}
Let $x\in h_0$ and $y\in h_N^*$, and let $\gamma$ be a hierarchy path from $x$ to $y$ with  geodesic projection $(\^c_1,\dots,\^c_M)$ as in Proposition \ref{prop:hierarchy}; meaning that $\gamma$ is a concatenation of geodesics $\gamma_1,\dots,\gamma_M$ with $\gamma_i\subset \Nr(\^c_i)$, and $\^c_1,\dots,\^c_M$ is geodesic in $\CX$. For every $i$, the geodesic $\gamma$ must cross $\^h_i$ at some point, so there exists $j(i)$ such that $\gamma_{j(i)}$ crosses $\^h_i$. 
 It follows that $\^c_{j(i)}$ is either transverse or equal to $\^h_i$, and hence  $d_{\CX}(\^c_{j(i)},\^h_i)\leq 1$. 
  Note also that since $\^h_{i+1}$ must be crossed by $\gamma$ after $\^h_i$ we have that $j(i+1)\geq j(i)$.
Furthermore, let $i\neq k$. Since $h_i$ is strongly separated from $h_k$, and $\^c_{j(i)}$ is transverse or equal to $\^h_i$, it follows that $\^c_{j(i)}$ is not transverse (nor equal) to $\^h_k$, hence $j(i)\neq j(k)$. Therefore $j$ is strictly increasing.
 
Now assume that $0\leq n\leq m\leq N$. Since $d_{\CX}(\^h_0, \^c_{j(0)})\leq 1$ we have $d_{\CX}(\^h_0,\^h_n)\geq d_{\CX}(\^c_{j(0)},\^c_{j(n)})-2 = j(n)-j(0)-2$. Similarly, $d_{\CX}(\^h_0,\^h_m)\geq j(m)-j(0)-2$. Therefore $$d_\CX(\^h_n,\^h_m)\leq d_\CX(\^c_{j(n)},\^c_{j(m)})+2=j(m)-j(n)+2.$$ Summing these three inequalities, we get $2(\^h_n|\^h_m)_{\^h_0}\geq 2j(n)-6$. Now since $j:\N\to \N$ is strictly increasing we have $j(n)\geq n$, hence $(\^h_n|\^h_m)_{\^h_0}\geq n-3. $ Similarly if $m\leq n$ we have $(\^h_n|\^h_m)_{\^h_0}\geq m-3 $, proving the first inequality. 

 Now to prove the other inequality, we fix $0\leq k\leq N$. Using the same argument as above we have $d_{\CX}(\^h_0,\^h_k)\leq d(\^c_{j(0)},\^c_{j(k)})+2=j(k)-j(0)+2$ and $d_{\CX} (\^h_N,\^h_k)\leq j(N)-j(k)+2$ as well as $d_{\CX} (\^h_N,\^h_0)\geq j(N)-j(0)-2$. Summing these three inequalities leads to $2(\^h_0|\^h_N)_{\^h_k}\leq 6$.

\end{proof}

We are now ready to state and prove the main result of this section:

\begin{theorem}\label{thm:bdrCX}
Let $X$ be a finite-dimensional CAT(0) cubical complex. There exists an $\Aut(X)$-equivariant homeomorphism $ \bdr X\to \partial \CX$.
\end{theorem}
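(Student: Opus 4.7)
The plan is to construct an explicit $\Aut(X)$-equivariant map $\Phi\colon \bdr X \to \partial \CX$ using the chain characterization of regular points in Definition \ref{def:regular pt}. Given $\xi \in \bdr X$, I will pick a nested chain $h_1 \supsetneq h_2 \supsetneq \cdots$ of pairwise strongly separated half-spaces with $\bigcap_n h_n = \{\xi\}$. Applying Lemma \ref{lem:hierarchyGromov} to the reversed increasing chain $h_n^* \subsetneq \cdots \subsetneq h_1^*$ will show that the Gromov products $(\^h_n \mid \^h_m)_{\^h_1}$ in $\CX$ tend to $+\infty$, so $(\^h_n)$ is a Gromov sequence, and I set $\Phi(\xi) := \lim \^h_n \in \partial \CX$. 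Equivariance is automatic from the construction.

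For well-definedness across different chains and for injectivity, the key observation is the following: if $(h_n)$ is such a chain for $\xi$ and $k$ is any half-space with $\xi \in k$, then $h_n \subset k$ for all sufficiently large $n$. Indeed, at most one $h_n$ can be transverse to $k$ by strong separation within the chain; the remaining cases compatible with $\xi \in h_n \cap k$ are $h_n \subset k$ and $k \subset h_n$, and the latter holding for arbitrarily large $n$ would force $k \subset \bigcap_n h_n = \{\xi\}$, which is impossible. Given a second chain $(k_m)$ for the same $\xi$, I will interleave the two into a single nested chain and apply Lemma \ref{lem:hierarchyGromov} to its strongly separated subchains to conclude that $\^h_n$ and $\^k_m$ share a Gromov limit. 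For injectivity, when $\xi \neq \eta$ I will choose a hyperplane $\^l$ separating them; chains for $\xi$ and $\eta$ eventually lie on opposite sides of $\^l$, so any $\CX$-geodesic between the corresponding hyperplanes passes within distance one of $\^l$, bounding the relevant Gromov products and forcing $\Phi(\xi) \neq \Phi(\eta)$.

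For surjectivity, given $\alpha \in \partial \CX$ I will take a quasi-geodesic ray of hyperplanes $(\^c_n)$ converging to $\alpha$, available because $\CX$ is quasi-isometric to a tree. Passing to a subsequence with $d_\CX(\^c_n, \^c_m) \geq 3$ whenever $n \neq m$, Remark \ref{rem:3->ss} makes them pairwise strongly separated and hence pairwise parallel. Since $\pi(o)$ is bounded in $\CX$ while $\^c_n \to \alpha$, the orientations stabilize so that $o \in c_n^*$ for all large $n$, yielding a nested chain $c_N \supsetneq c_{N+1} \supsetneq \cdots$ of pairwise strongly separated half-spaces. Any vertex $v \in X$ lies in only finitely many $c_n$, since only finitely many hyperplanes separate $v$ from $o$, so $\bigcap_{n \geq N} c_n \subset \partial X$ is nonempty, and I claim it is a singleton: two distinct points $\xi \neq \eta$ in it would be separated by some $\^l$, and the case analysis together with strong separation forces $l \subset c_n$ or $l^* \subset c_n$ for all large $n$, putting a vertex of $X$ in the intersection, a contradiction. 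The singleton $\xi$ is then in $\bdr X$ and satisfies $\Phi(\xi) = \alpha$ by construction.

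Continuity will follow from the same chain bookkeeping: if $\xi_k \to \xi$ in $\bdr X$, then for any fixed stage $N$ in a chain $(h_n)$ for $\xi$ one has $\xi_k \in h_N$ eventually (each half-space is clopen in $\~X$), so any chain for $\xi_k$ whose tail is contained in $h_N$ will have $(\Phi(\xi_k) \mid \Phi(\xi))_{\^h_1} \geq N - C$ for a uniform constant, whence $\Phi(\xi_k) \to \Phi(\xi)$ in $\partial \CX$. Continuity of the inverse will follow symmetrically from the surjectivity construction. I expect the main technical hurdles to be the interleaving argument for well-definedness and the singleton-intersection case analysis in surjectivity.
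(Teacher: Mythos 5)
Your construction of the forward map is exactly the paper's: represent $\xi\in\bdr X$ by a descending chain of pairwise strongly separated half-spaces and apply Lemma \ref{lem:hierarchyGromov} to get a Gromov sequence of hyperplanes. Your well-definedness and injectivity arguments are also in the spirit of the paper's (the paper splices chains using \cite[Lemma 5.11]{FLM} rather than interleaving, and gets injectivity by exhibiting the inverse rather than directly), and these parts are fixable with routine care: note that your interleaved chain has adjacent terms coming from different chains which your ``key observation'' only shows to be \emph{nested}, not strongly separated, so Lemma \ref{lem:hierarchyGromov} does not apply to it as stated --- you must skip one extra step so that every adjacent pair sandwiches a strongly separated pair (if $a\subset b\subset c$ and $b,c$ are strongly separated, so are $a,c$). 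Also your parallelism case analysis omits the possibility $k^*\subset h_n$, though the same argument disposes of it.

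The genuine gap is in surjectivity. After extracting a subsequence $(\^c_n)$ with pairwise $d_\CX$-distance at least $3$, hence pairwise strongly separated, you orient each $c_n$ so that $o\in c_n^*$ and assert that this yields a \emph{nested} chain. That does not follow: two parallel half-spaces both avoiding $o$ can satisfy $c_n\cap c_m=\varnothing$ (they face away from each other on opposite sides of $o$), in which case $\bigcap_n c_n$ may well be empty and the whole construction of $\psi(\alpha)$ collapses. Ruling this out is precisely where the second inequality of Lemma \ref{lem:hierarchyGromov} must be used: if arbitrarily large $\^c_N,\^c_M$ lay on opposite sides of some $\^c_k$, then $(\^c_N|\^c_M)_{\^c_k}\leq 3$, contradicting that $(\^c_n)$ is a Gromov sequence. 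Hence for each $k$ exactly one half-space bounded by $\^c_k$ contains all but finitely many of the $\^c_m$, and \emph{that} is the orientation one must choose (not ``away from $o$''); it gives the finite intersection property and hence a nonempty intersection, which \cite[Lemma 5.12]{FLM} then identifies as a singleton regular point. You do deploy this very Gromov-product argument in your injectivity step, so the tool is at hand, but as written the surjectivity (and the singleton and inverse-continuity arguments that lean on nesting) does not go through.
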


\begin{proof}

\medskip

\textbf{Definition of $\f:\bdr X\to \partial \CX$. }
Let us first define  the map $\f:\bdr X\to \partial \CX$. If $\bdr X$ is empty, there is nothing to do. If not, let $\alpha\in \bdr X$. By \cite[Proposition 7.4]{Fernos}, there exists a decreasing sequence $(h_n)$ of pairwise strongly separated half-spaces such that $\bigcap_{n}h_n=\{\alpha\}$. By Lemma \ref{lem:hierarchyGromov} we have $\lim\limits_{n,m\to +\infty}(\^h_m,\^h_n)_{\^h_0}=+\infty$. Therefore $(\^h_n)_{n\in \N}$ is a Gromov sequence, and thus defines a point $\f(\alpha)$ (and in particular $\partial \CX\neq \varnothing$).

We claim that this point $\f(\alpha)$ does not depend on the sequence $(h_n)$. To this end, fix a base point $o\in X$ and let $(k_n)$ be another sequence with $\bigcap_{n}k_n=\{\alpha\}$. For all $m$ large enough, $k_m$ separates $o$ from $\alpha$. Fix such an $m$.  Then there is an $N_m>m$ such that for every $n>0$, we get that $h_{n+N_m}\subset k_m$, are strongly separated \cite[Lemma 5.11]{FLM}. Then the sequence $(h'_n)$ defined by $h'_n=k_n$ if $n\leq m$ and $h'_n=h_{n+N_m}$ if $n>m$  is a decreasing sequence of pairwise strongly separated half-spaces. Hence by Lemma \ref{lem:hierarchyGromov} we have for $n>m$, $(\^k_m|\^h_{n+N_m})_{\^k_0}=(\^h'_m|\^h'_n)_{\^h'_0}\geq m-3$ which tends to $+\infty $ as $m$ tends to $+\infty$.

\medskip

\textbf{Continuity of $\f$.} Suppose $(\alpha_n)_{n\in \N}$ is a sequence of points in $\bdr X$ with $\alpha_n \to \alpha$ and $(h_m)_{m\in \N}$ a sequence of strongly separated half-spaces containing $\alpha$. By the previous argument the sequence $(\^h_m)_{m\in \N}$ represents $\phi(\alpha)$ in $\CX$, so that it suffices to prove that $(\varphi(\alpha_n)|\^h_m)_{\^h_0}$ tends to $+\infty$ as $n$ and $m$ tend jointly to $+\infty$.  Fix  $m>0$. Since $\alpha_n$ converges to $\alpha$ there exists $N$ such that for $n>N$ we have $\alpha_n \in h_i$ for every $i\leq m$. Using  \cite[Lemma 5.11]{FLM} again we see that for every $n$ there is a decreasing sequence $(k_j)_{j\in \N}$ of strongly separated half-spaces  containing $\alpha_n$ such that $k_j=h_j$ for $j\leq m$. Let $j>m$; using Lemma \ref{lem:hierarchyGromov}, we get again that $(\^h_m|\^k_j)_{\^h_0}\geq m-3$. Letting $j$ tend to $+\infty$, it follows that for every $n>N$ we have $(\^h_m|\varphi(\alpha_n))_{\^h_0}\geq m-3$. Hence we get that $\phi(\alpha_n)\to \phi(\alpha)$, which proves the continuity of $\phi$.

\medskip 

\textbf{Bijectivity of $\f$.}
Conversely, let us define $\psi:\partial \CX\to \bdr X$.  Fix $\xi\in \partial \CX$. Let $(\^h_n)_{n\in\N}$ be a  sequence converging to $\xi$. In particular, $(\^h_n)_{n\in\N}$ is unbounded so that extracting a subsequence we may and shall assume that $d_{\CX}(\^h_n,\^h_{m})\geq 3$, and therefore $\^h_n$ and $\^h_{m}$ are strongly separated by Remark \ref{rem:3->ss}, for each $n\neq m \in\N$.

We claim that for each $k\in \N$ there exists exactly one element in $\{h_{k}, h_{k}^*\}$ containing infinitely many of the $\{\^h_{n}: n\in \N\}$ and proceed by contradiction:
Suppose that for some $k$ the conclusion fails. Then there would exist $\^h_N$ and $\^h_M$, for $N$ and $M$ arbitrarily large, which are on opposite sides of $\^h_k$. Then using the second part of Lemma \ref{lem:hierarchyGromov}  we would get $(\^h_N|\^h_M)_{\^h_k}\leq 3$, contradicting the assumption that  $(\^h_n)_{n\in\N}$ is a Gromov sequence.

For each $n$, we define $h_n$ as the unique half-space bounded by $\^h_n$ containing infinitely many of the hyperplanes $\{\^h_m\}$. Then we get that $\bigcap_{n\in\N} h_n$ is a non-empty subset of the Roller boundary. 
Furthermore, since $\{\^h_n\}$ are pairwise strongly separated, the same is true for the chosen  half-spaces $h_n$. Hence by \cite[Lemma 5.12]{FLM} their intersection is a singleton and is a regular point, which we define as $\psi( (\^h_n)_{n\in\N})=\psi(\xi)$ (and again, it follows that the boundary $\bdr X$ is non-empty). The point $\psi(\xi)$ does not depend on the choice of the sequence $(\^h_n)$: indeed, suppose $(\^k_n)$ is another sequence converging to $\xi$ and fix $i\geq 0$. Then since $(\^k_n)$ is equivalent to $(\^h_n)$, the same argument as above shows that  there are infinitely many $n\in\N$ such that $\^k_n$ is on the same side of $h_i$ as $h_n$ (if not, we would get $(\^k_n|\^h_m)_{\^h_i}\leq 3$, contradicting that the two sequences are equivalent). This proves that $\psi( (\^k_n)_{n\in\N})$ belongs to $h_i$. Since $i$ is arbitrary, it follows that $\psi( (\^k_n)_{n\in\N})=\psi((\^h_n)_{n\in\N})$, hence $\psi(\xi)$ is well-defined.

 Before proving $\psi$ is continuous, we first show that $\psi$ and $\f$  are formal inverses of each other. We begin by showing that $\psi\circ \varphi=\Id$.  If either one of $\partial \CX$ or $\bdr X$ is empty, then we have seen that the other one also is and the result is clear. If not, 
 let $\alpha\in\bdr X$, and write $\{\alpha\}=\bigcap_{n\in\N}{h_n}$, where $(h_n)_{n\in\N}$ is a decreasing chain of strongly separated half-spaces. Then $\xi=\varphi(\alpha)=\lim \^h_n\in \partial\CX$. Since each $h_n$ contains all the  hyperplanes $\^h_m$ for $m\geq n$, we also get that $\psi(\xi)=\bigcap_{n\in\N} h_n=\{\alpha\}$, i.e. $\psi(\f(\a)) = \a$.

 Now we show that $\varphi\circ\psi=\Id$. Let $\xi\in \partial \CX$; write $\xi=\lim \^h_n$ and choose $h_n$ as above, so that $(h_n)$ is a decreasing chain of strongly separated half-spaces and $\psi(\xi)=\bigcap_{n\in\N} h_n$.  It follows that $\varphi(\psi(\xi))=\lim \^h_n=\xi$.

\medskip

\textbf{Continuity of $\f^{-1}$.}
To prove that $\psi$ is continuous, consider a sequence $(\xi_n)_{n\in\N}\in(\partial \CX)^\N$ converging to $\xi\in\partial\CX$. Let $h$ be a half-space containing $\psi(\xi)$, we have to prove that $\psi(\xi_n)$ is in $h$ for all $n$ sufficiently large. Fix  $A>0$. We know that there is an $N$ such that  $(\xi_n|\xi)_{\^h}>A$ for $n>N$. Now we can write $\xi_n$ as the limit of some hyperplanes bounding half-spaces containing $\psi(\xi_n)$.  In particular there exist  half-spaces $k'\subset k$ containing $\psi(\xi_n)$ and such that $(\^k|\xi_n)_{\^h}>A$ and $(\^k'|\xi_n)_{\^h}>A$. It follows that there exists a constant $C$ (depending only on $\delta$) such that $(\^k|\xi)_{\^h}>A-C$ and $(\^k'|\xi)_{\^h}>A-C$. Taking $A$ large enough, it follows that $k$ (resp. $k'$) and $h$ are not transverse and that $\^k$, $\^k'$ and $\xi$ are on the same side of $\^h$ and therefore contained in $h$. Since $\^k'\subset k$ it follows that $k\subset h$, and as $\psi(\xi_n)\in k$ we indeed get that $\psi(\xi_n)$ converges to $\psi(\xi)$.

 Finally, we note that the equivariance of $\varphi$ and $\psi$ is straightforward. 
 \end{proof}

The question of when the contact graph is unbounded was addressed by Hagen in \cite{HagenSimplicial}, where he deduces unboundedness in a variety of cases with a standing assumption that the cubical complex be locally compact and does not have an infinite family of pairwise crossing hyperplanes (see Sections 4 and 5). By imposing finite dimensionality, we are able to give the following characterization, which has no additional assumptions. We note however that the existence of a regular rank-1 isometry as in item (2) is guaranteed when $\Aut X$ is essential and nonelementary on $X$ \cite{CapraceSageev}.

\begin{cor}\label{Cor: CX unbounded, loxs}
Let $X$ be a finite dimensional irreducible CAT(0) cubical complex. The following hold:
\begin{enumerate}
\item The contact graph $\CX$ is unbounded if and only if there exists an infinite descending chain of strongly separated half-spaces. 
\item An element in $\Aut (X)$ is regular rank-1 on $X$ if and only if it  is loxodromic on $\CX$. 
\end{enumerate}
\end{cor}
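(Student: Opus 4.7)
The direction $(\Leftarrow)$ follows quickly from Lemma \ref{lem:hierarchyGromov}: an infinite descending chain $h_0 \supsetneq h_1 \supsetneq \dots$ of pairwise strongly separated half-spaces yields $d_{\CX}(\hat h_0, \hat h_n) \geq n-3$, so $\CX$ is unbounded. For $(\Rightarrow)$, the natural strategy is to combine Lemma \ref{lem:projections}—which says that points far apart in $\CX$ are separated by long chains of strongly separated hyperplanes in $X$—with Theorem \ref{thm:bdrCX}: once a point at infinity of $\CX$ is produced, the homeomorphism with $\bdr X$ and the definition of the regular boundary give the desired chain at once. The work is therefore in promoting arbitrarily long \emph{finite} strongly separated chains, along geodesics from $o$ to suitable vertices $x_n$ with $d_\CX(\pi(o),\pi(x_n))\to\infty$, to an actual infinite descending chain; I plan to do this via a compactness/diagonal argument in the compact Roller compactification $\overline X$ together with the Gromov-product estimate of Lemma \ref{lem:hierarchyGromov}.

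\textbf{Part (2), $(\Rightarrow)$.} Suppose $g^n h \subsetneq h$ is strongly separated. Iterating $g^n$ gives $h \supsetneq g^n h \supsetneq g^{2n} h \supsetneq \dots$, and this chain is pairwise strongly separated because strong separation propagates under nesting: if $h \supsetneq k$ is strongly separated and $k \supsetneq k'$, any hyperplane $\ell$ transverse to both $h$ and $k'$ satisfies $\ell \cap k \supseteq \ell \cap k' \neq \varnothing$ and $\ell \cap k^* \supseteq \ell \cap h^* \neq \varnothing$, so is transverse to $k$, contradicting the strong separation of $h$ and $k$. Lemma \ref{lem:hierarchyGromov} then gives $d_{\CX}(\hat h, g^{kn}\hat h) \geq k-3$, whence $\ell_{\CX}(g^n) \geq 1$ and $g$ is loxodromic on $\CX$.

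\textbf{Part (2), $(\Leftarrow)$.} A loxodromic $g$ on the hyperbolic space $\CX$ fixes two distinct points $\xi^\pm \in \partial \CX$; via Theorem \ref{thm:bdrCX} these correspond to distinct $g$-fixed points $\alpha^\pm \in \bdr X$. Choose any hyperplane $\hat h$ separating $\alpha^+$ from $\alpha^-$ in $\overline X$, oriented so that $\alpha^+ \in h$. Since $\ell_{\CX}(g) > 0$, for $n$ sufficiently large $d_{\CX}(\hat h, g^n\hat h) \geq 3$, so by Remark \ref{rem:3->ss} the hyperplanes $\hat h$ and $g^n\hat h$ are strongly separated and in particular parallel. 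The invariance of $\alpha^\pm$ under $g^n$ forces $\alpha^+ \in h \cap g^n h$ and $\alpha^- \in h^* \cap (g^n h)^*$, ruling out the two configurations $g^n h \subseteq h^*$ and $g^n h \supseteq h^*$; so either $g^n h \subsetneq h$ (and we are done) or $g^n h \supsetneq h$, in which case replacing $h$ by $h^*$ yields $g^n h^* \subsetneq h^*$ with the same (strongly separated) hyperplanes.

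\textbf{Main obstacle.} I expect (1) $(\Rightarrow)$ to be the delicate step. Mere unboundedness of a quasi-tree does not in general produce a Gromov sequence—an unbounded tree with only finite branches has empty Gromov boundary—so one cannot simply invoke hyperbolicity of $\CX$. The argument must exploit the concrete link between $\CX$-distance and chains of strongly separated half-spaces provided by Lemma \ref{lem:projections}, together with the compactness of $\overline X$, to truly promote finite chains to an infinite one.
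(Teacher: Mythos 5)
Your part (2) is correct. The forward direction is essentially the paper's own argument: pairwise strong separation of the orbit chain $h\supsetneq g^nh\supsetneq g^{2n}h\supsetneq\cdots$ followed by Lemma \ref{lem:hierarchyGromov} to get $d_{\CX}(\^h,g^{kn}\^h)\geq k-3$. For the converse the paper argues by contrapositive, splitting into the elliptic case (project a fixed vertex) and the case of a loxodromic that is not regular rank-1, where $g^{np}h\subset h$ is never strongly separated and hence $d_{\CX}(g^{np}\^h,\^h)\leq 2$ for all $n$. Your direct argument --- pulling the two fixed points of $g$ on $\partial\CX$ back to $\bdr X$ via Theorem \ref{thm:bdrCX}, separating them by a hyperplane, and using $d_{\CX}(\^h,g^n\^h)\geq 3$ together with $g^n$-invariance of $\alpha^\pm$ to orient the nesting --- is also valid; it leans more heavily on Theorem \ref{thm:bdrCX}, whereas the paper's contrapositive for this direction is elementary. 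The implication $(\Leftarrow)$ in (1) is likewise fine.

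The genuine gap is exactly where you predicted: the direction $(\Rightarrow)$ of (1) is only planned, not proved, and the plan as described cannot succeed. Arbitrarily long finite chains of pairwise strongly separated half-spaces do \emph{not} in general yield an infinite one. Consider the tree $T$ obtained by attaching to a single vertex a segment of length $n$ for every $n\in\N$: it is a finite-dimensional irreducible CAT(0) cubical complex; since a tree has no transverse hyperplanes, the $n$ hyperplanes along the $n$-th segment are pairwise strongly separated and give an embedded path of length roughly $n$ in the contact graph, which is therefore unbounded; yet every descending chain of half-spaces of $T$ terminates, so no compactness or diagonalization in $\overline T$ can produce an infinite chain --- there is none. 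For comparison, the paper's proof of (1) is the one-line chain ``$\CX$ unbounded iff $\partial\CX\neq\varnothing$ iff (by Theorem \ref{thm:bdrCX}) $\bdr X\neq\varnothing$ iff an infinite strongly separated chain exists''; the first equivalence is precisely the assertion you flagged as unjustified for a general unbounded quasi-tree, and the example above shows it does require something beyond the stated hypotheses (essentiality, which makes all half-spaces deep and which holds in every application of the corollary in the paper, is the natural candidate). So to complete this direction you must either add such a hypothesis and show that long strongly separated chains based at a fixed clique $\pi(o)$ can be continued indefinitely into deep half-spaces, or justify the equivalence ``$\CX$ unbounded $\Leftrightarrow\partial\CX\neq\varnothing$'' in your setting; neither is supplied by your current outline.
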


\begin{proof}

The hyperbolic graph $\CX$ is unbounded if and only if $\partial\CX\neq \varnothing$. By Theorem \ref{thm:bdrCX}, this is equivalent to $\bdr X\neq \varnothing$, which by definition is equivalent to the existence of  an infinite descending chain of strongly separated half-spaces. This proves (1).

Now let us prove (2). Suppose that $g\in \Aut X$ is regular rank-1 on $X$. Then $g$ is loxodromic on $\CX$ if and only if all of its powers are. Without loss of generality, assume that $g h\subset h$ are strongly separated half-spaces. Then $\Cap{n\in \N}{}g^n h=\{\a_+\}$ and $\Cap{n\in \N}{}g^{-n} h^*=\{\a_-\}$ are the two unique fixed points of $g$ in $\partial_{reg} X$ and hence  by Theorem \ref{thm:bdrCX} it follows that $\phi(\a_\pm)$ are the two unique fixed points for $g$ in $\partial \CX$. Next, applying Lemma \ref{lem:hierarchyGromov} to $\{g^n\^h\}_{n\in \N}$ we deduce that $d_\CX (g^n\^h, \^h) = (g^n\^h, \^h)_{\^h} \geq n-3$. Dividing by $n$ and taking the limit, we deduce that $\ell(g)>0$ and $g$ is loxodromic.

Conversely, suppose that $g\in \Aut X$ is not regular rank-1 on $X$. Then, one of the two following options hold and we conclude the proof in each case:
\begin{description}
\item[(a)] the element $g$ is  elliptic on $X$ or,
\item[(b)] it is loxodromic on $X$ but not regular rank-1. 
\end{description}

\noindent
{\bf(a):} If $ g$ is elliptic then by  \cite{Haglund}, there is a $\<g\>$-fixed point $x$. Projecting to $\CX$ we get that  $\<g\>$-orbit of  $\pi(x)$ is bounded and hence $g$ is elliptic in $\CX$.

\noindent
{\bf(b):} Suppose $g$ is loxodromic on $X$ and not regular rank-1. Then the same is true for all powers of $g$. Furthermore, since $g$ is loxodromic, then there exists an $h\in \frakH$ and $p\in \Z$ with $g^ph\subset h$ and these are not strongly separated. Therefore, for every $n\in \N$ we have that $g^{np}h\subset h$  are not strongly separated, meaning that $d_\CX(g^{np}\^h, \^h)\leq 2$ and hence the $\<g\>$ orbit of $\^h$ is bounded in $\CX$ . 
\end{proof}

\begin{remark}
It is possible to use Theorem \ref{thm:bdrCX}, together with Theorem \ref{thm:MaherTiozzo} below, to give a simpler proof of one of the main results of \cite{FLM}, namely, the convergence of the random walk on $X$ to a regular point in the Roller boundary.
\end{remark}

We conclude this section by comparing the various notions of elementary actions. Recall that an action on a CAT(0) cubical complex is called nonelementary if it has no finite orbit on $X\cup \partial X$. For groups acting on hyperbolic spaces, we use the following definition.

\begin{definition}
An action of a group on a hyperbolic space is called \emph{nonelementary} if there are two elements $g$ and $g'$ of the group which act as loxodromic isometries with disjoint fixed points on the Gromov boundary.
\end{definition}

Equivalently, the action of $G$ on a hyperbolic space $\mathcal Z$ is nonelementary if and only if $G$ does not have a bounded orbit in $\mathcal Z$ or an orbit of cardinality less than $2$ in $\partial \mathcal Z$ (see for example \cite[Theorem 6.2.3 and Proposition 6.2.14]{Nonproper}).

The following addresses the question of comparing when actions on $X$ versus actions on $\CX$ are nonelementary. It also is a reformulation of Caprace and Sageev's Rank-Rigidity Theorem in terms of the action on the contact graph. While item (1) of the theorem does not assume the action is essential, nor nonelementary on $X$, in practice, those would be a reasonable way to conclude that the $G$ action on $\CX$ is nonelementary, which is essentially Caprace and Sageev's Rank-Rigidity Theorem. It is worth noting that Hagen also has an adaptation of Caprace and Sageev's Rank-Rigidity Theorem for the contact graph, see \cite[Theorem 5.4]{Hagen}.

\begin{proposition}[Caprace-Sageev Rank-Rigidity on $\CX$]\label{prop:nonelementCX}
Let $X$ be a finite dimensional CAT(0) cubical complex. Consider an action $G\to\Aut(X)$ with $X$ finite dimensional. The following are true:

\begin{enumerate}
\item If the action of $G$ on $\CX$ is nonelementary then $X$ is irreducible and $G$ contains two regular rank-1 elements with disjoint fixed-point sets in $\partial_{reg} X$. 
\item If the $G$ action on $X$ is essential and nonelementary, then either $X$ is a product or the action of $G$ on $\CX$ is nonelementary. 
\end{enumerate}
\end{proposition}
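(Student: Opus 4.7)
The plan is to handle the two parts separately, leveraging the dictionary established earlier in the section: Theorem~\ref{thm:bdrCX} gives a $G$-equivariant homeomorphism between $\bdr X$ and $\partial \CX$, while Corollary~\ref{Cor: CX unbounded, loxs} identifies regular rank-1 elements on $X$ with loxodromic elements on $\CX$ (when $X$ is irreducible). With these two tools, each part reduces essentially to bookkeeping.

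For part (1), I would first rule out $X$ being a non-trivial product: if $X = X_1 \times X_2$, then every hyperplane of $X_1$ is transverse to every hyperplane of $X_2$ in $X$, so $\CX$ has diameter at most $2$ and admits no loxodromic isometries, contradicting the nonelementarity hypothesis on $\CX$. Hence $X$ is irreducible, and Corollary~\ref{Cor: CX unbounded, loxs}(2) identifies the two loxodromic elements $g, g' \in G$ on $\CX$ (coming from nonelementarity on $\CX$ and having disjoint fixed-point pairs on $\partial \CX$) as regular rank-1 elements of $X$. I would then transport their disjoint fixed-point pairs from $\partial \CX$ back to $\bdr X$ via the $G$-equivariant homeomorphism of Theorem~\ref{thm:bdrCX}.

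For part (2), assuming $X$ is not a product (hence irreducible), I would verify the two conditions that characterize nonelementarity on a hyperbolic space, namely no bounded $G$-orbit on $\CX$ and no $G$-orbit of cardinality at most $2$ on $\partial \CX$. The first follows from Caprace--Sageev's Rank Rigidity Theorem: applied to the essential nonelementary action on the irreducible $X$, it produces a regular rank-1 element $g \in G$, which by Corollary~\ref{Cor: CX unbounded, loxs}(2) is loxodromic on $\CX$. The second follows because $G$ is Roller nonelementary by Remark~\ref{rem: basic action defs}(2), hence has no finite orbit on $\~X$, in particular none on $\bdr X$, and this transfers to $\partial \CX$ via Theorem~\ref{thm:bdrCX}. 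The main obstacle is really upstream: once Theorem~\ref{thm:bdrCX} and Corollary~\ref{Cor: CX unbounded, loxs} are in hand, this proposition is mostly a translation exercise, with the one outside input being the appeal to Caprace--Sageev to produce a single regular rank-1 element; the upgrade from ``at least one loxodromic on $\CX$'' to ``two loxodromic elements with disjoint fixed points'' is then automatic via the standard equivalence of nonelementarity recalled just above the proposition.
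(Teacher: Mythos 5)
Your proposal is correct. Part (1) is essentially the paper's argument: extract two loxodromics with disjoint fixed points from nonelementarity on $\CX$, identify them as regular rank-1 via Corollary~\ref{Cor: CX unbounded, loxs}(2), and transport the fixed-point sets through Theorem~\ref{thm:bdrCX}; your explicit observation that a product forces $\mathrm{diam}(\CX)\leq 2$ is only implicit in the paper (which just says ``$\CX$ is unbounded, hence $X$ is not a product''), and establishing irreducibility \emph{before} invoking the corollary (which is stated for irreducible $X$) is if anything a cleaner ordering than the paper's.

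For part (2) you take a genuinely different route. The paper argues constructively: it takes a facing quadruple of pairwise strongly separated half-spaces from Caprace--Sageev, applies the Double Skewering Lemma to produce two explicit regular rank-1 elements $\g,\g'$ whose fixed points lie in disjoint half-spaces (using \cite[Proposition 4.3]{BeyFio}), and then pushes both forward to $\CX$ via Theorem~\ref{thm:bdrCX}. You instead verify the abstract characterization of nonelementarity recalled before the proposition: one regular rank-1 element from Rank Rigidity gives an unbounded orbit on $\CX$, and Roller nonelementarity rules out any finite orbit on $\overline X$, hence on $\bdr X\cong\partial\CX$. Both are sound; your version outsources the upgrade from ``one loxodromic, no small boundary orbit'' to ``two independent loxodromics'' to the general classification of actions on hyperbolic spaces, while the paper's version stays entirely inside cubical geometry and exhibits the two elements directly. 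The only point to watch in your argument is that a $G$-orbit of cardinality at most $2$ in $\bdr X$ is indeed a finite orbit in $\overline X$ (it is, since $\bdr X$ is $\Aut(X)$-invariant), so the appeal to Remark~\ref{rem: basic action defs}(2) closes that case as you claim.
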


\begin{proof}

Assume the action of $G$  on $\CX$ is nonelementary.  Then, there exist two elements $g,g'\in G$ which act loxodromically on $\CX$ and have disjoint fixed point sets. By Corollary \ref{Cor: CX unbounded, loxs}, it follows that $g$ and $g'$ are regular rank-1 on $X$ and by Theorem \ref{thm:bdrCX} their fixed point sets are disjoint. Furthermore, this also means that $\CX$ is unbounded and hence $X$ is not a product, i.e. it is irreducible, which proves (1).

To prove (2) assume the action of $G$ is essential, and nonelementary and that $X$ is irreducible. By \cite{CapraceSageev}, there exists a quadruple of pairwise facing super strongly separated half-spaces $h_1, h_2, h_1',h_2'$. Applying their Double Skewering Lemma, we can find $\g, \g'\in G$ such that $\g h_2\subset h_1^*\subset h_2$, and $\g' h_2'\subset h_1'^*\subset h_2'$. By \cite[Proposition 4.3]{BeyFio}, $\g$ has only two fixed points in $\bdr X$, which are contained in $h_1^*$ and $h_2^*$. Similarly $\g'$ has two fixed points, which are contained in $h_1'^*$ and $h_2'^*$. Using Theorem \ref{thm:bdrCX}, we conclude that indeed $\g$ and $\g'$ are loxodromic on $\CX$ with disjoint fixed points.
\end{proof}

\section{Random walks}

From now on, we fix a group $G$ acting on $X$ by automorphisms and assume that the action of $G$ is nonelementary. By \cite{CapraceSageev}, $X$ contains a copy of its $G$-essential core as a $G$-invariant sub CAT(0) cubical complex, possibly after cubical subdivision. Since we are only interested in the behavior of the orbit of some vertex, we shall replace $X$ by its $G$-essential core and assume the action is essential. Note that by \cite[Proposition 2.26]{CFI} any action $G\to \Aut X$ is either Roller elementary (i.e. has a finite orbit in the Roller compactification) or there is a finite index subgroup $G_0\leq G$ and a $G_0$-equivariant quotient of $X$ that is nonelementary and essential. However, we shall stick to these standard assumptions.

Let $\mu\in \Prob(G)$ be such that the support of $\mu$ generates $G$ as a semigroup. The \emph{random walk} associated to $\mu$ is the following process: we draw some elements $g_i$, independently, of law $\mu$, and form the product $Z_n=g_1\cdots g_n$. 

More formally, let $\Omega=\G^{\N}= \G\times \G^{\N^*}$ and $\Ps$ be the probability measure on $\Omega$ defined by $\Ps=\delta_e\times \mu^{\N^*}$. The space $\Omega$ is the \emph{space of increments}. If $\omega\in \Omega$, we denote by $g_i(\omega)$ the $i$th element of the sequence $\omega$. We often omit the $\omega$ and write only  $g_i$. Then we consider 
$$Z_n:=Z_n(\omega)= g_1(\omega)\cdots g_n(\omega)=: g_1\cdots g_n.$$

We are interested in the behavior of $Z_n\cdot o$, where $o$ is our choice of base-vertex in $X$. This behavior will depend in particular on integrability properties of $\mu$. Recall that the measure $\mu$ has  finite $n$th moment (relative to the metric $d$) if $\int_G d(go,o)^n d\mu(g)<+\infty$.

Finally, recall that a measure $\nu\in \Prob(\partial X)$ is \emph{stationary} if $\nu = \sum \mu(g) g_*\nu$. The following was proved in  \cite[Corollary 7.3]{FLM}.

\begin{theorem}\label{thm:uniquestat}
Assume that the action of $G$ on $X$ is nonelementary and essential. Then there exists a unique stationary measure on $\overline X$, which is supported on $\partial X$.
\end{theorem}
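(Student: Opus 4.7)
The plan is a Furstenberg-style contraction argument on $\mathrm{Prob}(\overline X)$, driven by the almost-sure convergence of the random walk to a regular boundary point established in \cite{FLM}. I will produce existence by pushing $\mathbb P$ forward through the boundary map, reduce uniqueness to a weak-$*$ martingale convergence statement in $\mathrm{Prob}(\overline X)$, and then do the real work in a single contraction step that identifies the limiting random measure with a Dirac mass at the regular limit point.

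\emph{Existence and support.} By \cite{FLM}, for $\mathbb P$-a.e.\ $\omega$ the sequence $Z_n(\omega)\,o$ converges in $\overline X$ to a point $\eta(\omega)\in\bdr X$. A direct computation from the Markov structure of the shift together with the cocycle identity (Remark~\ref{rem: horo is cocycle}) shows that $\nu_0:=\eta_*\mathbb P$ is a $\mu$-stationary probability measure carried by $\bdr X\subset\partial X$, which settles existence and the support claim.

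\emph{Reducing uniqueness to a martingale.} Let $\nu\in\mathrm{Prob}(\overline X)$ be any $\mu$-stationary measure. Iterating the stationarity relation yields $\nu=\int_\Omega (Z_n(\omega))_*\nu\,d\mathbb P(\omega)$ for every $n$. Since $\overline X$ is compact metrizable, $\mathrm{Prob}(\overline X)$ is compact metrizable for the weak-$*$ topology, and for every continuous test function $f$ the sequence $\omega\mapsto\int f\,d(Z_n(\omega))_*\nu$ is a bounded real-valued martingale. A standard diagonal argument then shows the measure-valued sequence $(Z_n(\omega))_*\nu$ itself converges weakly $\mathbb P$-a.s.\ to some random probability measure $\nu_\omega^\infty$, with $\nu=\int_\Omega \nu_\omega^\infty\,d\mathbb P(\omega)$.

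\emph{The contraction step (main obstacle).} The crux is to prove that $\nu_\omega^\infty=\delta_{\eta(\omega)}$ for $\mathbb P$-a.e.\ $\omega$. Using Definition~\ref{def:regular pt}, write $\{\eta(\omega)\}=\bigcap_{k\geq 0} h_k(\omega)$ as a descending intersection of pairwise strongly separated half-spaces, and observe that since $Z_n(\omega)\,o\to\eta(\omega)$ we have $Z_n(\omega)\,o\in h_k(\omega)$ for every $k$ and every sufficiently large $n$. The task is to upgrade this pointwise fact to $(Z_n(\omega))_*\nu\bigl(\overline X\setminus h_k(\omega)\bigr)\to 0$ for each $k$. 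The key geometric input is the median characterization of strong separation (Remark~\ref{Rem: MedianAxioms}): for any $x\in\overline X$, whether $Z_n(\omega)\,x$ lies in $h_k(\omega)$ is detected by the median $m(Z_n o, Z_n x, \eta(\omega))=Z_n\,m(o,x,Z_n^{-1}\eta(\omega))$, and strong separation forces this median to eventually enter $h_k(\omega)$ unless $x$ lies in the $\nu$-null ``bad'' set of points whose relative position to the reverse walk persistently avoids $\eta(\omega)$. That this bad set is $\nu$-negligible is precisely the place where nonelementarity of the $G$-action is used: if it had positive mass, the resulting measurable $G$-equivariant selection of boundary directions opposite to the forward limit would contradict the absence of finite orbits on $\overline X$ (via essentiality and the fact, recalled in Remark~\ref{rem: basic action defs}, that nonelementary implies Roller-nonelementary).

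Combining the three steps yields
\[
\nu=\int_\Omega \nu_\omega^\infty\,d\mathbb P(\omega)=\int_\Omega \delta_{\eta(\omega)}\,d\mathbb P(\omega)=\eta_*\mathbb P=\nu_0,
\]
which proves uniqueness and confirms that the unique stationary measure is carried by $\bdr X\subset\partial X$.
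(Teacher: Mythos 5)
The paper does not actually prove this statement: it is quoted directly from \cite[Corollary 7.3]{FLM}, so there is no in-paper argument to compare against. Judged on its own terms, your proposal is correct and standard in its first two steps (the hitting measure $\eta_*\mathbb P$ is stationary because $\eta(\omega)=g_1\eta(T\omega)$ a.s., and the measure-valued martingale $(Z_n)_*\nu\to\nu_\omega^\infty$ with $\nu=\int\nu_\omega^\infty\,d\mathbb P$), but the contraction step, which is the entire content of the theorem, has a genuine gap.

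Two concrete problems. First, the geometric mechanism you propose does not detect anything: since $Z_n(\omega)o\in h_k(\omega)$ for large $n$ and $\eta(\omega)\in h_k(\omega)$, the median $m(Z_no,Z_nx,\eta(\omega))$ lies in $h_k(\omega)$ for \emph{every} $x\in\overline X$ (a median lies in a half-space as soon as two of its three arguments do). So this median says nothing about whether $Z_nx\in h_k(\omega)$, i.e.\ about whether $(Z_n)_*\nu$ charges $\overline X\setminus h_k(\omega)$; also, Remark~\ref{Rem: MedianAxioms} contains no ``median characterization of strong separation.'' Second, the disposal of the ``bad set'' is asserted rather than proved: a set of positive $\nu$-measure depending on $\omega$ does not produce a measurable $G$-equivariant selection of boundary points, and no finite orbit is actually constructed, so the appeal to Roller-nonelementarity does not close the argument. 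What is actually needed here is a quantitative statement of the form $\nu\bigl(Z_n(\omega)^{-1}h_k(\omega)\bigr)\to 1$, which requires controlling where the \emph{inverses} $Z_n^{-1}$ (equivalently the reflected walk) send $\nu$-typical points relative to the strongly separated chain; this is exactly the nontrivial content of \cite[Corollary 7.3]{FLM} and it is missing from your write-up. As a smaller point, uniqueness also requires ruling out stationary measures charging the vertex set $X$ itself (e.g.\ via the maximal-atom argument and Roller-nonelementarity); in your scheme this is subsumed in the contraction step, so it too remains unproved.
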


In the following, we will denote this stationary measure by the letter $\nu$. The stationary measure for the reflected measure $\check \mu$ (defined by $\check\mu(g)=\mu(g^{-1})$) will be denoted $\check \nu$.

\subsection{The Random walk on the cubical complex}

Assume that $\mu$ has finite first moment. Then one can define the \emph{drift} of the random walk as follows.

\begin{definition}
The \emph{drift} of the random walk on $X$ is 
$$
\lambda= \underset{n}{\inf} \frac{1}{n}\int_\Omega d(Z_n(\omega)o,o)\,d\Ps(\omega)= \underset{n}{\inf}\frac {1}{ n} \int_G d(o,go) d\mu^{*n}.
$$
Equivalently, by  a standard application of Kingman's Subbaditive Ergodic Theorem, one can define $\lambda$ as the (almost surely defined) limit
$$\lambda = \Lim{n\to \8}\frac{1}{n} d(Z_n o,o).$$
\end{definition}

\begin{remark}\label{Rem: ch drift is drift}
Let $\check \mu$ be the measure defined by $\check \mu(g)=\mu(g^{-1})$, and $\check Z_n$ be the associated \emph{right} random walk with drift $\ch \lambda$. Then $\lambda=\ch \lambda$:

\begin{eqnarray*}
\lambda &=& \underset{n}{\inf}\frac {1}{ n} \int_G d(o,go) d\mu^{*n}\\
&=& \underset{n}{\inf}\frac {1}{ n} \int_G d(g^{-1}o,o) d\mu^{*n}\\
&=& \underset{n}{\inf}\frac {1}{ n} \int_G d(go,o) d\ch\mu^{*n}=\ch \lambda.
\end{eqnarray*}

\end{remark}

From now on, $\lambda$ (respectively $\ch \lambda$) will denote the drift of the $\mu$ (respectively $\ch \mu$) random walk on $X$. By \cite[Theorem 1.2]{FLM}, if the action of $G$ on $X$ is nonelementary and essential, we always have $\lambda,\ch\lambda >0$.

\begin{proposition}\label{prop:cplcvg}
Assume that $X$ is finite dimensional and the action of $G$ on $X$ is nonelementary and essential, and that $\mu$ has  finite second moment. Then for every $\eps>0$ there exist $C_n$ such that $\sum_{n\geq 1} C_n <+\infty$ and for every $x\in \overline X$
\begin{eqnarray*}
\Ps (| h_x(Z_n^{-1} o)-n\lambda | \geq \eps n )&\leq C_n\\
\Ps (| h_x(Z_n o)-n\lambda | \geq \eps n )&\leq C_n\\
\Ps( | d(Z_n o,o)-n\lambda| \geq \eps n)&\leq C_n.
\end{eqnarray*}

\end{proposition}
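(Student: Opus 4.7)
The plan is to prove the three statements in the order (3), (1), (2), with the horofunction estimates reducing to the distance estimate plus a uniform (in $x \in \overline X$) control on the Gromov product $(Z_n^{-1} o \mid x)_o$.

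For statement (3), write $d(Z_n o, o) = \sum_{k=1}^n \xi_k$ with $\xi_k := d(Z_k o, o) - d(Z_{k-1} o, o)$, so that $|\xi_k| \leq d(g_k o, o) =: D_k$, where the $D_k$'s are i.i.d. in $L^2$. Decompose further into a martingale plus a predictable drift, $d(Z_n o, o) = M_n + A_n$ with $A_n := \sum_k \mathbb{E}[\xi_k \mid \mathcal F_{k-1}]$, and control each piece via a Hsu--Robbins--Erd\H{o}s-type truncation argument at level $\eps n$: the contribution of the ``large'' increments $\{D_k > \eps n\}$ has summable union-bound probability since $\sum_n n\,\Ps(D_1 > \eps n) < \infty$, which is equivalent to $\mathbb E D_1^2 < \infty$; the truncated part is controlled by moment bounds combined with $A_n/n \to \lambda$ (Kingman's subadditive ergodic theorem applied to the stationary shift on $\Omega$).

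For (1) and (2), use the identity $h_x(Z_n^{-1} o) = d(Z_n o, o) - 2(Z_n^{-1} o \mid x)_o$ together with the analogue for $h_x(Z_n o)$; via Remark~\ref{Rem: ch drift is drift}, statement (2) reduces to (1) applied to the reflected walk $\check Z_n$. Nonnegativity of the Gromov product makes the upper bound on the horofunction immediate from (3), so the key point is the \emph{uniform in} $x \in \overline X$ summable estimate $\Ps((Z_n^{-1} o \mid x)_o \geq \eps n) \leq C_n$. For this, I would use the hierarchy path (Proposition~\ref{prop:hierarchy}) of a combinatorial geodesic $\gamma$ from $o$ to $Z_n^{-1} o$, whose dual hyperplane sequence projects to a $\CX$-geodesic $(\^h_0, \dots, \^h_K)$: every half-space contributing to $(Z_n^{-1} o \mid x)_o$ is crossed by $\gamma$, and by Lemma~\ref{lem:hierarchyGromov} such half-spaces must lie in an initial $\CX$-segment of length at most $(\pi(Z_n^{-1} o) \mid \pi(x))_{\pi(o)} + O(1)$. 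Since the action on $\CX$ is non-elementary when $X$ is irreducible (Proposition~\ref{prop:nonelementCX}), Maher--Tiozzo~\cite{MaherTiozzo} gives positive drift and polynomial deviations for the induced random walk on $\CX$, forcing the $\CX$-Gromov product with any fixed $x$ to be sublinear in $n$ with summable deviations, uniformly in $x$; the reducible case is handled separately using the finite-index product decomposition from Theorem~\ref{Th: CapraceSageev Fundamentals}.

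The main obstacle is obtaining summable (rather than merely $o(1)$ or $O(1/n)$) deviation estimates at the critical threshold of finite second moment --- exactly the Hsu--Robbins regime for i.i.d. sums. Extending summability from the i.i.d. $D_k$'s to the non-i.i.d. increments $\xi_k$ via martingale-plus-truncation is delicate, and upgrading the Maher--Tiozzo deviation bound on $\CX$ from its classical form under $L^2$ to a summable one requires exploiting the strong-separation structure given by Lemma~\ref{lem:hierarchyGromov} and the hyperbolicity of $\CX$. The uniformity in $x$ is then achieved because the Gromov-product bound depends on $x$ only through its (bounded) contact-graph projection $\pi(x)$.
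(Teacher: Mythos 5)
Your route differs substantially from the paper's, and both of its main steps have genuine gaps. The paper's proof is a short reduction: the horofunction map $\sigma(g,x)=h_x(g^{-1}o)$ is a continuous cocycle on the compact space $\overline X$ with $\sup_{x}|\sigma(g,x)|\leq d(go,o)$ square-integrable, and the stationary measure on $\overline X$ is \emph{unique} (Theorem \ref{thm:uniquestat}); one then quotes Benoist--Quint's deviation estimate for centerable cocycles (\cite[Proposition 3.2]{BQLinear}) applied to $\mu$ and $\check\mu$ to get the first two lines, and deduces the third from the second. Your sketch never uses unique stationarity, and this is where your part (3) breaks down: after the martingale decomposition $d(Z_no,o)=M_n+A_n$, the truncation/Hsu--Robbins argument can handle $M_n$ and the rare large increments, but for the predictable part $A_n=\sum_k\mathbb{E}[\xi_k\mid\mathcal F_{k-1}]$ you only invoke Kingman's theorem, which gives almost sure convergence of $A_n/n$ and nothing like summability of $\Ps(|A_n-n\lambda|\geq\eps n)$. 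There is no Baum--Katz-type theorem for general subadditive sequences; the summable rate is exactly what the unique-ergodicity argument of Benoist--Quint supplies, and your proposal is missing that mechanism entirely.

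The reduction of (1) and (2) to (3) has a second gap. You need $\Ps\bigl((Z_n^{-1}o\mid x)_o\geq\eps n\bigr)\leq C_n$ for \emph{every} $\eps>0$, uniformly in $x\in\overline X$, and you propose to extract this from the contact-graph Gromov product via hierarchy paths. But $\pi\colon X\to\CX$ collapses each carrier $\Nr(\^h)$ (which is typically unbounded in $X$) to a set of diameter at most $2$ in $\CX$, so a bounded Gromov product in $\CX$ does not bound the Gromov product in $X$: the geodesics from $o$ to $Z_n^{-1}o$ and from $o$ toward $x$ can share linearly many pairwise transverse hyperplanes while their $\CX$-projections fellow-travel for only $O(1)$ steps. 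In the paper the logical order is the reverse of yours: the Gromov-product estimates (Proposition \ref{prop:integrability}) are \emph{consequences} of Proposition \ref{prop:cplcvg} combined with the contact-graph growth of strongly separated hyperplanes (Corollary \ref{SSSgrowth}) through the median Lemma \ref{box}, and even then they hold only for some threshold $a>0$, not for arbitrary $\eps>0$. So your argument is both circular in spirit and quantitatively insufficient. To repair the proof you should work directly with the cocycle $h_x(g^{-1}o)$ on $\overline X$ and the uniqueness of the stationary measure, as the paper does.
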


\begin{proof}

 Let $\nu$ (resp. $\check\nu$) be the $\mu$-stationary (resp. $\check\mu$-stationary) measure on $\partial X$.

Recall that the map $\sigma:(g,\xi)\mapsto h_\xi(g^{-1} o)$ is a cocycle (see Remark \ref{rem: horo is cocycle}), and $\int_{G\times \overline X} \sigma(g,\xi) d\mu(g)d\nu(\xi)=\int_{G\times \overline X} \sigma(g,\xi) d\check \mu(g)d\check \nu(\xi)=\lambda$ (using Remark  \ref{Rem: ch drift is drift} above). Note also that$|h_x(Z_no)| \leq d(o,Z_no)$ by the triangle inequality, so that $\int_{G\times \overline X} \sup_{x\in \overline X} |\sigma(x,\xi)|^2d\mu(g)<+\infty$ (and similarly for $\check\mu$). 
 Therefore the first and second inequality follow from \cite[Proposition 3.2]{BQLinear}, applied to the measures $\check \mu$ and $\mu$ respectively.
The third inequality then follows from the second inequality and \cite[Proposition 9.4]{FLM}.
\end{proof}

\subsection{Random walk on the contact graph}

Random walks on hyperbolic spaces have been intensively studied. One can prove in particular the convergence to the boundary of the random walk. For non-proper spaces, this was first obtained by  Maher and Tiozzo \cite{MaherTiozzo}. Recently Gou\"ezel \cite{Gouezel} improved on their result and got rid of the moment assumptions.

\begin{theorem}\label{thm:MaherTiozzo}
Assume that $G$ has a nonelementary action on a hyperbolic space $(Y,d_Y)$. Let $o\in Y$. Let $\mu$ be an admissible measure on $G$ with finite first moment (relative to $d_Y)$, and consider the random walk $(Z_n)$ associated to $\mu$. 

Then there exists $\lambda_0>0$ (possibly infinite) such that $d_Y(o,Z_n o)/n\to \lambda_0$ almost surely.

Furthermore, for every $r<\lambda_0$ there exists $\kappa>0$ such that $$\Ps(d_Y(o,Z_n o)<r n)<e^{-\kappa n}.$$
\end{theorem}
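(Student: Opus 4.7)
\medskip

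\textbf{Plan.} The argument breaks into three natural steps.

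First, the existence of the limit $\lambda_0 \in [0, +\8]$ follows from Kingman's subadditive ergodic theorem applied to the cocycle $F_n(\omega) := d_Y(o, Z_n(\omega) o)$. By the triangle inequality and the $G$-invariance of $d_Y$, one has
$$F_{n+m}(\omega) \leq d_Y(o, Z_n o) + d_Y(Z_n o, Z_{n+m} o) = F_n(\omega) + F_m(T^n \omega),$$
where $T$ denotes the Bernoulli shift on $(\Omega,\Ps)$. The finite first moment assumption ensures $\int F_1\, d\Ps < +\8$, so Kingman's theorem yields almost-sure convergence $F_n/n \to \lambda_0 = \inf_n \frac{1}{n}\int F_n\, d\Ps \in [0,+\8]$.

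Second, to show $\lambda_0 > 0$ I would follow the strategy of Maher--Tiozzo. Nonelementarity ensures there exists a $\mu$-stationary probability measure $\nu$ on the Gromov boundary $\partial Y$; moreover $\nu$ is non-atomic, because an atom would correspond to a finite orbit under the semigroup generated by $\Supp(\mu) = G$, contradicting nonelementarity. By a Furstenberg-type martingale argument applied to the sequence of measures $(Z_n)_*\nu$, almost every sample path $Z_n o$ converges to some $\xi_\8 \in \partial Y$ with law $\nu$. If $\lambda_0$ were zero, then along a subsequence $Z_n o$ would stay within bounded shadows of $\xi_\8$, forcing $\nu$ to charge $\xi_\8$ as an atom and contradicting the above. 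One can alternatively express $\lambda_0$ as an integral of a Busemann-type cocycle against $\nu\otimes \mu$ and deduce positivity from the existence of loxodromic elements supplied by nonelementarity.

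Third, for the exponential deviation estimate I would use a \emph{pivoting} argument. The idea is to identify along each sample path a sequence of \emph{pivot times} $k_1 < k_2 < \cdots \leq n$ at which the trajectory has made certifiable progress that cannot be undone by subsequent increments. Hyperbolicity of $Y$ (Morse-type properties of loxodromic elements, together with thin triangles ruling out cancellation across pivots) guarantees that each pivot contributes at least a fixed amount $c > 0$ to $d_Y(o, Z_n o)$. Exploiting independence of the increments together with nonelementarity, one shows that pivots occur with exponential lower density: for some density $\rho$ with $\rho c$ arbitrarily close to $\lambda_0$ one has $\Ps(\#\{k_i \leq n\} < \rho n) \leq e^{-\kappa n}$. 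This immediately yields $\Ps(F_n < rn) \leq e^{-\kappa n}$ for any $r < \lambda_0$ and some $\kappa = \kappa(r) > 0$.

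The main obstacle is the third step. Kingman gives the limit for free, and positivity of the drift is by now a fairly standard consequence of boundary convergence. However, constructing a pivotal structure that simultaneously occurs at positive density and certifies unretractable progress requires a careful interplay between the hyperbolic geometry of $Y$ and the i.i.d.\ structure of the increments; this is the technical heart of the work of Maher--Tiozzo and its refinement by Gou\"ezel.
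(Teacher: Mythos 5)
The paper does not actually prove this statement: it is imported as a black box, with the almost-sure convergence and positivity of the drift attributed to Maher--Tiozzo \cite{MaherTiozzo} and the exponential deviation inequality under weak moment hypotheses to Gou\"ezel \cite{Gouezel}. Measured against those sources, your architecture is faithful --- Kingman for existence of the limit, boundary theory for positivity, a pivoting/linear-progress mechanism for the exponential bound --- and your attribution of the pivot technique to Gou\"ezel is correct: Maher--Tiozzo's own exponential estimate needs bounded support or exponential moments, and it is precisely Gou\"ezel's refinement (which in fact needs no moment assumption, whence the ``possibly infinite'' $\lambda_0$ in the statement, even though under a first moment Kingman forces $\lambda_0\leq\int d_Y(go,o)\,d\mu(g)<\infty$) that the paper relies on.

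As a proof, however, your second step has a genuine gap. The implication ``if $\lambda_0=0$ then along a subsequence $Z_no$ stays within bounded shadows of $\xi_\infty$, forcing $\nu$ to have an atom'' is a non-sequitur: $\lambda_0=0$ only says $d_Y(o,Z_no)=o(n)$, which is entirely compatible with $d_Y(o,Z_no)\to\infty$ and with almost-sure convergence to a non-atomic limit on $\partial Y$. Boundary convergence plus non-atomicity of $\nu$ does not by itself yield \emph{linear} escape; the actual argument in \cite{MaherTiozzo} runs through quantitative decay of the $\nu$- and $\mu^{*n}$-measures of shadows and a decomposition of the trajectory at times when it enters nested shadows, so that positivity of the drift is established together with a deviation inequality rather than deduced beforehand from convergence. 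The alternative you mention --- writing $\lambda_0$ as the integral of a Busemann-type cocycle against $\mu\otimes\nu$ --- likewise does not give positivity for free: one must still rule out that this integral vanishes, which again requires the shadow estimates (and in a non-proper space one must also be careful about which compactification carries the cocycle). Your third step correctly names what must be done but is a statement of intent, not an argument; since the paper treats the entire theorem as quoted, the appropriate resolution is to cite \cite{MaherTiozzo} and \cite{Gouezel} rather than to reprove them, and if you do want a self-contained proof, the pivotal construction and the shadow estimates are exactly the two places where real work remains.
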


Now let us go back to our situation: 
the group $G$ acts non-elementarily on the cubical complex $X$, and we consider the random walk $Z_n.o$ with $o\in X$. 

Let $S(n)$ be the maximal number of pairwise strongly separated hyperplanes separating $o$ from $Z_n o$.

\begin{corollary}\label{SSSgrowth}
Assume that $X$ is finite dimensional and irreducible and that $G$ has a nonelementary essential action on $X$.
Then there exists $0<\lambda_0\leq +\infty$ such that $\liminf S(n)/n\geq \lambda_0/3$ almost surely.

Furthermore there exists $A>0$ and  $C_n>0$, with $\sum C_n<+\infty$, such that 
$$\Ps(S(n)<A n)<C_n$$

\end{corollary}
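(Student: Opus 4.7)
The plan is to push the random walk $Z_n$ forward to the hyperbolic contact graph $\CX$, extract linear progress there via Maher--Tiozzo/Gou\"ezel, and then convert this into a lower bound on $S(n)$ using Lemma~\ref{lem:projections}.

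Since $X$ is irreducible and the $G$-action on $X$ is essential and nonelementary, Proposition~\ref{prop:nonelementCX}(2) guarantees that the action of $G$ on $\CX$ is nonelementary. Fix a hyperplane $\^h_0 \in \pi(o)$, to be used as a basepoint in $\CX$. Applying Theorem~\ref{thm:MaherTiozzo} (in the moment-free form of Gou\"ezel) to the orbit random walk $Z_n \^h_0$ on $\CX$ yields some $\lambda_0 \in (0,+\infty]$ such that $d_{\CX}(\^h_0, Z_n \^h_0)/n \to \lambda_0$ almost surely, together with the deviation bound: for every $r<\lambda_0$ there exists $\kappa>0$ with $\Ps(d_{\CX}(\^h_0, Z_n \^h_0) < rn) \leq e^{-\kappa n}$.

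The key bridge is that both $\pi(o)$ and $\pi(Z_n o) = Z_n \pi(o)$ are cliques in $\CX$, so each has diameter at most $1$; since they contain $\^h_0$ and $Z_n \^h_0$ respectively, two applications of the triangle inequality give
$$d_{\CX}(\pi(o),\pi(Z_n o)) \geq d_{\CX}(\^h_0, Z_n \^h_0) - 2.$$
Whenever this quantity is at least $3$, Lemma~\ref{lem:projections} applied to any combinatorial geodesic from $o$ to $Z_n o$ supplies at least $\lfloor (d_{\CX}(\^h_0, Z_n \^h_0)-2)/3 \rfloor$ pairwise strongly separated hyperplanes crossed along the way, hence
$$S(n) \;\geq\; \left\lfloor \frac{d_{\CX}(\^h_0, Z_n \^h_0) - 2}{3} \right\rfloor$$
for all sufficiently large $n$.

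Dividing by $n$ and taking $\liminf$ yields $\liminf_n S(n)/n \geq \lambda_0/3$, proving~(1). For~(2), fix any $A \in (0, \lambda_0/3)$ and choose $r$ with $3A < r < \lambda_0$; then for $n$ large one has $\{S(n) < An\} \subseteq \{d_{\CX}(\^h_0, Z_n \^h_0) < rn\}$, whose probability is at most $e^{-\kappa n}$. Setting $C_n := 1$ for the finitely many exceptional indices and $C_n := e^{-\kappa n}$ otherwise produces the desired summable sequence. The only subtle step is the clique-vs-vertex comparison above; all the real content is already packaged in Proposition~\ref{prop:nonelementCX} and Lemma~\ref{lem:projections}.
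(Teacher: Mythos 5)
Your proof is correct and follows exactly the route the paper takes: Proposition~\ref{prop:nonelementCX} to get nonelementarity on $\CX$, Theorem~\ref{thm:MaherTiozzo} (in Gou\"ezel's moment-free form) for linear progress and the exponential deviation bound there, and Lemma~\ref{lem:projections} to convert contact-graph distance into strongly separated hyperplanes. The only difference is that you spell out the clique-versus-vertex comparison and the choice of $r$ with $3A<r<\lambda_0$, which the paper leaves implicit.
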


\begin{proof}
By Proposition \ref{prop:nonelementCX} the action of $G$ on $\CX$ is nonelementary. Hence we can apply Theorem \ref{thm:MaherTiozzo} and deduce that $d_{\CX}(Z_n\pi(o),\pi(o))/n$ tends to $\lambda_0>0$, and that there exists $A>0$ such that the probability $\Ps(d_{\CX}(\pi(Z_no),\pi(o))<An)$ decreases exponentially in $n$. The result then follows from Lemma \ref{lem:projections}.
\end{proof}

\section{Proof of the Central Limit Theorem}

Our goal is now to prove the promised Central Limit Theorem for cubical complexes.
 As mentioned in the introduction, we shall make explicit use of the results and strategies from the work of Yves Benoist and Jean-Fran\c cois Quint on the Central Limit Theorem \cite{BenoistQuintHyp,BQLinear}. Specifically, they prove a Central Limit Theorem for groups that act ``non-elementarily" on $\R^d$ by linear automorphisms, or on proper Gromov hyperbolic metric space by isometries. 
 
 The general idea of their proof (and therefore of ours) is to first relate the distance $d(o,Z_n o)$ to a cocycle $\sigma : Z\times G\to \R$. In each case the space $Z$ is a relevant boundary and the cocycle can be interpreted as a suitable horofunction; the main point being that this horofunction is at a bounded distance from the quantity $d(o,Z_n o)$. Then the main idea is to use the cocycle relation to produce a martingale for which the Central Limit Theorem is well-known. The relevant criterion is quoted in Theorem \ref{thm:critereTCL} below.
 
 However, in order to do so, one must first ``correct" the cocycle by the coboundary of a bounded  function in order to indeed obtain a martingale. In the previous proofs (see eg \cite{LePage_82} or \cite{Led01}) this was obtained by a spectral argument, but the proof of Benoist and Quint gives an explicit integral formula for this correction. The difficulty is then to check that this formula is well-defined (i.e. that the integral converges). In the case of hyperbolic groups, this is done in \cite[Proposition 4.1 and 4.2]{BenoistQuintHyp}. Here while we follow the same general argument, the geometric content of these estimates is more involved. This is the content of Section \ref{sec:geomestimates}, which culminates with the proof of Proposition \ref{prop:integrability}.

%

\subsection{Some geometric estimates}\label{sec:geomestimates}

 This strategy relies on some estimates of the speed of the random walk in various directions. In order to unfold these estimates, we first isolate a lemma in cubical geometry.

\begin{lemma}\label{box} 
 Let $X$ be finite dimensional and fix  $o\in X$ and $x,y,z\in \overline X$. Let $m_1= m(o,z,y), m_2= m(o,z,x), m_3 = m(o,m_1, m_2), m_4= m(o, x,y)$. If there is a pair of strongly separated half-spaces $h_1\subset h_2$ such that $z, m_2 \in h_1$ and $o, m_3\in h_2^*$ then $m_1=m_3=m_4$.
\end{lemma}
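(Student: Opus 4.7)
The plan is to first pin down the positions of $x$ and $y$ relative to $h_1$ and $h_2$ using the ``median equals majority'' principle, then rewrite $m_3$ in two convenient forms via the identity $m(a,b,m(u,v,w)) = m(m(a,b,u),m(a,b,v),w)$ from Remark~\ref{Rem: MedianAxioms}, and finally compare those forms to $m_1$ and $m_4$ half-space by half-space, with strong separation providing the essential input. Since $h_1 \subset h_2$ gives $o, m_3 \in h_2^* \subset h_1^*$ and $z, m_2 \in h_1 \subset h_2$, the majority property applied to $m_3 = m(o,m_1,m_2) \in h_2^*$ forces $m_1 \in h_2^*$; applied to $m_1 = m(o,z,y)$ this in turn forces $y \in h_2^*$; and applied to $m_2 = m(o,z,x) \in h_1$ (with $o \in h_1^*$ and $z \in h_1$) it forces $x \in h_1$.

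Next, using the median identity together with $m(p,p,q)=p$, one computes $m(o,z,m_2) = m(o,z,m(o,z,x)) = m_2$ and likewise $m(o,z,m_1) = m_1$. Applying the same identity to $m_3 = m(o, m_2, m(o,z,y))$ and substituting yields $m_3 = m(o, m_2, y)$, while a symmetric computation starting from $m_3 = m(o, m_1, m(o,z,x))$ yields $m_3 = m(o, x, m_1)$.

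To establish $m_3 = m_4$, compare $m(o, m_2, y)$ with $m(o, x, y) = m_4$: by the majority rule they already agree on any half-space $k$ that does not separate $o$ from $y$, so it suffices to show that $x$ and $m_2$ lie on the same side of every $k$ that does separate $o$ from $y$. Since $x, m_2 \in h_1$, this holds whenever $h_1$ is contained in a single side of $k$, reducing the task to a case analysis of how $\hat{k}$ sits relative to $\hat{h}_1$. If $\hat{k}$ is not transverse to $\hat{h}_1$, the positions $o, y \in h_1^*$ combined with the fact that $k$ separates $o$ and $y$ rule out the nestings $k \subset h_1$ and $k^* \subset h_1$, leaving $h_1 \subset k$ or $h_1 \subset k^*$; if $\hat{k}$ is transverse to $\hat{h}_1$, then by strong separation $\hat{k}$ is not transverse to $\hat{h}_2$, and the analogous four-case analysis with $\hat{h}_2$ (using now $o, y \in h_2^*$) forces $h_2 \subset k$ or $h_2 \subset k^*$, so $h_1 \subset h_2$ again lies on one side of $k$. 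The equality $m_1 = m_3$ is proved in parallel, starting from $m_3 = m(o, x, m_1)$ and reducing to showing that $x$ and $m_1$ lie on the same side of any $k$ separating $o$ from $m_1$; the same transverse/non-transverse dichotomy with respect to $\hat{h}_1$ and $\hat{h}_2$ closes the case. The main obstacle in this last step is the bookkeeping — keeping straight which side of $\hat{h}_1$ and $\hat{h}_2$ each of the six points $o, x, y, z, m_1, m_2$ lies on — and the indispensable input is strong separation, which is exactly what forbids $\hat{k}$ from being transverse to both $\hat{h}_1$ and $\hat{h}_2$ simultaneously and thereby forces the resolution.
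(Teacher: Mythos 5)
Your setup is sound: the positions of $x$ and $y$ obtained from the majority rule, the identities $m_3=m(o,m_2,y)=m(o,x,m_1)$, and the half-space-by-half-space proof that $m_3=m_4$ are all correct, and in spirit this is the same argument as the paper's (which runs the contrapositive: a half-space separating two candidate medians is shown, via the majority rule, to be transverse to both $h_1$ and $h_2$). However, the last step, $m_1=m_3$, has a genuine gap. You correctly reduce it to showing that every half-space $k$ with $m_1\in k$ and $o\in k^*$ also contains $x$; but note that here you need $x$ on a \emph{specific} side of $k$ (the side of $m_1$), unlike in the $m_3=m_4$ step, where it sufficed to put $x$ and $m_2$ on a \emph{common} side. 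The dichotomy you invoke only yields that $h_1$ lies entirely on one side of $k$: it rules out $k\subset h_1$ (since $m_1\in k\cap h_1^*$) and $k^*\subset h_1$ (since $o\in k^*\cap h_1^*$), and excludes transversality via strong separation and $\hat h_2$. That still leaves both $h_1\subset k$ and $h_1\subset k^*$ open, and in the second case $x\in h_1\subset k^*$ lies on the opposite side from $m_1$ — exactly the configuration you must exclude. Nothing in your bookkeeping, which only records positions relative to $\hat h_1$ and $\hat h_2$, rules this out.

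The missing ingredient is the position of $z$ relative to $k$: since $m_1=m(o,z,y)\in k$ while $o\in k^*$, the majority rule gives $z\in k$ (and $y\in k$). As $z\in h_1$, this kills the case $h_1\subset k^*$ and leaves $h_1\subset k$ as the only surviving possibility, whence $x\in k$ as required. Equivalently, and closer to the paper's phrasing: if $x\in k^*$, then the four points $z\in k\cap h_i$, $m_1\in k\cap h_i^*$, $x\in k^*\cap h_i$, $o\in k^*\cap h_i^*$ witness $k\pitchfork h_i$ for $i=1,2$, contradicting strong separation. With that one observation added, your proof is complete.
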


 The reader may visualize a general configuration of the points as follows:
\begin{center}
 \includegraphics[]{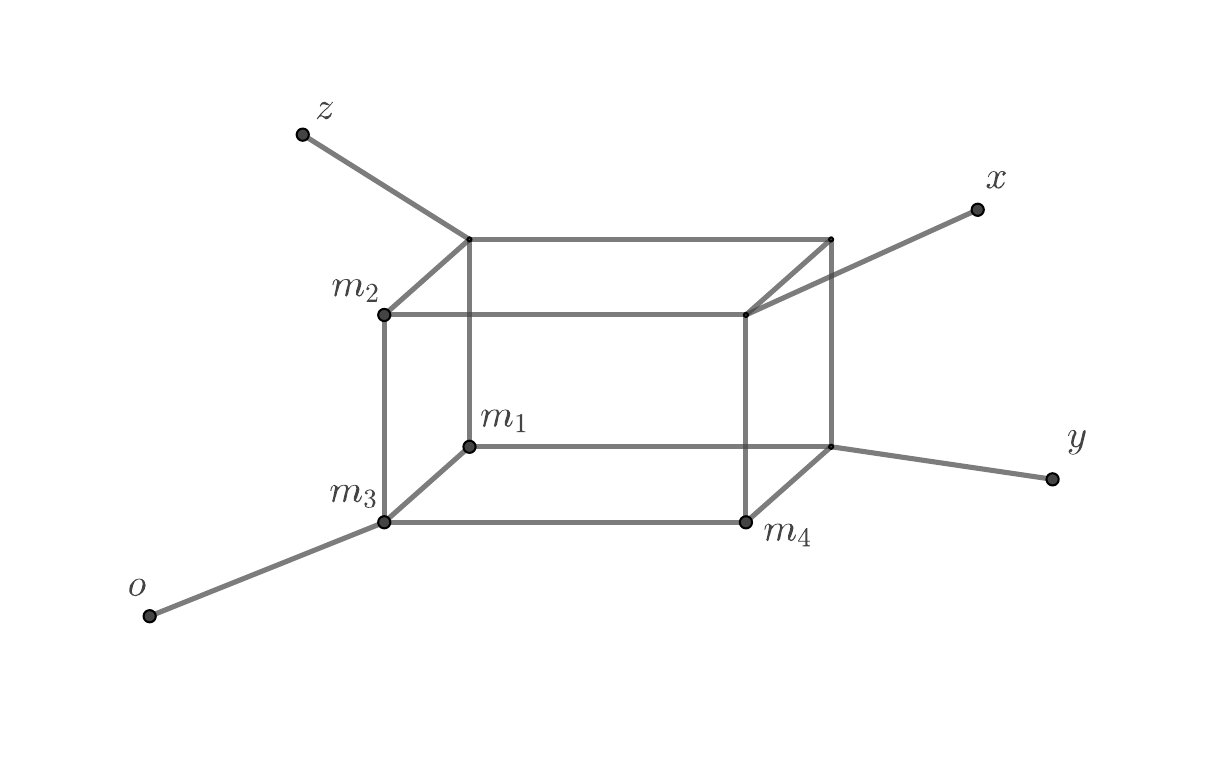}
 \end{center}

 \begin{proof}
  We begin with an observation, which directly follows from the definition of the median: Let $a,b, c\in \overline X$, and $m=m(a,b,c)$, $h\in \frakH$ such that $a\in h$. It follows that $m\in h^*$ if and only if $b,c\in h^*$. We will apply this observation multiple times below. 
  
  To simplify the exposition, we  shall say that two points $a,b$  are \emph{on the same side} if either $a,b \in h_1\subset h_2$ or $a,b \in h_2^*\subset h_1^*$ or  \emph{on the other side} if either $a \in h_1\subset h_2$ and $b \in h_2^*\subset h_1^*$, or vice versa. 
  
Our hypotheses state that $z, m_2 \in h_1\subset h_2$ and $o,m_3\in h^*_2\subset h^*_1$ and  so
\begin{itemize}
\item  $m_2=m(o,z,x)$ is on same side as $z$ and both are on the other side of $o$ and hence $x\in h_1\subset h_2$. 
\item  $m_3 = m(o,m_1, m_2)$ is on the same side as $o$ and both are on the other side of $m_2$ and hence $m_1\in h^*_2\subset h^*_1$.
\item $m_1 = m(o,z,y)$ is on the same side as $o$ and on the opposite side of $x$ and hence $y \in h^*_2\subset h^*_1$.
\item $o$ is on the same side as $y$ and hence $m_4 \in h^*_2\subset h^*_1$ (we include for completeness, but we will not need it). 
\end{itemize}

To summarize we have:

$$x,z,m_2\in h_1\subset h_2\text{ and } o,y,m_1, m_3, m_4\in h^*_2\subset h^*_1$$
  
We proceed by contrapositive. We show that if $m_1\neq m_3$ or $m_1\neq m_4$ then $h_1$ and $h_2$ are not strongly separated. 

If $m_1 \neq m_3$ then there is a half-space $k\in \frakH$ such that $m_1\in k$ and $m_3\in k^*$. Arguing as above we have that 

\begin{itemize}
 \item Since $m_3 = m(m_1, m_2, o)$ we deduce that $m_2, o\in k^*$
 \item Since $m_1 = m(o,z,y)$ we deduce that $y,z\in k$. 
 \item Since $m_2 = m(o,z,x)$ we deduce that $x\in k^*$.
\end{itemize}

Therefore, for each $i= 1, 2$ we have that $m_1 \in k\cap h_i^*\neq \varnothing$, $o\in k^*\cap h_i^*\neq \varnothing$, $z\in k\cap h_i\neq \varnothing$, and $x\in k^*\cap h_i\neq \varnothing$, i.e. $h_i\pitchfork k$ and so $h_1$ and $h_2$ are not strongly separated.

Before continuing, we note that by the median axioms (see Remark \ref{Rem: MedianAxioms}) we have: 
$$m_3= m(o,m(o,z,y),m(o,z,x)) = m(o,z,m(x,y,o)) = m(o,z,m_4).$$

Next, suppose that $m_3\neq m_4$. Then there exists $\ell \in \frakH$ such that $m_3 \in \ell$ and $m_4\in \ell^*$. Arguing as above we conclude $o,z\in \ell$, and since $m_4 = m(o,z,y)$ it follows that $y\in \ell$. And, since $m_1= m(o,z,y)$ we conclude that $m_1\in \ell^*$. 

Hence, it follows that $h_i\pitchfork \ell$ for $i =1,2$ using the four points $o,m_1,y,z$ and the above calculations to show that the four appropriate intersections are not empty. Therefore, $h_1$ and $h_2$ are not strongly separated. 

\end{proof}

The following is the quantitative heart of the paper. It will be applied later to the sequence $g_n=Z_n$. Its assumptions are the ones that we know to hold with high probability (by Proposition \ref{prop:cplcvg} and Theorem \ref{thm:MaherTiozzo}): the random walks goes to infinity at linear speed, both in $\CX$ and in $X$; and the cocycle $h_x(Z_n^{-1} o)$ also 
goes to infinity at linear speed. This will allow us to give a quantitative lower bound on the Gromov product $(Z_n x|y)_o$, which we will use in Proposition \ref{prop:integrability} to finally deduce the integrability needed in Lemma \ref{lem:centerable} below.

\begin{lemma}\label{lem:boite} Assume that $X$ is finite dimensional.  Let $(g_n) \in G^\N$, and $S(n)$ be the maximal number of pairwise strongly separated hyperplanes separating $o$ from $g_n o$. Fix $x,y\in \overline X$.
Assume that there exist $\lambda, \eps, A>0$  such that
\begin{itemize}
\item $S(n)\geq A n$
\item
 $|h_x(g_n^{-1} o) - n\lambda |\leq \eps n;$
\item $|d(g_n o, o) - n\lambda |\leq \eps n;$
 \item $|h_y(g_n o) - n\lambda |\leq \eps n.$ 
\end{itemize}
Then  we have
\begin{enumerate}
\item $(g_n o| y)_o \leq \eps n$
\item  $(o|x)_{g_n^{-1} o} \geq (\lambda-\eps) n$.
\end{enumerate}

If in addition, $A>3\eps$ then for $n$ sufficiently large we have that
\begin{enumerate}
\item [(3)]$(g_n x|y)_o \leq \eps  n$.
\end{enumerate}
\end{lemma}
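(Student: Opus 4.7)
The plan is to derive parts (1) and (2) directly from the Gromov product--horofunction identity in Remark \ref{rem:Gproduct}, and to establish (3) by invoking Lemma \ref{box} along the chain of $S(n)$ pairwise strongly separated half-spaces supplied by the first hypothesis. Parts (1) and (2) are essentially immediate: since $h_y(o) = h_x(o) = 0$, the identity $2(a|x)_z = d(a,z) + h_x(z) - h_x(a)$ gives
$$2(g_n o|y)_o = d(g_no,o) - h_y(g_no) \leq (n\lambda + \eps n) - (n\lambda - \eps n) = 2\eps n,$$
and
$$2(o|x)_{g_n^{-1}o} = d(g_no,o) + h_x(g_n^{-1}o) \geq 2(n\lambda - \eps n).$$

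For (3), my strategy is to apply Lemma \ref{box} with $z = g_n o$ and input points $g_n x$ and $y$. Set $m_1 = m(o, g_n o, y)$, $m_2 = m(o, g_n o, g_n x)$, $m_3 = m(o, m_1, m_2)$, and $m_4 = m(o, g_n x, y)$; the conclusion $m_1 = m_4$ yields $(g_n x|y)_o = d(o, m_4) = d(o, m_1) = (g_n o | y)_o \leq \eps n$ by (1). The task reduces to exhibiting a pair of strongly separated half-spaces $h_1 \subset h_2$ with $g_n o, m_2 \in h_1$ and $o, m_3 \in h_2^*$.

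The key geometric estimates come first: from $2(g_n^{-1}o|x)_o = d(g_no,o) - h_x(g_n^{-1}o) \leq 2\eps n$, translating by $g_n$ yields $d(g_no, m_2) \leq \eps n$, and (1) gives $d(o, m_1) \leq \eps n$. Orient the chain $h_{i_1} \subset \cdots \subset h_{i_{S(n)}}$ of pairwise strongly separated half-spaces so that $g_n o$ lies in each and $o$ in none. By the median identity (if $a\in h^*$ then $m(a,b,c)\in h^*$ iff at least one of $b, c$ is in $h^*$), $m_2 \in h_{i_j}^*$ iff $g_n x \in h_{i_j}^*$ iff $h_{i_j}$ separates $m_2$ from $g_n o$; hence there are at most $d(g_n o, m_2) \leq \eps n$ such indices, and by nesting they form an initial segment $\{1,\ldots,a\}$ with $a \leq \eps n$. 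The analogous argument with $m_1$ and $y$ shows that $\{j : y \in h_{i_j}\}$ is a terminal segment $\{S(n)-b+1,\ldots,S(n)\}$ of size $b \leq \eps n$.

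For $n$ large enough, $a + b \leq 2\eps n < An - 2 \leq S(n) - 2$, so I may take $\alpha = a+1$ and $\beta = S(n) - b$ with $\alpha < \beta$, and set $h_1 := h_{i_\alpha}$, $h_2 := h_{i_\beta}$. These are strongly separated and nested; since $g_n x \in h_{i_\alpha}$, we have $m_2 \in h_1$; since $y \in h_{i_\beta}^*$, we have $m_1 \in h_2^*$, and then $m_3 = m(o, m_1, m_2) \in h_2^*$ by the median axiom. Lemma \ref{box} now applies and gives $m_1 = m_4$, finishing (3). The main obstacle is the bookkeeping in this counting argument — correctly tracking which side of each $h_{i_j}$ each of the points $g_n o, g_n x, y$ and the derived medians $m_1, m_2, m_3$ lies on, so that the precise hypotheses of Lemma \ref{box} are met with the available $\eps n$-sized budgets.
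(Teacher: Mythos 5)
Your proof is correct and follows essentially the same route as the paper: parts (1) and (2) are the same direct applications of the identity $2(a|x)_z = d(a,z)+h_x(z)-h_x(a)$, and part (3) uses the identical median configuration $m_1,m_2,m_3,m_4$ fed into Lemma \ref{box} with $z=g_no$. The only difference is cosmetic: to locate the pair $h_1\subset h_2$ inside the chain of $S(n)$ strongly separated half-spaces, you bound $d(o,m_1)$ and $d(g_no,m_2)$ separately and excise initial/terminal segments, whereas the paper bounds $d(o,m_3)+d(m_2,g_no)\leq 3\eps n$ via the decomposition $d(o,g_no)=d(o,m_3)+d(m_3,m_2)+d(m_2,g_no)$ and extracts two chain elements separating $m_2$ from $m_3$ — both counts are valid and yield the same conclusion.
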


\begin{proof}[Proof of (1) and (2)]
Recall that horofunctions are normalized so that $h_x(o)=0$ for every $x\in \overline X$.
Applying the last equality of Remark \ref{rem:Gproduct}, we have that 
 
 \begin{eqnarray*}
0\leq 2(g_n o | y)_o &= |d(o,g_n o)-h_y(g_n o)  -\lambda n + \lambda n|\\
&\leq | d(o,g_n o)-\lambda n| + |h_y(g_n o)   - \lambda n | \\
&\leq 2\eps n.
\end{eqnarray*}

For the second inequality we apply the last equation of Remark \ref{rem:Gproduct}:

$$2(o|x)_{g_n^{-1} o} = d(g_no,o)+h_x(g_n^{-1} o)-h_x(o) \geq -2\eps n+2\lambda n.$$
\end{proof}
\begin{proof}[Proof of (3)]

Let $m_1=m(o,g_n o,y)$, $m_2=m(o,g_n o,g_n x)$,  $m_3=m(m_1,m_2,o)$, and $m_4=m(o,g_nx,y)$.

  By part (1) we have $d(o,m_1)=(y|g_n o)_o\leq\eps n$. Similarly by part (2) we have $d(o,m_2)=(g_n o| g_n x)_o=(o| x)_{g_n^{-1} o}\geq (\lambda -\eps) n$, and we therefore get
 \begin{eqnarray*}
d(m_2,m_3)&=d(o,m_2)-d(o,m_3)\\
&\geq d(o,m_2)-d(o,m_1)\\
&\geq (\lambda -2\eps )n.
\end{eqnarray*}

 Note that $m_2, m_3\in X$, $m_3\in I(o,m_2)$, and $m_2\in I(o,g_no)$. Therefore, there exists a geodesic from $o$ to $g_n o$ which passes first through $m_3$ and then through $m_2$, i.e.
 $d(o,g_no)= d(o,m_3)+d(m_3,m_2)+d(m_2,g_n o)$. Hence
 
 \begin{align*}
  d(o,m_3) +d(g_no,m_2) &= d(o,g_n o) - d(m_3,m_2)\\
 &\leq (\lambda + \eps) n - (\lambda-2\eps) n\\
 &\leq 3 \eps n.
 \end{align*}

   Since $S(n)>An$, there exists a family $(h_k)_{k=1}^{\lfloor An\rfloor}$ of $\lfloor An\rfloor$-many pairwise strongly separated hyperplanes which are between $o$ and $g_n o$. Assume that $A>3\eps $. Then for $n$ large enough we have $\lfloor An\rfloor >3\eps n+2$. Among the $d(o,g_n o)$-many hyperplanes separating $o$ from $g_n o$, there are at most $3\eps n$ of them separating either $o$ and $m_3$ or $g_n o$ and $m_2$, and the rest of them separate $m_2$ from $m_3$. Hence  there exist two hyperplanes from the family $(h_k)$ which separate $m_2$ from $m_3$. 
   In particular, there exist at least two strongly separated hyperplanes separating $m_2$ from $m_3$.
   By Lemma \ref{box} we conclude that $m_1=m_3=m_4$.
  Hence $(g_n x|y)_o =d(o,m_4) = d(o,m_1)\leq \eps n$.

\end{proof}

\begin{proposition}\label{prop:integrability}
Let $X$ be irreducible and $G\to \Aut(X) $ a nonelementary and essential action.
Assume also that $\mu$ has finite second moment. Then there exist $a>0$ and $C_n$ such that $\sum_{n\geq 1} C_n <+\infty$ and for every $x,y\in \overline X$  we have, for every $n$:
\begin{eqnarray*}
\Ps ( (Z_no|Z_n x)_o\leq an) &\leq C_n\\
\Ps( (Z_no|y)_o\geq an)&\leq C_n\\ 
\Ps( (Z_n x| y)_o\geq an) &\leq C_n
\end{eqnarray*}
\end{proposition}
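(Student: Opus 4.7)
The strategy is to apply Lemma \ref{lem:boite} deterministically to the sequence $g_n = Z_n(\omega)$, and then control the probability that its hypotheses fail using the estimates already in place. Recall that Lemma \ref{lem:boite} is a purely geometric implication: once $S(n) \geq An$ and three linear estimates on $h_x(g_n^{-1}o)$, $d(g_n o,o)$, $h_y(g_n o)$ hold, all three Gromov-product bounds follow. So the proof reduces to showing that these four conditions hold simultaneously off a set of summable probability, uniformly in $x,y \in \overline X$.

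First, I would pick the parameters. Let $A > 0$ be the constant from Corollary \ref{SSSgrowth} (which applies because $X$ is irreducible and the $G$-action is nonelementary and essential). Fix $\eps > 0$ with $\eps < \min(\lambda/2,\, A/3)$ and set $a \in (\eps, \lambda - \eps)$. Proposition \ref{prop:cplcvg} supplies a summable sequence $C'_n$, independent of the boundary points, such that for each $x, y \in \overline X$:
$$\Ps\bigl(|h_x(Z_n^{-1} o) - n\lambda| \geq \eps n\bigr) \leq C'_n,\quad \Ps\bigl(|h_y(Z_n o) - n\lambda| \geq \eps n\bigr) \leq C'_n,\quad \Ps\bigl(|d(Z_n o, o) - n\lambda| \geq \eps n\bigr) \leq C'_n.$$
Corollary \ref{SSSgrowth} supplies summable $C''_n$ with $\Ps(S(n) < An) \leq C''_n$. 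By a union bound, outside an event $E_n$ of probability at most $3 C'_n + C''_n$, all four hypotheses of Lemma \ref{lem:boite} hold, uniformly in $x, y$.

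On the complement of $E_n$, I would simply read off the three conclusions of Lemma \ref{lem:boite}. Conclusion (1) yields $(Z_n o | y)_o \leq \eps n < a n$. For the first inequality of the proposition, I invoke the isometric identity $(Z_n o | Z_n x)_o = (o | x)_{Z_n^{-1} o}$, whose right-hand side is at least $(\lambda - \eps)n > a n$ by conclusion (2). The hypothesis $\eps < A/3$ is precisely what is required to apply conclusion (3), giving $(Z_n x | y)_o \leq \eps n < a n$ for all $n \geq N_0$, where $N_0$ depends only on $\eps$ and $A$ (through the inequality $\lfloor An \rfloor > 3\eps n + 2$), not on $x$ or $y$. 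Setting $C_n := 3 C'_n + C''_n$ for $n \geq N_0$ and $C_n := 1$ for $n < N_0$ preserves summability, and gives the common bound needed for all three estimates.

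I do not anticipate any genuine obstacle in this proof: the difficult geometric content has already been absorbed into Lemma \ref{box} and Lemma \ref{lem:boite}, and the present proposition is essentially a probabilistic repackaging via a union bound. The only mild care to take is that the constants $C'_n$, $C''_n$, and the threshold $N_0$ are all uniform in $x, y \in \overline X$, which is exactly what Proposition \ref{prop:cplcvg} and Lemma \ref{lem:boite} are designed to give.
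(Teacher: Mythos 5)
Your proposal is correct and follows essentially the same route as the paper: choose $A$ from Corollary \ref{SSSgrowth} and $\eps<\min(A/3,\lambda/2)$, take a union bound over the four bad events supplied by Proposition \ref{prop:cplcvg} and Corollary \ref{SSSgrowth}, and read off the conclusions of Lemma \ref{lem:boite} with $a\in(\eps,\lambda-\eps)$. Your treatment is in fact slightly more scrupulous than the paper's, in making explicit the identity $(Z_no|Z_nx)_o=(o|x)_{Z_n^{-1}o}$ and in absorbing the ``$n$ sufficiently large'' caveat of conclusion (3) by setting $C_n=1$ for small $n$.
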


\begin{proof}
Choose first $A$ as in Corollary \ref{SSSgrowth}, and the sequence $(C'_n)$ accordingly. Now choose $\eps<\min(A/3,\lambda/2)$, then apply Proposition~\ref{prop:cplcvg} and obtain again some summable sequence $(C'_n)$ which we can assume to be the same as the previous one. It follows that the union  of the four events defined by the inequalities
$S(n)< A n$;
 $|h_x(Z_n^{-1} o) - n\lambda |\geq \eps n;$
$|d(Z_n o, o) - n\lambda |\geq \eps n;$ and  $|h_y(Z_n o) - n\lambda |\geq \eps n$ 
has a probability which is bounded above by $4C'_n$. Letting $C_n=4C'_n$ we note that this is still a summable sequence. By Lemma \ref{lem:boite}, if $Z_n$ is not in this union then we have  $(Z_no|Z_n x)_o\geq (\lambda-\eps) n$, $(Z_no|y)_o\leq \eps n$ and $(Z_n x| y)_o\leq \eps n$. 

Therefore choosing $a\in (\eps, \lambda-\eps)$ guarantees that the union of the three  events $(Z_no|Z_n x)_o\leq an$, $(Z_no|y)_o\geq an$, $(Z_n x| y)_o\geq an$ have probability bounded by $C_n$, which completes the proof. 

\end{proof}

\subsection{Central Limit Theorem}\label{Sect: CLT}

We can now turn to the proof of our main theorem. 

 We will need the following few definitions.
Let $G$  act continously on a compact metrizable space $Z$, and $E$ be a real vector space.

\begin{definition}
Let $\sigma:G\times Z\to E$ a continous cocycle. 
\begin{itemize}
\item We say that $\sigma$ has \emph{constant drift} if $\int_G \sigma(g,x) d\mu(g)$ does not depend on $x\in Z$. The \emph{average} of $\sigma$ is then  $\int_G \sigma(g,x) d\mu(g)$
\item We say that $\sigma$ is \emph{centerable} if there exists a bounded Borel  map $\psi : Z\to E$   and a cocycle $\sigma_0:G\times Z\to E$  with constant drift such that for every $(g,x)\in G\times Z$ 
\begin{enumerate}
\item 
$\sigma(g,x)=\sigma_0(g,x)+\psi(x)-\psi(gx)$
\item $\sigma_0$ has constant drift.
\end{enumerate}
The \emph{average} of $\sigma$ is then defined as the average $\sigma_\mu = \int_G \sigma_0(g,x) d\mu(g)$ of $\sigma_0$.
\end{itemize}
\end{definition}

If $\nu$ is a stationary measure on $Z$, then the average of a centerable cocycle does not depend on the choice of $\sigma_0$ and $\psi$, as we have 
$\int _{G\times Z} \sigma(g,x) d\mu(g)d\nu(x) = \sigma_\mu +\int_Z \psi(x) d\nu(x)- \int_{G\times Z} \psi(gx)d\mu(g)d\nu(x) = \sigma_\mu $ by stationarity of $\nu$.

Even though the Central Limit Theorem we are aiming for is one-dimensional, in order to treat the case when $X$ is reducible, we are led to study  a multidimensional version of it. Let us first introduce the relevant notations.
Let $E$ be a finite-dimensional real vector space. We denote $S^2E$ the space of \emph{symmetric 2-tensors} on $E$: it is the subspace of $E\otimes E$ which is invariant by the flip $x\otimes y\mapsto y\otimes x$. This space can be identified to  quadratic forms on $E^*$, by the following formula: $(x\otimes y+y\otimes x) (f)=2f(x)f(y)$. In particular, for $v\in E$, we write 
 $v^2\in S^2E$ for $v\otimes v$, that is, the element defined by $v^2(f)=f(v)^2$ for $f\in E^*$.
  If $\Phi\in S^2E$ is non-negative then one can define the multidimensional centered Gaussian law $\mathcal N(0,\Phi)$ of covariance 2-tensor $\Phi$. One possible definition of this law is as follows: if $X$ is a random vector then $X$ follows the law $\mathcal N(0,\Phi)$ if and only if for every $f\in E^*$, $f(X)$ follows a (one-dimensional) centered Gaussian law of variance $\Phi(f)$. 

The following theorem is an application of the Central Limit Theorem for martingales \cite{Brown}, \cite[Theorem 3.4]{BQLinear}. 

\begin{theorem}\label{thm:critereTCL}
Let $G$ be a contable group acting by homeomorphisms on a compact metrizable  space $Z$. Let $E$ be a finite-dimensional real vector space and $\sigma:G\times Z\to E$ be a continous cocycle. Assume that $\sigma$ is centerable with average $\sigma_\mu$, that $\int_G sup_{x\in Z} \Vert \sigma(g,x)\Vert^2 d\mu(g)<+\infty$, and that there exists a unique $\mu$-stationary probability measure $\nu$ on $Z$.  

Then the random variable $\frac{\sigma(Z_n,x)-n\sigma_\mu}{\sqrt n}$ converges in law to a Gaussian law.

Furthermore if we write $\sigma(g,z)=\sigma_0(g,z)+\psi(z)-\psi(gz)$ with $\psi$ bounded and $\sigma_0$ with constant drift, then the  covariance 2-tensor of the limit law is
$$\int_{G\times Z} (\sigma_0(g,x)-\sigma_\mu)^2 d\mu(g) d\nu(x)$$
\end{theorem}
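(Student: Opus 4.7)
The plan is to deduce the statement from the martingale central limit theorem of Brown~\cite{Brown}, following the strategy of \cite[Theorem 3.4]{BQLinear}. I would proceed in four steps.

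First, I would absorb the coboundary. Writing $\sigma(g,z) = \sigma_0(g,z) + \psi(z) - \psi(gz)$ and iterating the cocycle relation gives
\[
\sigma(Z_n,x) - n\sigma_\mu \;=\; \bigl(\sigma_0(Z_n,x) - n\sigma_\mu\bigr) + \bigl(\psi(x) - \psi(Z_n x)\bigr).
\]
Since $\psi$ is bounded, dividing the last parenthesis by $\sqrt n$ gives a quantity tending to $0$ uniformly, so it suffices to prove the theorem in the case $\sigma = \sigma_0$ has constant drift $\sigma_\mu$.

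Second, I would switch from the left to the right random walk. For each fixed $n$, the element $Z_n = g_1\cdots g_n$ has the same distribution as $\tilde Z_n := g_n\cdots g_1$, so $\sigma(Z_n,x)$ and $\sigma(\tilde Z_n,x)$ have the same law and it suffices to prove the CLT for the latter. The reason is that the cocycle identity telescopes pleasantly along the right walk:
\[
\sigma(\tilde Z_n,x) \;=\; \sum_{k=1}^n \sigma(g_k,\tilde Z_{k-1} x).
\]

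Third, with the filtration $\mathcal F_k := \sigma(g_1,\ldots,g_k)$, set $D_k := \sigma(g_k,\tilde Z_{k-1} x) - \sigma_\mu$. The constant-drift hypothesis gives $\mathbb E[D_k\mid \mathcal F_{k-1}] = \int_G\sigma(g,\tilde Z_{k-1}x)\,d\mu(g) - \sigma_\mu = 0$, so $(D_k)$ is a martingale-difference sequence, bounded in $L^2$ thanks to the hypothesis $\int_G \sup_{z}\|\sigma(g,z)\|^2\,d\mu(g) < \infty$. Its partial sum is exactly $\sigma(\tilde Z_n,x) - n\sigma_\mu$, reducing the problem to a multidimensional martingale CLT.

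Fourth, I would verify the hypotheses of Brown's martingale CLT. The key point is the almost-sure convergence of the conditional covariance tensors
\[
\frac1n\sum_{k=1}^n \mathbb E[D_k\otimes D_k\mid \mathcal F_{k-1}] \;=\; \frac1n\sum_{k=1}^n \Phi(\tilde Z_{k-1}x), \qquad \Phi(z) := \int_G (\sigma(g,z) - \sigma_\mu)^{2}\,d\mu(g),
\]
where $\Phi$ is a bounded continuous $S^2E$-valued function on $Z$. Uniqueness of the $\mu$-stationary measure $\nu$ makes the Markov chain $(\tilde Z_k x)_k$ on $Z$ uniquely ergodic, so for every starting point $x$ these Cesàro averages converge to $\int_Z \Phi\,d\nu = \int_{G\times Z}(\sigma_0(g,z) - \sigma_\mu)^{2}\,d\mu(g)\,d\nu(z)$, yielding both the convergence in law and the announced covariance tensor. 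The Lindeberg condition reduces to the $L^2$-bound on $\|D_k\|$ via a routine truncation argument.

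The main obstacle I anticipate is precisely the convergence of the conditional covariances for every (rather than $\nu$-almost every) starting point $x$; this is where unique ergodicity, i.e. the uniqueness of $\nu$, is used essentially, and it is also the only place where the continuity of $\sigma$ (hence of $\Phi$) enters, so some care is needed to make sure the ergodic-theoretic input applies to the deterministic initial condition used in the statement.
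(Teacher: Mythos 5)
Your argument is correct and is essentially the paper's own route: the paper does not reprove this statement but cites it as \cite[Theorem 3.4]{BQLinear} (an application of Brown's martingale CLT), and your four steps --- stripping the bounded coboundary, passing to the reversed walk $g_n\cdots g_1$ to telescope the cocycle into martingale differences, and invoking unique ergodicity via Breiman's law of large numbers to control the conditional covariances for every starting point --- reconstruct exactly that proof. Your use of only the boundedness (not continuity) of $\psi$ in the first step matches the remark the paper makes immediately after the theorem.
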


\begin{remark}
In \cite{BQLinear} the map $\psi$ is assumed to be continuous. However, in the proof of \cite[Theorem 3.4]{BQLinear} only the boundedness of $\psi$ is used.
\end{remark}

Now let us go back to our specific situation. We continue to assume that $X$ is a finite-dimensional CAT(0) cubical complex and that $G\to \Aut(X)$ is an essential and nonelementary action. 

\begin{lemma}\label{lem:centerable}
Assume that $X$ is irreducible, and that the action of $G$ on $X$ is nonelementary and essential. Assume also that $\mu$ has  finite second moment. Then the cocycle $\sigma:G\times \overline X \to \R$ defined by $\sigma(g,x)=h_{x}(g^{-1}o)$ is centerable. Its average is the drift $\lambda$.

More precisely we have $\sigma(g,x) = \sigma_0(g,x)+\psi(x)-\psi(gx)$ with $\psi(x) =-2\int_{\partial X} (x|y)_o d \check{\nu}(y)$ and with $\sigma_0$ of constant drift.
\end{lemma}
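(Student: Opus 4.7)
The plan is to set $\sigma_0(g,x) := \sigma(g,x) + \psi(gx) - \psi(x)$, so that the stated decomposition becomes tautological, and verify that (a) $\psi$ is a bounded Borel function on $\overline X$, and (b) $\int_G \sigma_0(g,x)\,d\mu(g)$ is independent of $x$ and equals $\lambda$. Measurability of $\psi$ is immediate from the continuity of $(x|y)_o$ on $\overline X\times\overline X$. The main obstacle will be the uniform boundedness of $\psi$; once that is in hand, the constant-drift property reduces to a cocycle calculation.

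For the boundedness, I would apply Proposition~\ref{prop:integrability} to the reversed walk $\check Z_n$, which satisfies the same hypotheses (the action is unchanged and $\check\mu$ has the same second moment as $\mu$). Since $\check\nu$ is $\check\mu$-stationary, for any $n\geq 1$, if $\check\eta' \sim \check\nu$ is independent of $\check Z_n$ then $\check Z_n\check\eta'$ has law $\check\nu$. The third inequality of Proposition~\ref{prop:integrability} applied to the $\check\mu$-walk (with $\check\eta'$ in the role of $x$ and $x\in\overline X$ in the role of $y$), conditioned on $\check\eta'$ and then averaged, therefore yields
\[
\check\nu\bigl(\{y \in \partial X : (x|y)_o \geq an\}\bigr) \,\leq\, C_n
\]
uniformly in $x\in \overline X$, for some $a>0$ and summable $(C_n)$. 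Integrating this tail estimate gives $\int_{\partial X}(x|y)_o\,d\check\nu(y) \leq a(1+\sum_{n\geq 1}C_n)$, a bound independent of $x$, so $\psi$ is bounded.

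For the constant drift, I would use the identity $2(gx|y)_o = \sigma(g,x) + 2(x|g^{-1}y)_o - \sigma(g^{-1},y)$, obtained from the first bullet of Remark~\ref{rem:Gproduct} together with $\sigma(g^{-1},y) = h_y(go)$. Substituting it into
\[
\int_G[\psi(gx)-\psi(x)]\,d\mu(g) = -\int_G\!\int_{\partial X}\bigl[2(gx|y)_o - 2(x|y)_o\bigr]\,d\check\nu(y)\,d\mu(g),
\]
the change of variable $h=g^{-1}$ (so $d\mu(g) = d\check\mu(h)$) combined with the $\check\mu$-stationarity of $\check\nu$ yields $\int_G\!\int_{\partial X} 2(x|g^{-1}y)_o\,d\check\nu(y)\,d\mu(g) = \int 2(x|y)_o\,d\check\nu(y)$, cancelling the analogous remaining term. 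After simplification one obtains
\[
\int_G \sigma_0(g,x)\,d\mu(g) \,=\, \int_G\!\int_{\partial X}\sigma(h,y)\,d\check\nu(y)\,d\check\mu(h),
\]
which is manifestly independent of $x$. This common value is the drift $\check\lambda = \lambda$ of the $\check\mu$-walk (Remark~\ref{Rem: ch drift is drift}), identified as the integral of the horofunction cocycle against $\check\mu\otimes\check\nu$ via the $L^1$-averaged form of Proposition~\ref{prop:cplcvg}. All uses of Fubini are justified by the finite first moment of $\mu$ (which bounds $|\sigma|$ by $d(o,g\cdot o)$) and the boundedness of $\psi$ established above.
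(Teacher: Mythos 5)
Your proposal is correct and follows essentially the same route as the paper: boundedness of $\psi$ via the tail estimates of Proposition~\ref{prop:integrability} (you unfold the argument the paper delegates to Benoist--Quint), and constant drift by integrating the Gromov-product identity of Remark~\ref{rem:Gproduct} against $d\mu\otimes d\check\nu$ and invoking stationarity of $\check\nu$ together with the identification $\int\sigma\,d\check\mu\,d\check\nu=\lambda$. The only cosmetic difference is that the paper derives this last identification from \cite[Proposition 9.4]{FLM} and Birkhoff's theorem rather than quoting it from the proof of Proposition~\ref{prop:cplcvg}.
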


\begin{proof}
Let $\check \nu$ be the $\check\mu$-stationary measure on $\overline X$. 
Using Proposition \ref{prop:integrability} we deduce as in \cite[Proposition 4.2]{BenoistQuintHyp} (replacing \cite[Lemma 4.5]{BenoistQuintHyp} by Proposition \ref{prop:integrability}) that $\psi(x)=-2\int_{\overline X} (x|y)_o d \check{\nu}(y)$ is finite for every $x\in \overline X$, and furthermore that $\sup_x \psi(x) <+\infty$, that is, $\psi$ is bounded. We also note that $\psi$ is Borel by Fubini's Theorem. 

Now for all $g\in G$ and $x,y\in \overline X$, by Remark \ref{rem:Gproduct}
we have
$$h_x(g^{-1}o) = -2(x|g^{-1}y)_o+2(gx|y)_o+h_y(go)$$ 

Now, by \cite[Proposition 9.4]{FLM}, we have $\lim_n \frac 1n h_\xi(Z_n o) = \lambda$ almost surely, for every $\xi \in \partial X$. Using Birkhoff's Ergodic Theorem together with the cocycle identity, as in the proof of \cite[Theorem 9.3]{FLM}, we also get that 
$\lim_n \frac 1n h_\xi(Z_n o)  = \int_{G\times \partial X} \sigma(g^{-1},x) d\mu(g) d\check \nu(x) $, hence that this integral is equal to $\lambda$. 
Integrating the previous equality on $G\times \partial X$ for the measure $d\mu(g)d\check \nu(y)$ we get 
$\int_G \sigma(g,x) d\mu(g) = \psi(x)-\int_G \psi(gx) d\mu(g)+\lambda$

This means that  the cocycle $\sigma_0:G\times\overline X \to \R$ defined by the formula $\sigma_0(g,x)=\sigma(g,x)-\psi(x)+\psi(gx)$ has the property that if $x\in \overline X$  then
$$\int_G \sigma_0(g,x)d\mu(g)= \lambda.$$
\end{proof}

\begin{theorem}\label{thm:gaussian}
Let $G$ have a nonelementary and essential action on the finite dimensional CAT(0) cubical complex $X$.  Assume that the action of $G$ on $X$ stabilizes each irreducible factor of $X$.
Assume also that $\mu$ has finite second moment.
There exists a Gaussian law $N_\mu$ on $\R$ such that 
$$\frac{d(Z_n o,o)-n\lambda}{\sqrt n}\to N_\mu$$ 
in distribution. 
Furthermore, the variance of this Gaussian law is given by 

$$\int_{G\times \partial X} (h_\xi(g^{-1}o)-\psi(x)+\psi(gx)-\lambda)^2 d\mu(g) d\nu(x)$$

where $\psi(x) =-2\int_{\partial X} (x|y)_o d \check{\nu}(y)$.
\end{theorem}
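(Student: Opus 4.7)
The strategy is to apply the abstract martingale-based CLT criterion of Theorem \ref{thm:critereTCL} to the horofunction cocycle $\sigma(g,x)=h_x(g^{-1}o)$, and then transfer the resulting Gaussian convergence from the cocycle to the combinatorial distance $d(Z_no,o)$. I first handle the irreducible case, then pass to the reducible case via a multidimensional version of the same scheme.

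\textbf{Irreducible case.} I take $Z=\overline X$, $E=\R$, and the continuous cocycle $\sigma(g,x)=h_x(g^{-1}o)$ (Remark \ref{rem: horo is cocycle}). All hypotheses of Theorem \ref{thm:critereTCL} are in place: the $\mu$-stationary measure $\nu$ is unique and supported on $\partial X$ by Theorem \ref{thm:uniquestat}; the bound $|\sigma(g,x)|\leq 3d(go,o)$ together with the finite second moment of $\mu$ yields $\int_G \sup_x|\sigma(g,x)|^2\,d\mu(g)<\infty$; and Lemma \ref{lem:centerable} provides centerability with average $\lambda$ via the bounded Borel corrector $\psi$. The theorem then gives, for every $x\in\partial X$, convergence $\frac{h_x(Z_n^{-1}o)-n\lambda}{\sqrt n}\to N_\mu$ in law, with variance equal to the integral in the statement.

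\textbf{Transfer from cocycle to distance.} Using the identity $d(Z_no,o)-h_x(Z_n^{-1}o)=2(Z_n^{-1}o|x)_o$, it suffices to show that $(Z_n^{-1}o|x)_o$ is bounded in probability for a suitable $x\in\partial X$. I pick $x$ outside the (at most countable) atoms of $\check\nu$; such $x$ exist because $\check\nu$ is supported on the uncountable $\partial_{reg}X$ by Theorem \ref{thm:bdrCX}. Since $Z_n^{-1}$ has the same law as the $\check\mu$-walk at time $n$, the main result of \cite{FLM} gives $Z_n^{-1}o\to\check\eta$ in distribution with $\check\eta$ of law $\check\nu$, and continuity of the median (Remark \ref{Rem: MedianAxioms}) yields $(Z_n^{-1}o|x)_o\to(\check\eta|x)_o<\infty$ in distribution, hence $(Z_n^{-1}o|x)_o=O_P(1)$. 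Slutsky's theorem then gives $\frac{d(Z_no,o)-n\lambda}{\sqrt n}\to N_\mu$.

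\textbf{Reducible case.} Decompose $X=X_1\times\cdots\times X_D$; by hypothesis $G$ preserves each $X_i$. For each $i$ the induced $G$-action is either elementary with bounded orbit at $o_i$ (contributing only $O(1)$ to $d(Z_no,o)$) or, after passing to the $G$-essential core, essential and nonelementary. Since the increments $g_n$ are common to all factors, I apply Theorem \ref{thm:critereTCL} in its multidimensional form to the $\R^{|I|}$-valued cocycle $\sigma_{\mathrm{tot}}(g,(x_i)_i)=(h_{x_i}(g^{-1}o_i))_i$ on $\prod_{i\in I}\partial X_i$ with the product stationary measure, which is unique by componentwise uniqueness from Theorem \ref{thm:uniquestat}. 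Joint Gaussian convergence of the rescaled vector, combined with the componentwise transfer from the previous paragraph and the linear functional $(s_i)\mapsto\sum_i s_i$, produces the one-dimensional CLT for $d(Z_no,o)=\sum_i d_i(Z_no_i,o_i)+O(1)$, with variance obtained by evaluating the multidimensional covariance tensor on this sum.

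The main obstacle is the transfer step: showing $(Z_n^{-1}o|x)_o=O_P(1)$ for well-chosen $x$. It rests on combining the almost-sure regular-boundary convergence of the $\check\mu$-walk from \cite{FLM} with an atom-avoidance argument for $\check\nu$ and continuity of the Gromov product on $\overline X$.
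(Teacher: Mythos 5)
Your overall strategy matches the paper's: apply the martingale CLT criterion of Theorem \ref{thm:critereTCL} to the horofunction cocycle $\sigma(g,x)=h_x(g^{-1}o)$ (vector-valued over the irreducible factors in the reducible case), using Lemma \ref{lem:centerable} for centerability and Theorem \ref{thm:uniquestat} for uniqueness of the stationary measure, and then transfer from the cocycle to $d(Z_no,o)$ via the identity $d(Z_no,o)-h_x(Z_n^{-1}o)=2(Z_n^{-1}o\,|\,x)_o$. Your transfer step is a mild variant of the paper's: you deduce tightness of $(Z_n^{-1}o\,|\,x)_o$ from the fact that $Z_n^{-1}$ has the law of the $\check\mu$-walk at time $n$ and conclude by Slutsky, whereas the paper passes to the reversed products $g_n\cdots g_1$ (equidistributed with $Z_n$) so that $g_1^{-1}\cdots g_n^{-1}o$ converges \emph{almost surely}. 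Both rest on the same facts (non-atomicity of $\check\nu$, continuity and a.s.\ finiteness of the Gromov product at the limit point); note that since $\check\nu$ is non-atomic there are no atoms to avoid, so any $x$ with $(\check\eta\,|\,x)_o<\infty$ a.s.\ works.

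There are, however, two problems in your reducible case. First, you invoke ``the product stationary measure, which is unique by componentwise uniqueness.'' The product $\nu_1\times\cdots\times\nu_D$ of the factorwise stationary measures is in general \emph{not} $\mu$-stationary for the diagonal action (the averaging over $g$ does not commute with taking products), and uniqueness of the joint stationary measure is not a formal consequence of componentwise uniqueness. The correct object is the unique stationary measure $\nu$ on $\overline X=\overline{X_1}\times\cdots\times\overline{X_D}$ supplied directly by Theorem \ref{thm:uniquestat} applied to $X$ itself; this is the measure against which the covariance $2$-tensor, and hence the variance in the statement, must be integrated, so substituting the product measure would give the wrong variance formula. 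Second, your dichotomy ``each factor action is either elementary with bounded orbit or essential and nonelementary'' is false as stated: an elementary action may well have unbounded orbits (and even positive drift), so the elementary branch cannot be dismissed as an $O(1)$ contribution. This detour is also unnecessary: under the hypotheses (essential, nonelementary, factor-preserving), the induced action on each irreducible factor is again essential and nonelementary, which is exactly what the paper uses to run Lemma \ref{lem:centerable} in every coordinate.
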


\begin{proof}
Let $X= X_1\times \cdots \times X_D$ be the irreducible decomposition of $X$ and note that $\~X = \~X_1\times \cdots \times \~X_D$.
For each $1\leq i\leq D$, the  action of $G$ on $X_i$ is again nonelementary and essential, so it has  positive drift $\lambda_i$ \cite[Theorem 1.2]{FLM}. The total drift for the action of $G$ on $X$ is then $\lambda=\lambda_1+\dots+\lambda_D$. Now for each $i$ there is a cocycle $\sigma_i:G\times \~X_i\to \R$ defined by $\sigma_i(g,x)=h_{x_i}(g^{-1}o)$ (where $x_i$ is the $i$th-coordinate of $x$ and so the horofunction is understood to be computed with only the $i$th-coordinate). Define $\~ \sigma:G\times \~X\to \R^D$ by $\~ \sigma=(\sigma_1,\dots,\sigma_D)$. By Lemma~\ref{lem:centerable} we get that $\sigma_i(g,x)$ is centerable for every $i$. Therefore $\~ \sigma$ is centerable, with average $\bar\lambda=(\lambda_1,\dots,\lambda_D)$. Write $\~ \sigma(g,x)=\~\sigma_0(g,x)+\~\psi(x)-\~\psi(gx)$, coordinate-wise with $\~\psi$ bounded and measurable  and  $\~\sigma_0$ with constant drift. Recall that   $\nu$ is the unique stationary measure on $\overline X$ (by Theorem \ref{thm:uniquestat}). It follows from Theorem \ref{thm:critereTCL} and Lemma \ref{lem:centerable}, using also that $\mu$ has a finite second moment (and the fact that $|h_x(g^{-1}o)|\leq d(o,g^{-1}o)$ for every $x\in \overline X$)
that $\frac{\tilde \sigma(Z_n,x)-n\bar \lambda}{\sqrt n}$ converges in law to a Gaussian law whose covariance 2-tensor is
$$\int_{G\times \partial X} (\~\sigma_0(g,\xi)-\bar\lambda)^2 d\mu(g) d\nu(\xi)$$

Let $f:\R^d\to \R$ be the linear form $(u_1,\dots,u_D)\mapsto u_1+\dots+u_D$. Write $ \sigma=f\circ \~\sigma$, $ \sigma_0=f\circ \~\sigma_0$, and $ \psi=f\circ \~\psi$. Note that $ \sigma(g,\xi)=h_\xi(g^{-1}o)$ for $\xi\in \partial X$ and $g\in G$ (so that the notation is consistent with the previous one).

Since the linear projection of a multidimensional Gaussian is again a Gaussian, we get that for $g\in G$ and $x=(x_1,\dots,x_D)\in \overline X$, the ratio $\frac{ \sigma(Z_n,x)-n\lambda}{\sqrt n}$ converges to a centered Gaussian law (on $\R$).
By Remark \ref{rem:Gproduct} we get that for every $\xi\in \partial X$, for every $g_1,\dots,g_n\in G$, 
$$d(g_n\dots g_1 o,o)- \sigma(g_n\dots g_1,\xi)= 2(g_1^{-1}\dots g_n^{-1} o|\xi)_o$$
Since $g_1^{-1}\dots g_n^{-1}o$ converges almost surely to some random point  whose distribution $\check \nu$ is non-atomic, and since $g_n\dots g_1$ has the same distribution as $Z_n=g_1\dots g_n$ (namely $\mu^{*n}$) the sequences $\left( \frac{d(Z_no,o)-n\lambda}{\sqrt n}\right)_{n\geq 1}$ and $\left(\frac{ \sigma(g_n\dots g_1,\xi)-n\lambda}{\sqrt n}\right)$ have the same limit distribution. The variance of this Gaussian law is given by $\int_{G\times \partial X} (\sigma_0(g,\xi))-\lambda)^2 d\mu(g) d\nu(\xi)$, which is precisely $\int_{G\times \partial X} (h_\xi(g^{-1}o)-\psi(\xi)+\psi(g\xi)-\lambda)^2 d\mu(g) d\nu(x)$, where $\psi$ is given by Lemma~\ref{lem:centerable}.
\end{proof}

\subsection{Non-degeneracy of the limit law}

Our goal in this section is to complete the proof of Theorem~\ref{maintheorem}, that is, to prove that the limit Gaussian law is non-degenerate.  By Theorem~\ref{thm:gaussian}, this amounts to proving the following.

\begin{proposition}\label{prop:non-degeneracy}
If the action of $G$ on $X$ is nonelementary and essential and $\<\Supp(\mu)\>^+=G$ then 
$$\int_{G\times \partial X} (h_\xi( g ^{-1}o)+\psi(\xi)-\psi(g \xi) -\lambda)^2 d\mu(g) d\nu(\xi)>0.$$
\end{proposition}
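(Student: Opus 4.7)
My plan is to proceed by contradiction, exploiting the fact that the cocycle $\sigma_0$ vanishes at the identity together with the hypothesis $\langle\Supp(\mu)\rangle^+ = G$.

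Suppose the integral equals zero. Up to an irrelevant sign change (which disappears after squaring), the integrand is $(\sigma_0(g,\xi)-\lambda)^2$, where $\sigma_0$ is the cocycle of constant drift $\lambda$ produced in Lemma~\ref{lem:centerable}. Vanishing of the $L^2$-norm yields
$$\sigma_0(g,\xi) = \lambda \qquad \text{for } (\mu\otimes\nu)\text{-a.e.\ }(g,\xi).$$

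The first step is to propagate this equality via the cocycle identity $\sigma_0(g_1 g_2,\xi)=\sigma_0(g_1,g_2\xi)+\sigma_0(g_2,\xi)$ together with the $\mu$-stationarity of $\nu$. Since $g\xi$ has law $\nu$ whenever $(g,\xi)\sim\mu\otimes\nu$, a straightforward induction on $n$ shows that
$$\sigma_0(h,\xi)=n\lambda \qquad \text{for } (\mu^{*n}\otimes\nu)\text{-a.e.\ }(h,\xi),\ n\geq 1.$$
Since $\mu^{*n}$ is purely atomic, Fubini upgrades this to: for every $h\in\Supp(\mu^{*n})$, one has $\sigma_0(h,\xi)=n\lambda$ for $\nu$-a.e.\ $\xi$.

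The contradiction then comes from two easy observations. First, the normalization $h_\xi(o)=0$ forces $\sigma_0(e,\xi)=h_\xi(o)\pm\psi(\xi)\mp\psi(\xi)=0$ identically. Second, the generation hypothesis $\langle\Supp(\mu)\rangle^+=G$ guarantees $e\in\Supp(\mu^{*N})$ for some $N\geq 1$: pick any $g\in\Supp(\mu)$; since $g^{-1}\in G$ can be written as a positive product $h_1\cdots h_m$ with $h_i\in\Supp(\mu)$, we get $e=g h_1\cdots h_m\in\Supp(\mu)^{m+1}$. Applying the propagation statement at $h=e$ and this $N$ gives $0=\sigma_0(e,\xi)=N\lambda$ for $\nu$-a.e.\ $\xi$, so $\lambda=0$, in direct contradiction with $\lambda>0$ established in \cite[Theorem~1.2]{FLM}.

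The only non-routine step in this plan is the propagation induction, whose correctness rests entirely on the stationarity of $\nu$; the remaining obstacle is essentially bookkeeping. The punchline is the tension between the linear growth $\sigma_0(Z_n,\cdot)\sim n\lambda$ along typical trajectories and the identity vanishing of $\sigma_0$ at $e$, which must appear among those trajectories because the semigroup generated by $\Supp(\mu)$ is all of $G$.
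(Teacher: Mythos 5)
Your proof is correct, but it takes a genuinely different and in fact shorter route than the paper's. The paper also begins by extracting from the vanishing of the integral that $h_\xi(g^{-1}o)-\lambda$ equals a coboundary of the bounded function $\psi$, but it then only retains the weaker consequence $|h_\xi(g^{-1}o)-n\lambda|\leq C$ for $g\in\Supp(\mu^{*n})$, upgraded from ``a.e.\ $\xi$'' to ``all $\xi\in\Supp(\nu)$'' by a closedness argument; it evaluates this along the powers $g^{-n}$ of a \emph{regular} isometry $g\in\Supp(\mu^{*m})$ at its attracting fixed point $\xi^+\in\Supp(\nu)$, converts the horofunction growth into the translation length via Lemma~\ref{Lem: horo to translation length}, and derives the incompatible identities $\ell(g)=m\lambda$ and $\ell(g)=(m+N)\lambda$. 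You instead keep the exact identity $\sigma_0(g,\xi)=\lambda$, propagate it through convolution powers using only the cocycle relation and stationarity, and evaluate at the single atom $h=e$, where $\sigma_0(e,\cdot)\equiv 0$; this bypasses regular isometries, Lemmas~\ref{lem:regularcontracting} and~\ref{Lem: horo to translation length}, and the support-of-$\nu$ upgrade entirely, while hinging on the same aperiodicity fact $e\in\Supp(\mu^{*N})$ and on $\lambda>0$. Your propagation step is sound: under $\mu\otimes\mu^{*(n-1)}\otimes\nu$ the pair $(g_1,h'\xi)$ has law $\mu\otimes\nu$ by iterated stationarity, and $\mu^{*n}$ is purely atomic, so Fubini applies at the atom $e$. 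One small caveat: the parenthetical ``disappears after squaring'' is not the right justification for ignoring the sign discrepancy between the Proposition's integrand (which carries $+\psi(\xi)-\psi(g\xi)$) and $\sigma_0$ (which carries $-\psi(\xi)+\psi(g\xi)$, matching Theorem~\ref{thm:gaussian}); squaring does not reconcile the two. What actually saves you is that $h_\xi(g^{-1}o)+\psi(\xi)-\psi(g\xi)$ is \emph{also} a cocycle (namely $\sigma$ plus the coboundary of $-\psi$) vanishing at $g=e$, so your argument runs verbatim with either sign convention.
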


In order to prove this we need to use the following definition.

\begin{definition}
Let $g$ be an isometry of an essential CAT(0) cubical complex $X_1\times \dots\times X_D$, where each $X_i$ is irreducible.

We say that $g$ is \emph{regular} if it acts as a regular rank-1 isometry on $X_i$, for every $1\leq i \leq D$.
\end{definition}

By \cite[Theorem 11.7]{FLM}, under the assumptions of Proposition~\ref{prop:non-degeneracy}, the group $G$ always contains regular isometries.

\begin{lemma}\label{lem:regularcontracting}
Let $X=X_1\times \dots\times X_D$ be the irreducible decomposition for $X$ and $g$ be a regular isometry. Then $g$ has exactly $2^D$-fixed points $\fix(g) \subset \overline X$, with exactly one attracting $\xi^+\in \fix(g)$ and one repelling $\xi^-\in \fix(g)$. Specifically, for every $x\in \overline X\setminus\fix(g)$ we have $\Lim{n\to \pm\8}{}\,  g^n x = \xi^\pm$. Furthermore, $I(\xi^-,\xi^+)\cap X$ is the set of  vertices that lie on some combinatorial axis of $g$, and if $o\in I(\xi^-,\xi^+)\cap X$ then $m(g^{-1}o,o, \xi^+) = o$. 
\end{lemma}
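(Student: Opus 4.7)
The plan is to reduce everything to the irreducible factors of $X$ and then combine coordinate-wise. Since every half-space of $X$ is pulled back from a unique factor, we have $\overline X = \overline{X_1}\times \cdots\times \overline{X_D}$, the median on $\overline X$ is computed coordinate-wise, and by the definition of regularity the isometry $g$ preserves each $X_i$ and acts on it as a regular rank-1 isometry. So the question decomposes into $D$ irreducible subproblems that are then reassembled.

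On each irreducible factor $X_i$, I would invoke \cite[Proposition 4.3]{BeyFio} (already used in Proposition~\ref{prop:nonelementCX}) to conclude that $g|_{X_i}$ has exactly two fixed points $\xi_i^+, \xi_i^-\in \partial_{reg}X_i$ exhibiting north--south dynamics: if $g^N h\subsetneq h$ are strongly separated for some $h\in\frakH_i$, then $\xi_i^+=\bigcap_n g^{nN}h$ and $\xi_i^-=\bigcap_n g^{-nN}h^*$, and $g^{\pm n} y\to\xi_i^\pm$ for every $y\in \overline{X_i}$ distinct from the opposite fixed point. Taking products, $\fix(g)$ consists of exactly the $2^D$ tuples with entries in $\{\xi_i^+,\xi_i^-\}$, and $\xi^\pm := (\xi_1^\pm,\dots,\xi_D^\pm)$ is the distinguished attracting/repelling pair to which generic orbits converge.

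For the interval description, $I(\xi^-,\xi^+)=\prod_i I(\xi_i^-,\xi_i^+)$. In each factor I would show that a vertex $v_i$ lies in $I(\xi_i^-,\xi_i^+)\cap X_i$ exactly when it lies on some combinatorial axis of $g|_{X_i}$: one direction is immediate from $g$-invariance of the axis and the fact that its two ends in the Roller boundary are $\xi_i^\pm$; for the converse I combine Haglund's combinatorial axis theorem \cite[Corollary 6.2]{Haglund} with the observation that every hyperplane separating $g^{-k}v_i$ from $g^k v_i$ must separate $\xi_i^-$ from $\xi_i^+$, since such a hyperplane is eventually skewered by a power of $g$ and hence comparable with the strongly-separated chain defining $\xi_i^\pm$. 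Concatenating coordinate-wise axes then yields a combinatorial axis of $g$ in $X$ through any vertex of $I(\xi^-,\xi^+)\cap X$.

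The median identity now reduces to coordinate-wise bookkeeping: on the axis through $o_i$, the three points $g^{-1}o_i,o_i,\xi_i^+$ appear in that order, forcing $m(g^{-1}o_i,o_i,\xi_i^+)=o_i$; taking the product of medians recovers $m(g^{-1}o,o,\xi^+)=o$. The main obstacle is the axis characterization, where one must control every hyperplane separating $g^{-k}o$ from $g^k o$ in terms of the strongly-separated skewering chain and rule out transverse hyperplanes drifting out of $I(\xi_i^-,\xi_i^+)$; once that is settled, the remaining steps are direct consequences of the product structure.
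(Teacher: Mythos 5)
The first half of your plan coincides with the paper's proof: both decompose $\overline X=\overline{X_1}\times\cdots\times\overline{X_D}$, invoke \cite[Proposition 4.3]{BeyFio} factor-by-factor to get the two fixed points $\xi_i^{\pm}$ with north--south dynamics, and assemble the $2^D$ fixed points and the distinguished pair $\xi^{\pm}$ by taking products. That part is fine.

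Where you diverge is the characterization of $I(\xi^-,\xi^+)\cap X$ as the union of combinatorial axes, and this is where your proposal has a genuine gap. The paper does not re-derive this from hyperplane combinatorics: it quotes \cite[Proposition 4.4]{BeyFio}, which says $o\in I(\xi^-,\xi^+)\cap X$ if and only if $d(o,go)=\ell(g)$, and then uses Haglund plus $\ell(g^n)=|n|\ell(g)$ to get the additivity $d(o,go)+d(go,g^no)=d(o,g^no)$, hence $m(o,go,g^no)=go$, hence (letting $n\to\infty$ and using continuity of the median) $m(o,go,\xi^+)=go$, and finally the desired identity by applying $g^{-1}$. Your replacement argument for the direction ``$v\in I(\xi_i^-,\xi_i^+)\cap X_i$ implies $v$ lies on an axis'' does not close. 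The obstruction to $v$ lying on an axis is a hyperplane $\^h$ separating $v$ from \emph{both} $g^{-1}v$ and $gv$ (a backtracking hyperplane), and to contradict $v\in I(\xi^-,\xi^+)$ you would need to show that such an $\^h$ has both $\xi^-$ and $\xi^+$ on the side opposite to $v$. Your stated observation controls instead the hyperplanes \emph{separating} $g^{-k}v$ from $g^kv$, which is the wrong family; and the claim that any such hyperplane ``is eventually skewered by a power of $g$ and hence comparable with the strongly-separated chain'' is itself only a sketch (a priori a hyperplane crossed between $g^{-k}v$ and $g^kv$ could have $g^m\^h=\^h$ or $g^m h$ facing $h^*$ for the relevant power $m$, and comparability with the defining chain of $\xi_i^{\pm}$ still requires an argument). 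You flag this as the ``main obstacle'' without resolving it. The fix is simply to use \cite[Proposition 4.4]{BeyFio} as the paper does, after which the median identity follows from the additivity of displacements along the axis together with continuity of the median on $\overline X^3$ (Remark \ref{Rem: MedianAxioms}), rather than from an informal ``order of points on the axis'' argument.
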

\begin{proof}

Let $X=X_1\times \dots\times X_D$, where each $X_i$ is irreducible. Let  $g=(g_1, \dots, g_D)$, where $g_i$ is regular rank-1. Using again \cite[Proposition 4.3]{BeyFio}  there are exactly 2 fixed points of $g_i$ in $\overline X_i$, call them $\xi_i^+$ and $\xi_i^-$, and furthermore $g_i^nx$ converges to $\xi_i^+$ for every $x_i\in\~X_i\setminus\{\xi_i^-\}$. Thus $\fix(g)$ is the set of $2^D$ elements whose coordinates are all the $\xi_i^\pm$.  Furthermore the above argument applied in each factor shows that $g_{\pm n} x\to (\xi_1^{\pm},\dots,\xi_D^{\pm})$ for every $x\not\in \fix(g)$. Let $\xi^+ = (\xi_1^+,\dots,\xi_D^+)$ and $\xi^- = (  \xi_1^-,\dots,\xi_D^-)$.

We now show if $I(\xi^-, \xi^+)\cap X$  is the set of  vertices that lie on some combinatorial axis of $g$.   By \cite[Proposition 4.4]{BeyFio} we have that $o \in I(\xi^-, \xi^+)\cap X$  if and only if $d(o, go) = \ell(g)$. Since $g^n$ is also  regular rank-1 with the same fixed points as $g$, it follows that $d(o, g^no) = \ell(g^n)= |n|\ell(g)$, for every $n\in \Z$ i.e. if $n>1$ then $d(o,go)+ d(go,g^no) = d(o, g^no)$ and $m(o,go,g^no) = go$. Therefore, $o\in I(\xi^-, \xi^+)\cap X$ if and only if $o$ belongs to a combinatorial axis for $g$. Taking the limit as $n\to \8$ we get that $m(o,go,\xi^+) = go$. Finally, applying $g^{-1}$ we deduce that  $m(g^{-1}o,o,\xi^+) = o$.\end{proof}

\begin{lemma}\label{Lem: horo to translation length}
Let $o\in X$ be any basepoint. If $g$ is a regular isometry, with attracting fixed point $\xi^+$, then $\lim\limits_n \frac{h_{\xi^+}(g^{-n} o)} n= \ell(g)$.
\end{lemma}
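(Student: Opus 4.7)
The plan is to first compute the horofunction when the basepoint lies on a combinatorial axis of $g$, and then to transfer the computation to the arbitrary basepoint $o$ using standard properties of horofunctions (the change-of-basepoint identity and the $1$-Lipschitz property).

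By Lemma \ref{lem:regularcontracting}, the element $g$ admits a combinatorial axis; fix a vertex $o'$ on this axis and, for this argument only, denote by $h_{\xi^+}^{o'}(a):=d(o',a)-2(a\,|\,\xi^+)_{o'}$ the horofunction computed with respect to the auxiliary basepoint $o'$. The axis is a bi-infinite combinatorial geodesic on which $g$ acts by translation of combinatorial length $\ell(g)$, and whose endpoints in $\overline X$ are the fixed points $\xi^{\pm}$ of $g$. For each $n\geq 1$, the point $o'$ lies on the sub-geodesic of the axis from $g^{-n}o'$ to $\xi^+$; since a combinatorial geodesic crosses each hyperplane at most once, no hyperplane can separate $o'$ simultaneously from $g^{-n}o'$ and from $\xi^+$. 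Hence $o'\in I(g^{-n}o',\xi^+)$, and combined with the trivial inclusions $o'\in I(g^{-n}o',o')\cap I(o',\xi^+)$ this yields $m(g^{-n}o',o',\xi^+)=o'$. In particular $(g^{-n}o'\,|\,\xi^+)_{o'}=d(o',o')=0$, and since $d(o',g^{-n}o')=n\,\ell(g)$, the definition gives
\begin{equation*}
h_{\xi^+}^{o'}(g^{-n}o') \;=\; d(o',g^{-n}o') \,-\, 2(g^{-n}o'\,|\,\xi^+)_{o'} \;=\; n\,\ell(g).
\end{equation*}

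To pass from $o'$ back to the original basepoint $o$, I would use two general facts. First, horofunctions based at different origins differ by an additive constant: $h_{\xi^+}^{o}(a)=h_{\xi^+}^{o'}(a)+h_{\xi^+}^{o}(o')$ for every $a$, which is immediate from rewriting horofunctions as limits of normalized distance functions. Second, $h_{\xi^+}^{o}$ is $1$-Lipschitz, so since $g$ acts by isometries,
\begin{equation*}
\bigl|h_{\xi^+}^{o}(g^{-n}o) \,-\, h_{\xi^+}^{o}(g^{-n}o')\bigr| \;\leq\; d(g^{-n}o,\,g^{-n}o') \;=\; d(o,o').
\end{equation*}
Combining these two facts with the axis computation yields $h_{\xi^+}^{o}(g^{-n}o)=n\,\ell(g)+O(1)$, and dividing by $n$ gives the announced limit $\ell(g)$. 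The only point that requires real work is the median identity $m(g^{-n}o',o',\xi^+)=o'$; this is the cubical-geometric core of the argument, and it reduces, as sketched above, to the fact that the axis is a combinatorial geodesic and so cannot cross any hyperplane twice. All subsequent manipulations are routine.
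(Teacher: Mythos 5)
Your proof is correct and follows essentially the same route as the paper: an exact computation $h_{\xi^+}(g^{-n}o')=n\ell(g)$ for a basepoint $o'$ on the axis via the median identity $m(g^{-n}o',o',\xi^+)=o'$, followed by a bounded (change-of-basepoint plus $1$-Lipschitz) correction for an arbitrary $o$. The only cosmetic difference is that you re-derive the median identity from the fact that a combinatorial geodesic crosses each hyperplane once, whereas the paper already has it (for $n=1$, hence for all $n$ by iteration) from Lemma \ref{lem:regularcontracting}.
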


\begin{proof}
First, assume that $o\in I(\xi^-, \xi^+)\cap X$. By Lemma \ref{lem:regularcontracting} we have that 
%
$m(o,g^{-1}o,\xi^+) = o$, so that $h_{\xi^+}(g^{-1} o) = d(o,g^{-1}o)=\ell(g)$. Similarly we have $h_{\xi^+}(g^{-n} o) = n\ell(g)$ for every $n\geq 0$.

If $o\notin I(\xi^-, \xi^+)$ choose $o'\in I(\xi^-, \xi^+)$  and let $h'$ be the horofunction based at $o'$. Then we have $|h_{\xi^+}(y)-h'_{\xi^+}(y)|\leq d(o,o')$ for every $y\in X$. Since $h_{\xi^+}(g^{-n} o) = n\ell(g)$ we get  $|\frac{h'_{\xi^+}(g^{-n}o)} n-\ell(g)|\leq \frac{d(o,o')} n$. Taking the limit we get the result.
\end{proof}

\begin{proof}[Proof of Proposition~\ref{prop:non-degeneracy}]
By contradiction, assume that 
$$\int_{G\times \partial X} (h_\xi( g^{-1} o)+\psi(\xi)-\psi(g \xi) -\lambda)^2 d\mu(g) d\nu(\xi)=0$$

 This implies that for almost every $\xi \in \Supp(\nu)$ and $g\in \Supp(\mu)$ we have $h_{\xi}(g^{-1}o) - \lambda=\psi(g\xi)- \psi(\xi)$. Since $\psi$ is uniformly bounded, we get that  $|h_{\xi}(g^{-1}o) - \lambda|\leq C$ for some $C$. Fixing $g\in \Supp(\mu)$, the set of $\xi$ satisfying this inequality is closed and of full $\nu$-measure. As the support of $\nu$ is, by definition, the intersection of all closed full subsets of full measure, 
we get that this inequality holds for every $\xi\in \Supp(\nu)$. Since $\Supp(\mu)$ is countable, this inequality holds for all $\xi \in \Supp(\nu)$ and $g\in \Supp(\mu)$.

Now fix $n>0$. The random walk of law $\mu^{\ast n}$ has finite second moment, the same stationary measure as $\mu$, but has drift $n\lambda$ instead. Furthermore one can see a random walk of law $\mu^{\ast n}$ as a subsequence of a random walk of law $\mu$.  Applying Theorem~\ref{thm:gaussian}, we get that
$|h_{\xi}(g^{-1}o) - n\lambda|\leq C$ for every $\xi \in \Supp(\nu)$ and every $g\in \Supp(\mu^{\ast n})$, for every $n>0$.

Now let $g\in G$ be a regular isometry. Let $\xi^+$ (respectively $\xi^-$) be its attracting (respectively repelling) fixed point. Since the action of $G$ on $X$ is nonelementary, in particular, it is Roller nonelementary, that is, it has no finite orbits in $\~X$ (see Remark \ref{rem: basic action defs}) and so the stationary measure $\nu$ is non-atomic. Therefore by Lemma~\ref{lem:regularcontracting} for $\nu$-almost every point $\xi\in \partial X$ we have that $g^n \xi\to \xi^+$. This implies that $\xi^+\in \Supp (\nu)$.

By assumption, there exists $m>0$ such that $g\in \Supp(\mu^{\ast m})$. Hence for every $n>0$ we have $g^n\in \Supp(\mu^{\ast mn})$ and therefore $|h_{\xi^+}(g^{-n}o) - nm\lambda|\leq C$. Dividing by $n$ and taking the limit we get that $\ell(g)=m\lambda$ by Lemma \ref{Lem: horo to translation length}. 

Now, there exists $N>0$ such that $1\in \Supp(\mu^{\ast N})$. Therefore we also have $g\in \Supp(\mu^{\ast (m+N)})$. Hence the previous argument also proves that $\ell(g)=(m+N)\lambda$. Since $\lambda>0$ this is a contradiction.
\end{proof}

We can now conclude the proof of our main theorem.

\begin{proof}[Proof of Theorem~\ref{maintheorem}]
First, note that if we replace $o$ by some other vertex $o'\in X$ then $d(Z_no,o)-d(Z_no',o')$ is bounded and therefore the limit of $\frac{d(Z_n o,o)-n\lambda}{\sqrt n}$ does not depend on $o$. If the action of $G$ on $X$ were not essential, since the action is nonelementary then by \cite[Proposition 3.5]{CapraceSageev} there exists some $G$-invariant cubical subcomplex $X'\subset X$ on which the action of $G$ on $X'$ is essential and still nonelementary. Therefore choosing $o\in X'$ it suffices to prove the theorem for $X'$. In other words, we may and shall assume that the action of $G$ on $X$ is also essential.

Let us write $X=X_1\times \cdots \times X_m$ where each $X_i$ is irreducible and $\Aut(X)$ contains $\Aut(X_1)\times \cdots \times\Aut(X_m)$ as a finite index subgroup (Theorem \ref{Th: CapraceSageev Fundamentals}). Then $G$ has a finite index subgroup $G_0$ which preserves each factor. Since $G_0$ has finite index in $G$, it is recurrent, so that we can define the probability measure $\mu_0$ on $G_0$ as the law of the first time the random walk (starting at the identity) goes back to an element of $G_0$.  Using \cite[Lemma 2.3]{Kaima91} we see that $\mu_0$ has  finite second moment. Therefore, by Theorem~\ref{thm:gaussian}, the random walk of law $\mu_0$ satisfies a Central Limit Theorem. More precisely, if $(Y_n)$ is such a random walk on $G_0$, then the sequence $\left( \frac{d(Y_no,o)-n\lambda_0}{\sqrt n}\right)_{n\geq 1}$ converges to a Gaussian law of variance say $\Phi_0$.  By Proposition~\ref{prop:non-degeneracy} we also have $\Phi_0>0$.

Consider a coset $gG_0$ in $G/G_0$. Let $(Z_{\phi(n)})$ be the subsequence of $(Z_n)$ formed by the elements which are in $gG_0$. Then for every $n\geq 1$, $Z_{\phi(n)}$ has the same law as $g'Y_n$, where $g'$ is a random element (the first element of the random walk in the class $gG_0$), and $Y_n$ is the random walk on $G_0$ of law $\mu_0$. Furthermore, using the recurrence of $G_0$ in $G$ we have  that there exists a $C>0$ such that $\frac{\phi(n)}{n}$ converges to $C$, and it follows that $\lambda_0=C\lambda$ (see for example \cite[Lemma 9.7]{FLM}).

By the triangle inequality we have $d(Z_{\phi(n)}o,o)\leq d(Y_no,o)+d(g' o,o)$, and $\frac{d(g'o,o)}{\sqrt n}$ converges to $0$ in probability (since the law of $g'$ does not depend on $n$). So $\left( \frac{d(Z_{\phi(n)}o,o)-n\lambda_0}{\sqrt n}\right)_{n\geq 1}$ converges to a Gaussian law of variance $\Phi_0$. Hence $\left(\frac{d(Z_{\phi(n)} o,o) - \phi(n) \lambda}{\sqrt {\phi(n)}}\right)_{n\geq 1}$ converges to a Gaussian law of variance $\frac{\Phi_0}{\sqrt C}$.
Since this is valid for every class of $G/G_0$, we deduce that $\left( \frac{d(Y_no,o)-n\lambda}{\sqrt n}\right)_{n\geq 1}$ also converges to a Gaussian law of variance $\frac{\Phi_0}{\sqrt C}>0$.
\end{proof}

\bibliographystyle{alpha}
\bibliography{biblio}

\end{document}